\documentclass[11pt,a4paper]{article}

\usepackage{fullpage}
\usepackage{authblk}

\usepackage{colortbl}   
\usepackage{xcolor}     

\usepackage{tabularx}
\usepackage{threeparttable}
\usepackage{amsmath}
\usepackage{amssymb}
\usepackage{amsthm}
\usepackage{mathtools}
\usepackage{booktabs}
\usepackage{tikz}                      
\usetikzlibrary{positioning, arrows.meta, bending, calc} 
\usepackage{accents} 
\usepackage{orcidlink}
\usepackage{algorithm}
\usepackage{algpseudocode}
\usepackage{hyperref}

\definecolor{banana}{HTML}{ffbe0b}
\definecolor{mango}{HTML}{fb5607}
\definecolor{vividorchid}{HTML}{c11cad}
\definecolor{malinconicblu}{HTML}{8338ec}
\definecolor{skyblue}{HTML}{3a86ff}

\newcommand{\treecol}{vividorchid}

\newcommand{\ared}{vividorchid}
\newcommand{\bluecycle}{skyblue}

\newtheorem{theorem}{Theorem}[section]
\newtheorem{proposition}[theorem]{Proposition}
\newtheorem{corollary}[theorem]{Corollary}
\newtheorem{lemma}[theorem]{Lemma}
\theoremstyle{definition}
\newtheorem{definition}[theorem]{Definition}
\newtheorem{example}[theorem]{Example}
\theoremstyle{remark}

\newtheorem{observation}[theorem]{Observation}
\newcommand{\nc}{\newcommand}

\nc{\tm}[1]{\text{\(#1\)}}

\nc{\bR}{\mathbb{R}}
\nc{\cK}{\mathcal{K}}
\nc{\cB}{\mathcal{B}}
\nc{\bZ}{\mathbb{Z}}

\nc{\nullv}{\mathbf{0}}
\nc{\card}[1]{\tm{\left|#1\right|}}

\nc{\cyclematrix}{\Gamma}
\nc{\forbiddenminors}{\mathcal{X}}
\nc{\odigraph}{\overline{\digraph}}
\nc{\oweight}{\overline\weight}
\nc{\otree}{\overline\tree}
\newcommand{\quotient}[2]{{\raisebox{.2em}{$#1$}\left/\raisebox{-.2em}{$#2$}\right.}}
\nc{\ocyclebasis}{\overline\cyclebasis}

\nc{\graph}{\tm{G}}
\nc{\digraph}{\tm{D}}
\nc{\vertices}[1]{\tm{V(#1)}}
\nc{\Arcs}[1]{A(#1)}
\nc{\Dvertices}{\vertices{\digraph}}
\nc{\Darcs}{\Arcs{\digraph}}
\nc{\Arc}{e}
\nc{\Arcp}{\Arc'}
\nc{\Arco}{\overline{\Arc}}
\nc{\outn}[1]{\text{\(\delta^+(#1)\)}}
\nc{\inn}[1]{\text{\(\delta^-(#1)\)}}
\nc{\eardecomp}{\mathcal{E}}
\nc{\walk}{P}
\nc{\walkk}{Q}
\nc{\walkh}{\walkk}
\nc{\ear}{P}
\nc{\subgraph}{\digraph}
\nc{\subbgraph}{\overline{\digraph}}
\nc{\block}{B}
\nc{\blocks}{\mathbf{B}}
\nc{\strongcomponent}{\mathcal{D}}
\newcommand{\vertex}{v}
\nc{\cT}{\mathcal{T}}
\nc{\tree}{\cT}
\nc{\treec}{\cT^*}
\nc{\startree}{S}

\nc{\cyclespace}[2]{\mathcal{C}_{#2}(#1)}
\nc{\Dcyclespace}{\cyclespace{\digraph}{\bR}}
\nc{\field}{\cK}
\nc{\cycle}{C}
\nc{\cyclebasis}{\tm{\cB}}
\nc{\cycleo}{\overline{\cycle}}
\nc{\circuit}{\widehat{\cycle}}
\nc{\forwardcycles}{\mathcal{F}}
\newcommand{\simplecycle}[1]{S^{(#1)}}
\newcommand{\weight}{w}
\nc{\w}{w}
\nc{\tweight}{\widetilde{\weight}}
\nc{\cweight}{\lambda}

\newcommand{\treecycleT}[1]{\cycle_{\tree}^{(#1)}}
\nc{\eincycles}[1]{\mathcal{H}(#1)}
\nc{\stcyclebasis}{\cyclebasis^*}
\nc{\stcycle}{\cycle^*}

\newcommand{\bigo}[1]{\mathcal{O}\left(#1\right)}
\nc{\ArcC}{e^{(\cycle)}}
\nc{\RarcC}[2]{#1^{(#2)}}
\nc{\simplecycles}{\mathcal{S}}
 
\newcommand*\Rop[1]{\mkern2mu\accentset{\leftrightarrow}{#1}\mkern2mu}%
\nc{\Rdigraph}{\Rop{\digraph}}
\nc{\Rvertices}{V}
\nc{\Rarcs}{A'}
\nc{\Rcycle}{\Rop{\cycle}}
\nc{\Rarc}{\Rop{\Arc}}
\nc{\Rcyclebasis}{\Rop{\cyclebasis}}

\nc{\head}{\vertex}
\nc{\tail}{u}
\newcommand{\cyclerank}[1]{\tm{\mu(#1)}}
\newcommand{\Dcyclerank}{\tm{\mu}}
\newcommand{\Rcyclerank}{\tm{\Rop{\mu}}}
\newcommand{\optcost}[2]{\mathrm{OPT}^{#1}(#2)}
\newcommand{\MWFCB}[1]{\optcost{\mathrm{wF}}{#1}}
\newcommand{\MWFFCB}[1]{\optcost{\mathrm{wF,fwd}}{#1}}
\nc{\projection}{\pi}

\newcommand{\minimalcyclesk}{\mathfrak{C}_k}
\newcommand{\proj}{\mathrm{proj}}

\newcommand{\completegraph}[1]{K_{#1}}
\nc{\kfive}{\completegraph{5}}
\nc{\mcbeq}{opt-in}
\nc{\notmcbeq}{opt-out}
\nc{\optbases}{\mathfrak B_O}
\nc{\optintbases}{\mathfrak B_I}
\nc{\optnotintbases}{\mathfrak B_N}
\nc{\intcone}{\mathcal Q_I}

\nc{\aut}{\mathrm{Aut}}
\nc{\opennodes}{\mathcal O}

\nc{\cervec}{b}
\nc{\Cervec}{\mathcal{V}}
\nc{\domvec}{\cervec}

\nc{\tri}{T}

\DeclareMathOperator*{\argmin}{arg\,min}
\DeclareMathOperator*{\supp}{supp}
\DeclareMathOperator*{\Span}{Span}

\begin{document}

\title{Characterizations and Complexity of Minimum Forward and Integer Cycle Bases}

\author[1]{Gabor Riccardi \orcidlink{0009-0009-1616-6359}}
\author[2]{Niels Lindner \orcidlink{0000-0002-8337-4387]}}
\affil[1]{Dipartimento di Matematica ``F. Casorati'', University of Pavia, Via Adolfo Ferrata, 5, 27100, Pavia, Italy}
\affil[2]{Department of Mathematics and Computer Science, Freie Universität Berlin, c/o~Zuse~Institute Berlin, Takustr.\ 7, 14195, Berlin, Germany}



\maketitle

\begin{abstract}
The cycle space of a directed graph is generated by a cycle basis, where, in general, cycles are allowed to have both forward and backward arcs. In a forward cycle, all arcs must follow the given direction. Several open questions remain regarding the complexity of the minimum cycle basis problem for various families of cycle bases, in particular the minimum-weight integral cycle basis problem, and the minimum-weight weakly and strictly fundamental forward cycle basis problems. In this paper, we address these open questions.

First, we study the existence, structure, and computational complexity of minimum-weight forward cycle bases in directed graphs. We give a complete structural characterization of digraphs that admit weakly fundamental (and hence integral) forward cycle bases, showing that this holds if and only if every block is either strongly connected or a single arc. We further provide an easily verifiable characterization of when a strongly connected digraph admits a forward fundamental cycle basis, proving that such a basis exists if and only if the set of directed cycles has cardinality equal to the cycle rank; in this case, the basis is unique and computable in polynomial time, and nonexistence can likewise be certified efficiently. Lastly, we show that while minimum-weight forward fundamental cycle bases can be found in polynomial time whenever they exist, the minimum-weight forward weakly fundamental cycle basis problem is APX-hard via an L-reduction from the minimum-weight weakly fundamental cycle basis problem on digraphs with metric weights.

Second, we introduce the notion of \mcbeq\ graphs: the family of graphs for which minimum cycle bases are integral for any weight function. We show that the family of \mcbeq\ graphs is minor-closed and hence, by the Robertson--Seymour theorem, is characterized by a finite set of forbidden minors, yielding a non-constructive proof that the \mcbeq\ recognition problem is solvable in polynomial time. Lastly, we present an algorithm to check whether a graph is \mcbeq, and if not, to identify which of its minors belong to the set of forbidden minors. Applying this algorithm, we show that the complete graph \(\completegraph n\) is \mcbeq\ if and only if \(n\le7\).
\end{abstract}

\vspace{\baselineskip}
\noindent
\emph{Keywords:} cycle bases, forward cycle bases, fundamental cycle bases, weakly fundamental cycle bases, integer cycle bases, minimum cycle basis, complexity

\section{Introduction}

Cycle bases provide a compact representation of all cycles in a graph and play a key role in optimization problems such as periodic scheduling, electrical network analysis, and modeling of chemical and biological pathways (see, e.g., Kavitha et al.\ \cite{cyclebasisintro}). Different optimization contexts call for different classes of cycle bases. For instance, aiming at railway timetable optimization, Nachtigall \cite{Nachtigall1996CuttingPlanes} introduced a mixed-integer programming formulation of the \emph{periodic event scheduling problem} (PESP) based on fundamental cycle bases, which was later extended to integral cycle bases by Liebchen and Peters \cite{LIEBCHEN200998} and to forward cycle bases by Lindner et al.\ \cite{lindner} and Masing et al.\ \cite{masing_forward_2023}. The choice of cycle basis can significantly influence computational efficiency, motivating the study of minimum-weight cycle basis problems for each class of cycle bases.

While existence and complexity results are well established for many classes of cycle bases \cite{BangJensenGutin2009}, an exception is the computational complexity of the minimum-weight integral cycle basis problem, which remains open. Forward cycle bases are less well understood. In \cite{forwardGleiss}, Gleiss, Leydold, and Stadler show that a digraph admits a forward cycle basis if and only if each block is either strongly connected or a single arc, and that a minimum-weight forward cycle basis can be computed in polynomial time. Masing et al.\ \cite{masing_forward_2023} show that forward cycle bases can improve the efficiency of branch-and-cut in integral cycle basis formulations of PESP. They also prove that any graph admits an orientation with a strictly fundamental forward cycle basis, and that a forward integral cycle basis always exists for so-called line-based event-activity networks constructed from a public transport line plan.

Several open questions remain regarding the structure and complexity of both integral and forward cycle bases. These include identifying structural graph properties that guarantee the existence of integral, weakly fundamental, and strictly fundamental forward cycle bases, establishing the computational complexity of deciding their existence, and addressing the optimization challenge of computing a minimum-weight basis within a given class. For instance, while the minimum integral cycle basis problem can be solved on planar graphs using Horton's algorithm over isometric cycles \cite{cyclebasisintro}, the broader graph classes for which this algorithmic approach remains valid are unknown. Delineating these structural boundaries represents a step toward settling the exact computational complexity of the problem. 
Finally, from an applied perspective, when a strict forward cycle basis cannot be guaranteed, a new optimization problem arises: determining a cycle basis that maximizes the total number of forward cycles.

The paper is organized as follows. In
Section~\ref{sec:preliminaries}, we introduce basic notions and
definitions related to directed graphs, cycle spaces, and various classes
of cycle bases. Section~\ref{sec:existence} presents results on the
existence of forward cycle bases and characterizations of weakly
fundamental and fundamental forward cycle bases. In
Section~\ref{sec:APX-hardness}, we establish the APX-hardness of the minimum-weight forward weakly fundamental cycle basis problem via an L-reduction from the (non-forward) minimum-weight weakly fundamental cycle basis problem with metric weights. In
Section~\ref{sec:integrality of minimum cycle bases}, we introduce
\mcbeq\ graphs -- those for which classical minimum cycle basis
algorithms return an integral cycle basis for any weight function -- and
show that this family is minor-closed and hence fully characterized by a
finite set of forbidden minors due to the Robertson--Seymour theorem
\cite{ROBERTSON1995}. Section~\ref{sec: results for X} establishes
several key structural properties of the set of forbidden minors \(\forbiddenminors\), and
Section~\ref{sec: forbidden minors alg} presents a recursive algorithm to
determine membership in \(\forbiddenminors\), which we use to fully
characterize \mcbeq-ness for complete graphs.
Finally, Section~\ref{sec:conclusion} summarizes our contributions and outlines open questions for future research.

This paper extends a previous conference paper \cite{DBLP:conf/inoc/RiccardiL26}. The APX-hardness result in Section~\ref{sec:APX-hardness} and all content in Sections~\ref{sec:integrality of minimum cycle bases}-\ref{sec: forbidden minors alg} are new.

\section{Preliminaries and basic notions}\label{sec:preliminaries}
Let $\digraph$ be a finite directed graph (digraph), where $\Dvertices$ is the vertex set and $\Darcs\subseteq \Dvertices\times\Dvertices$ is the arc set. 
An \emph{(undirected) walk} in $\digraph$ is a sequence $\walk = \vertex_0 \Arc_1 \vertex_1 \Arc_2 \dots \Arc_k \vertex_k$ such that for each $i=1,\dots,k$ the arc $\Arc_i$ is either $(\vertex_{i-1},\vertex_i)\in\Darcs$ or $(\vertex_i,\vertex_{i-1})\in\Darcs$ (i.e. the arc may be traversed in either direction). 
If every $\Arc_i=(\vertex_{i-1},\vertex_i)\in\Darcs$ we say $\walk$ is a \emph{directed walk}. 
The vertices $\vertex_0$ and $\vertex_k$ are the \emph{start} and \emph{end} of $\walk$, denoted $\mathrm{start}(\walk)$ and $\mathrm{end}(\walk)$. 
If $\mathrm{end}(\walk)=\mathrm{start}(\walkk)$, we denote by $\walk\walkk$ the \emph{concatenation} of walks $\walk$ and $\walkk$. 
A \emph{(directed) path} is a (directed) walk with all vertices distinct. 
A \emph{(directed) circuit} is a closed (directed) walk (i.e. $\mathrm{start}(\walk)=\mathrm{end}(\walk)$), and a \emph{(directed) cycle} is a circuit in which all vertices except the first and last are distinct. Whenever convenient, we identify a path with the ordered sequence of its arcs or vertices, rather than with its formal representation.
A digraph is \emph{strongly connected} if every ordered pair of vertices is joined by a directed path, and \emph{weakly connected} if there is a (possibly non-directed) path between every pair of vertices.

For a forest $\tree\subset\digraph$, i.e., a subgraph that does not contain any cycle, we write
\[
\treec \coloneqq \Darcs\setminus \Arcs{\tree}
\]
for the set of chords (arcs not in~$\tree$). If $\vertices{\tree} = \vertices{\digraph}$, then $\tree$ is \emph{spanning}. A weakly connected forest is a \emph{tree}.

If $\field$ is a field, $\cycle\in\field^{\Darcs}$ and $\Arc\in\Darcs$, then $\cycle(\Arc)$ denotes the coordinate of $\cycle$ corresponding to the arc $\Arc$. 
The \emph{support} of a vector $\cycle\in\field^{\Darcs}$ is denoted
\[
\supp(\cycle):=\{\Arc\in\Darcs:\ \cycle(\Arc)\neq 0\}.
\]
We often identify the vector \(\cycle\) with the set $\supp(\cycle)$ and write ``$\Arc\in\cycle$\,'' to mean $\Arc\in\supp(\cycle)$ (equivalently $\cycle(\Arc)\neq 0$). In particular, we interpret a cycle $\cycle$ in $\digraph$ as a vector in $\field^{\Darcs}$ that we also call $\cycle$ as follows: If $\cycle = \vertex_0 \Arc_1 \vertex_1 \Arc_2 \dots \Arc_k \vertex_k$ with $\vertex_0 = \vertex_k$, the \emph{incidence vector} is given by 
\[ 
\cycle(e) \coloneqq \begin{cases}
    1 & \text{ if } e = e_i \text{ for some } i \in \{1, \dots, k\} \text{ and } e_i = (v_{i-1}, v_i),\\
    -1 & \text{ if } e = e_i \text{ for some } i \in \{1, \dots, k\} \text{ and } e_i = (v_i, v_{i-1}),\\
    0 & \text{ otherwise.}
\end{cases}
\]

\begin{definition}[cycle space]
The \emph{cycle space} of \(\digraph\) over a field \(\field\) is the subspace of \(\field^{\Darcs}\) defined by:
\begin{equation}
    \cyclespace{\digraph}{\field} \coloneqq \left\{\cycle \in \field^{\Darcs} \,\middle|\, \sum _{\Arc \in \outn{v} } \cycle(\Arc) = \sum_{\Arc \in \inn{v}} \cycle(\Arc) \right\}.
\end{equation}
\end{definition}
The dimension of the cycle space of \(\digraph\) is called \emph{cycle rank} (cyclomatic number, cyclotomic number) and is denoted by \cyclerank{\digraph}. By means of its incidence vector, any cycle in $\digraph$ can be seen as an element of the cycle space.

\begin{definition}[classes of cycle bases]\label{def:cycle-basis-classes}
    A \emph{cycle basis} \(\cyclebasis\) of \(\digraph\) is a basis of $\cyclespace{\digraph}{\field}$ composed of incidence vectors of cycles in \(\digraph\). Moreover, we call \(\cyclebasis\):
    \begin{enumerate}
        \item \emph{directed}: if \(\cyclebasis\) is a cycle basis of \(\cyclespace{\digraph}{\bR}\).
        \item \emph{undirected}: 
        if \(\cyclebasis\) is a basis of \(\cyclespace{\digraph}{\mathbb F_2}\), where \(\mathbb F_2\) is the finite field with two elements.
        \item \emph{integral}: if any cycle \(\cycle\) of \(\digraph\) can be written as an integer linear combination of vectors in \(\cyclebasis\).
        \item \emph{weakly fundamental} (Whitney \cite{whitney_weakly_1935}) : if there exists an ordering of vectors in \(\cyclebasis = \{\cycle_1,\ldots, \cycle_{\mu}\}\) such that for all \(1\leq i \leq \mu\) there exists \(\Arc\in \cycle_i\) such that \(\Arc\notin \bigcup_{j <i}\cycle_j\).
        \item \emph{(strictly) fundamental:} if there exists a spanning forest \(\tree\) in \(\digraph\) such that \(\cyclebasis = \{\treecycleT{\Arc}\mid \Arc\in \treec\)\}, where \(\treecycleT{\Arc}\) is the only cycle contained in \(\tree\cup\{\Arc\}\).
    \end{enumerate}
\end{definition}

For an in-depth introduction to cycle spaces and bases, we refer to \cite{cyclebasisintro}, from which we adopt the same notation. 
Definition~\ref{def:cycle-basis-classes} in fact provides a hierarchy: Any fundamental cycle basis is weakly fundamental, any weakly fundamental cycle basis is integral, any integral cycle basis is undirected, and any undirected cycle basis is directed \cite{cyclebasisintro}. 




For $\cycle \in \Dcyclespace$ and $e \in \supp(\cycle)$, we can either have $\cycle(e) > 0$ (``$e$ is forward'') or $\cycle(e) < 0$ (``$e$ is backward''). We write $\cycle \geq \nullv$ if $\cycle(e) > 0$ for all $e \in \supp(C)$, i.e., if $\cycle$ contains only forward arcs. In particular, this holds for any directed cycle. We further define
    \[ \forwardcycles(\digraph) \coloneqq \{ \cycle \mid \cycle \text{ is a directed cycle in } \digraph \}. \]

\begin{definition}[forward cycle basis]
    A cycle basis $\cyclebasis$ of \(\cyclespace{\bR}{\digraph}\) is a \emph{forward cycle basis} if all cycles \(\cycle\) in \(\cyclebasis\) are directed.
\end{definition}

We prefer the term \emph{forward}, since ``directed cycle basis'' is used in \cite{cyclebasisintro} and in Definition~\ref{def:cycle-basis-classes} for an arbitrary cycle basis of $\cyclespace{\bR}{\digraph}$, although the latter need not be composed of directed cycles.
In the same way as arbitrary cycle bases, forward cycle bases can be categorized as directed, undirected, integral, weakly fundamental, or fundamental.

Lastly, the notion of separability is essential to characterize digraphs that admit a forward cycle basis:

\begin{definition}[separable digraph]
    A digraph \(\digraph\) is \emph{separable} if there exists a vertex \(\vertex \in \Dvertices\) such that the digraph \(D \setminus \{\vertex\}\) is not weakly connected.
\end{definition}

If a graph is separable, its maximal non-separable subgraphs form a partition and are called \emph{blocks} of \(\digraph\). We briefly note that the cycle space of a digraph naturally decomposes into the direct sum of the cycle spaces of its blocks:

\begin{observation}\label{obs:blocks}
    Let $\blocks$ be the set of blocks of a digraph $\digraph$. Then, for any field $\field$,
    \[ \cyclespace{\digraph}{\field} = \bigoplus_{\block \in \blocks} \cyclespace{\block}{\field}. \]
\end{observation}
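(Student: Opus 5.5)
The plan is to prove the decomposition in two steps: first that the sum of the block cycle spaces is direct, then that it exhausts $\cyclespace{\digraph}{\field}$. Throughout, each block $\block$ is viewed as a subgraph with $\Arcs{\block}\subseteq\Darcs$. Extending a vector of $\field^{\Arcs{\block}}$ by zero on $\Darcs\setminus\Arcs{\block}$ embeds $\cyclespace{\block}{\field}$ into $\field^{\Darcs}$. This embedding lands inside $\cyclespace{\digraph}{\field}$: at each vertex $\vertex$ of $\block$ the flow conservation condition involves only arcs of $\block$ plus arcs with value zero, and at vertices outside $\block$ every incident arc has value zero.

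For directness, I will use the fact that the blocks partition the arc set $\Darcs$. A loop or a bridge forms a block by itself, and every other arc lies in exactly one maximal non-separable subgraph. The embedded subspaces therefore have pairwise disjoint supports. A relation $\sum_{\block}\cycle_\block=\nullv$ can then be read off coordinatewise: each coordinate $\Arc$ receives a contribution from only one block, so every $\cycle_\block=\nullv$.

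For spanning, I take an arbitrary $\cycle\in\cyclespace{\digraph}{\field}$ and define $\cycle_\block$ as its restriction to $\Arcs{\block}$, so that $\cycle=\sum_\block\cycle_\block$. It remains to check that each $\cycle_\block$ satisfies conservation at every vertex $\vertex$ of $\block$.

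\begin{itemize}
\item If $\vertex$ is not a cut vertex, all arcs at $\vertex$ lie in $\block$. Conservation of $\cycle_\block$ at $\vertex$ is then exactly conservation of $\cycle$ at $\vertex$.
\item If $\vertex$ is a cut vertex, this is the main obstacle, because there $\cycle$ only balances the total over all blocks meeting $\vertex$.
\end{itemize}

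I see two ways to handle the cut-vertex case.

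\textbf{Cut argument (preferred).} Take the weak component $K$ of $\digraph\setminus\{\vertex\}$ that contains $\block\setminus\{\vertex\}$. Sum the conservation equations of $\cycle$ over all vertices of $K$. Every arc with both endpoints in $K$ appears once with $+$ and once with $-$, so it cancels. What remains says that the net flow of $\cycle$ across the cut between $K$ and $\vertex$ is zero. By the block structure, the arcs between $\vertex$ and $K$ are exactly the arcs of the blocks at $\vertex$ lying on the $K$ side. If $\block$ is the only such block, this net-flow equation is precisely conservation of $\cycle_\block$ at $\vertex$. Otherwise I refine by induction on the block–cut tree, peeling off leaf blocks, which contain at most one cut vertex.

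\textbf{Dimension count (fallback).} Alternatively, it suffices to compare dimensions. Since each $\block$ is connected, $\cyclerank{\block}=|\Arcs{\block}|-|\vertices{\block}|+1$. Summing over blocks and using that the block–cut tree is a tree gives $\sum_\block(|\vertices{\block}|-1)=|\Dvertices|-c$, where $c$ is the number of weak components of $\digraph$. Hence $\sum_\block\cyclerank{\block}=|\Darcs|-|\Dvertices|+c=\cyclerank{\digraph}$. Together with directness, this proves equality over any field.
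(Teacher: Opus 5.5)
Your proposal is correct. The paper gives no proof of this observation; it only asserts it as a standard fact (``we briefly note''). So there is no argument in the paper to compare against, and yours supplies what the paper omits. Both of your routes for the spanning part work.

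Two remarks. First, the case split at the end of your cut argument never arises, because exactly one block containing $v$ has its other vertices in $K$. Suppose two distinct blocks $B_1,B_2$ at $v$ both had $V(B_i)\setminus\{v\}\subseteq V(K)$. Take a shortest path in $K$ from $V(B_1)\setminus\{v\}$ to $V(B_2)\setminus\{v\}$, and close it up with a path to $v$ inside $B_1$ and one inside $B_2$. These two paths share only $v$, since distinct blocks meet in at most one vertex. The result is a cycle containing arcs of both blocks, which is impossible because every cycle lies in a single block. Hence the net-flow identity across the cut between $K$ and $v$ is always exactly conservation of $C|_{A(B)}$ at $v$. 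The leaf-peeling induction would also work, but it is not needed.

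Second, comparing your two routes. The dimension count is the shortest. It needs only three inputs:
\begin{itemize}
\item the disjoint supports;
\item the block--cut tree count;
\item the fact that $\dim \mathcal{C}_{\mathcal K}(D)=|A(D)|-|V(D)|+c$ over every field, since the incidence matrix has rank $|V(D)|-c$ in any characteristic.
\end{itemize}
Because the block supports partition $A(D)$, it also shows implicitly that the components of any $C$ are exactly its restrictions to the blocks. So it yields the same explicit decomposition as your local argument. The cut argument reaches that decomposition without any rank or counting input, at the price of the block-structure fact above.
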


In particular, a directed/undirected/integral/weakly fundamental/fundamental cycle basis for $\digraph$ can be constructed by assembling directed/undirected/integral/weakly fundamental/fundamental cycle bases for each block.

To answer the question regarding the existence of forward weak\-ly fundamental and integral cycle bases, we will use the concept of ear decomposition of a directed graph. A (directed) ear \(\ear \subset \digraph\) of a subgraph \(\subbgraph \) of \( \digraph \) is a (directed) path or cycle in \(\digraph\) such that only the first and last vertex of \(\ear\) are in \(\subbgraph\) while all the interior vertices of \(\ear\) are not in \(\subbgraph\).

\begin{definition}[(directed) ear decomposition]
    A \emph{(directed) ear decomposition} of a digraph \(\digraph \) with at least two vertices is a sequence of nested subgraphs \(\subgraph_1\subset\ldots\subset\subgraph_t = \digraph\) such that
    \begin{enumerate}
        \item \(\subgraph_1\) is a (directed) cycle,
        \item \(\subgraph_i = \subgraph_{i-1} \cup \ear_i\), where \(\ear_i\) is a (directed) ear of \(\subgraph_{i-1}\) for \(1 < i \leq t\).
    \end{enumerate}
\end{definition}

It is a known result in graph theory that a digraph admits a directed ear decomposition if and only if it is strongly connected.

\begin{proposition}[\cite{BangJensenGutin2009}]\label{prop:directed-ear}
A strongly connected digraph \(\digraph\) admits a directed ear decomposition \(\subgraph_1\subset\ldots\subset\subgraph_t = \digraph\). Furthermore,  \(t = \mu(D) = |\Arcs{D}| -  |\Dvertices|+1\) and the ear decomposition can be computed in \(\bigo{\min(\card{\Dvertices},\mu(\digraph)) \cdot (\card{\Dvertices}+\card{\Darcs})}\)
 time. 
\end{proposition}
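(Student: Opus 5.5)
The plan is to prove the three claims in order: existence, the count \(t=\mu(\digraph)\), and the running time.

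\emph{Existence.} Assume \(\digraph\) is strongly connected. If it has at least two vertices, it contains a directed cycle. Indeed, pick an arc \((u,v)\) and a directed path from \(v\) back to \(u\); together they form a closed directed walk, which contains a directed cycle. Let \(\subgraph_1\) be such a cycle. Now suppose \(\subgraph_{i-1}\neq\digraph\) has been built, and by construction is strongly connected. There are two cases.
\begin{enumerate}
\item Some arc \((a,b)\) of \(\digraph\) is not in \(\subgraph_{i-1}\) but both endpoints are. Then this single arc is a directed ear, namely a path with no interior vertices.
\item Otherwise there is a vertex outside \(\subgraph_{i-1}\). Weak connectivity and the previous case give an arc leaving or entering \(\subgraph_{i-1}\). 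Say \((a,x)\) with \(a\in\subgraph_{i-1}\) and \(x\notin\subgraph_{i-1}\); the entering case is symmetric. Strong connectivity gives a directed path from \(x\) back to some vertex of \(\subgraph_{i-1}\). Let \(Q\) be this path truncated at its first vertex \(b\) in \(\subgraph_{i-1}\). Then \((a,x)Q\) is a directed path whose interior avoids \(\subgraph_{i-1}\), or a directed cycle if \(b=a\), so it is a directed ear.
\end{enumerate}
In both cases \(\subgraph_i\) remains strongly connected. This holds because every new interior vertex reaches \(b\), is reached from \(a\), and \(\subgraph_{i-1}\) is strongly connected. The process terminates since each step adds at least one arc.

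\emph{Counting.} I would track \(|\Arcs{\subgraph_i}|-|\vertices{\subgraph_i}|\). For the cycle \(\subgraph_1\) this quantity is \(0\). An ear with \(k\) arcs adds \(k\) arcs and \(k-1\) new interior vertices, so the quantity increases by exactly one per ear. Hence at the end \(|\Darcs|-|\Dvertices|=t-1\), that is \(t=\mu(\digraph)\), using the standard formula \(\mu=|A|-|V|+1\) for weakly connected graphs.

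\emph{Running time.} This is the only delicate part. A naive bound is \(t\) ears, each found by one BFS/DFS in \(\bigo{|V|+|A|}\). This gives \(\bigo{\mu\cdot(|V|+|A|)}\), which is one half of the minimum.

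For the \(|V|\) half, the key is to handle the single-arc ears of case 1 without any search. These are exactly the arcs between already-covered vertices, and they can all be appended as soon as both endpoints are covered, in total time \(\bigo{|A|}\). The ears that require a search each introduce at least one new vertex, so there are at most \(|V|\) of them. Each costs one graph search in \(\bigo{|V|+|A|}\). This gives \(\bigo{|V|\cdot(|V|+|A|)}\), and combined with the naive bound yields \(\bigo{\min(|V|,\mu)\cdot(|V|+|A|)}\).

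The main obstacle is making the ordering of arc-ears precise so that the sequence remains a valid ear decomposition. I would append all pending chord arcs immediately after each path ear is added. Alternatively, I would simply cite the standard linear-time DFS-based construction from \cite{BangJensenGutin2009}, which already gives a stronger bound.
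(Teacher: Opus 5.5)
Your proof is correct and is essentially the standard argument behind the cited result; the paper gives no proof of its own and only refers to \cite{BangJensenGutin2009}. You grow a strongly connected subgraph from a directed cycle by attaching either a chord arc or a path ear (a leaving arc followed by a shortest return path), count ears via the invariant $|A|-|V|$, and bound the number of searches by both the number of ears and the number of new vertices. The ordering issue you flag is harmless, since a chord ear stays valid wherever it is inserted once both its endpoints are covered, so you do not need the fallback citation.
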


\section{Results on the existence of forward cycle bases}\label{sec:existence}
In this section, we answer questions regarding the existence of integral, weakly fundamental, and strictly fundamental forward cycle bases.
\begin{proposition}\label{prop: weakly fundamental forward cycle basis}
{Let \(\digraph\) be a digraph. The following are equivalent:
\begin{enumerate}\label{prop: existence of weakly fundamenta forward cycle basis}
    \item\label{cond: exist wff} \(\digraph\) admits a forward weakly fundamental cycle basis.
    \item\label{cond: exist if} \(\digraph\) admits a forward integral cycle basis.
    \item\label{cond: exist u} \(\digraph\) admits a forward undirected cycle basis.
    \item\label{cond: exist f} \(\digraph\) admits a forward directed cycle basis.
    \item\label{cond: block}  Each block of \(\digraph\) is strongly connected or a single arc.
\end{enumerate}
}
\end{proposition}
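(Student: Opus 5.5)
The plan is to close the cycle of implications \ref{cond: exist wff} \(\Rightarrow\) \ref{cond: exist if} \(\Rightarrow\) \ref{cond: exist u} \(\Rightarrow\) \ref{cond: exist f} \(\Rightarrow\) \ref{cond: block} \(\Rightarrow\) \ref{cond: exist wff}.

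\textbf{The first three implications.} These are immediate from the hierarchy of cycle basis classes stated after Definition~\ref{def:cycle-basis-classes}. Every weakly fundamental basis is integral, every integral basis is undirected, and every undirected basis is directed. Each of these inclusions preserves the property that all cycles are directed.

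\textbf{Forward basis implies the block condition.} I would cite Gleiss, Leydold and Stadler \cite{forwardGleiss}, but a direct argument is short, so I would include it. By Observation~\ref{obs:blocks}, the cycle space splits over blocks, and a directed cycle lies in a single block. Hence the cycles of a forward basis lying in a block \(\block\) form a forward basis of \(\cyclespace{\block}{\bR}\).

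Suppose \(\block\) is a block that is not a single arc. Then \(\block\) is \(2\)-connected, so every arc of \(\block\) lies on some cycle, which means every arc lies in the support of some element of \(\cyclespace{\block}{\bR}\). That element is a combination of basis cycles, so every arc of \(\block\) lies on some directed cycle of \(\block\).

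A weakly connected digraph in which every arc lies on a directed cycle is strongly connected. To see this, follow any undirected path and replace each arc traversed backwards by the remaining part of a directed cycle through that arc. Hence \(\block\) is strongly connected.

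\textbf{Block condition implies a weakly fundamental forward basis.} This is the constructive step. By Observation~\ref{obs:blocks}, it suffices to build a forward weakly fundamental basis for each block; single-arc blocks contribute nothing.

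For a strongly connected block, take a directed ear decomposition \(\subgraph_1\subset\dots\subset\subgraph_t\) from Proposition~\ref{prop:directed-ear}, where \(t=\mu(\block)\). Let \(\cycle_1=\subgraph_1\). For \(i>1\), the ear \(\ear_i\) is either a directed cycle attached at one vertex, or a directed path from \(x\) to \(y\) with \(x\neq y\). In the second case, \(\subgraph_{i-1}\) is strongly connected, being built from directed ears, so it contains a directed path \(Q\) from \(y\) to \(x\). We then set \(\cycle_i=\ear_i Q\), which is a directed cycle because \(Q\) stays inside \(\subgraph_{i-1}\) and meets \(\ear_i\) only at its endpoints.

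Each \(\cycle_i\) contains the arcs of \(\ear_i\), and these arcs do not appear in any \(\cycle_j\) with \(j<i\). So the ordering \(\cycle_1,\dots,\cycle_t\) satisfies the weakly fundamental condition. Weakly fundamental cycles are linearly independent: expand a vanishing combination and look at the new arc of the cycle with the largest index to see its coefficient is zero. Since there are \(t=\mu(\block)\) of them, they form a basis.

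\textbf{Main obstacle.} I expect the trickiest step to be the argument above that every arc of a non-trivial block lies on a directed cycle. In particular, it needs care to justify that a forward basis restricted to a block is still a basis of that block's cycle space. The construction in the last step is routine once Proposition~\ref{prop:directed-ear} is available.
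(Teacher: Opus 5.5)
Your proof is correct. For the chain (1)$\Rightarrow$(2)$\Rightarrow$(3)$\Rightarrow$(4) and for the construction (5)$\Rightarrow$(1) via a directed ear decomposition, it is the same as the paper's.

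The one genuine difference is (4)$\Rightarrow$(5). The paper simply cites Theorem~7 of \cite{forwardGleiss} for the equivalence of (4) and (5). You instead give a short self-contained argument: every arc of a block that is not a single arc lies on some cycle, hence in the support of some forward basis cycle, hence on a directed cycle. A weakly connected digraph in which every arc lies on a directed cycle is then strongly connected. This argument is sound, and it makes the proposition independent of the external reference at the cost of a few lines.

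The step you flag as the main obstacle is easier than you fear. You never need the basis cycles lying in $B$ to form a basis of the cycle space of $B$. It suffices that a cycle through an arc $e$ is a linear combination of the forward basis, so some basis cycle has $e$ in its support. Any cycle containing $e$ automatically lies in the block of $e$.

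Finally, your ear step is slightly more careful than the paper's. You treat closed ears separately. You also close each open ear with a directed \emph{path} $Q$ rather than a directed walk, which is what is actually needed for the concatenation to be a cycle.
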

\begin{proof}
The equivalence of \eqref{cond: exist f} and \eqref{cond: block} is Theorem~7 in \cite{forwardGleiss}. The implications \eqref{cond: exist wff} $\Rightarrow$\eqref{cond: exist if} $\Rightarrow$ \eqref{cond: exist u} $\Rightarrow$ \eqref{cond: exist f} are clear.
Now assume that \eqref{cond: block} holds. 
By Observation~\ref{obs:blocks}, if there exists a weakly fundamental cycle basis for each block of $\digraph$, a forward weakly fundamental cycle basis can be naturally obtained for \(\digraph\). Therefore, we only need to show that a weakly fundamental cycle basis exists for a block $\block$ of $\digraph$. The case of a single arc is trivial. Otherwise, \(\block\) is strongly connected. Therefore it admits a directed ear decomposition \(\subgraph_1 \subset \ldots\subset \subgraph_t\) by Proposition~\ref{prop:directed-ear}. 

From this ear decomposition, we can construct a weakly fundamental cycle basis as follows: Let \(\cycle_1 \coloneqq \subgraph_1\). 
For \(i > 1\), consider the ear \(\ear_i\) and let \(\vertex,\vertex' \in \vertices{\ear_i}\) be the first and last vertex of \(\ear_i\).
Since \(\subgraph_{i-1}\) is strongly connected, there exists a directed walk \(\walk'_i\) from \(\vertex'\) to \(\vertex\) in \(\subgraph_{i-1}\). 
Since \(\vertices{\walk'_i} \subset \subgraph_{i-1}\) and \(\ear_i\) is an ear of \(\subgraph_{i-1}\), the only vertices in \(\vertices{\ear_i}\cap\vertices{\walk'_i}\) are \(\vertex\) and \(\vertex'\), thus \(\cycle_i \coloneqq \ear_i\walk'_i \subset \subgraph_i\) is a directed cycle.
The set 
 \(\cyclebasis \coloneqq \{\cycle_i\}_{i=1}^t\) is a weakly fundamental cycle basis of \(\block\), since \(t = \card{\Arcs{\block}}-\card{\vertices{\block}}+1\), and for all \(i > 1\), there is an edge \(e \in \Arcs{\ear_i} \subset \Arcs{\subgraph_{i}} \setminus \Arcs{\subgraph_{i-1}} \) that is not contained in \(\cup_{j=1}^{i-1}\cycle_j \subset \subgraph_{i-1}\). 
\end{proof}

In particular, every strongly connected digraph $\digraph$ has a forward weakly fundamental cycle basis, as it admits a directed ear decomposition (Proposition~\ref{prop:directed-ear}).

Similarly, when not all blocks of \(\digraph\) are either a single arc or strongly connected, we can consider the following extremal cycle basis problem \cite{masing_forward_2023}: Find a cycle basis such that the number of directed cycles is maximum. We obtain:

\begin{proposition}\label{prop:max-forward-cycles}
Let \(\digraph\) be a digraph, let \(\cyclebasis\) be a directed, undirected, integral, or weakly fundamental cycle basis  of \(\digraph\) with a maximum number of directed cycles, and let \(\strongcomponent\) be the set of strongly connected components of \(\digraph\). Then 
\begin{equation}\label{eq:max-forward-cycles}
\card{\cyclebasis \cap \forwardcycles(\digraph)} = \sum_{\subbgraph \in \strongcomponent}\mu(\subbgraph).
\end{equation}
\end{proposition}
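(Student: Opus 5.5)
We prove the two inequalities separately. For the upper bound, take any cycle basis \(\cyclebasis\) of one of the listed types; all of them are in particular linearly independent over \(\bR\), since every type in the hierarchy is directed. The key observation is that every directed cycle of \(\digraph\) lies entirely inside a single strongly connected component: if \(u\) and \(v\) lie on a common directed cycle, then each reaches the other. Hence each \(\cycle\in\cyclebasis\cap\forwardcycles(\digraph)\) lies in \(\cyclespace{\subbgraph}{\bR}\) for a unique \(\subbgraph\in\strongcomponent\), viewed inside \(\bR^{\Darcs}\) by extending with zeros. The cycles of \(\cyclebasis\) assigned to a fixed \(\subbgraph\) are linearly independent vectors of \(\cyclespace{\subbgraph}{\bR}\), so there are at most \(\mu(\subbgraph)\) of them. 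Summing over \(\strongcomponent\) gives \(\card{\cyclebasis\cap\forwardcycles(\digraph)}\le\sum_{\subbgraph\in\strongcomponent}\mu(\subbgraph)\).

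For the lower bound it suffices to build a weakly fundamental cycle basis attaining the bound. This is enough because weakly fundamental bases are also integral, undirected and directed, so the maximum within each of the four classes is at least this value. For each strongly connected component \(\subbgraph\), Proposition~\ref{prop: weakly fundamental forward cycle basis}, or more directly the ear-decomposition construction in its proof, yields a forward weakly fundamental cycle basis \(\cyclebasis_{\subbgraph}\) of \(\subbgraph\) with \(\mu(\subbgraph)\) directed cycles. A single-vertex component gives the empty basis.

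Next contract every strongly connected component to a single vertex, obtaining the condensation \(\digraph'\). Its arcs correspond bijectively to the arcs of \(\digraph\) not inside any component, possibly with parallel arcs. The cycle ranks add up: \(\mu(\digraph)=\sum_{\subbgraph}\mu(\subbgraph)+\mu(\digraph')\). This follows from the formula \(|A|-|V|+c\): the component arc and vertex counts add up, and contracting connected subgraphs preserves the number of weak components. Take any weakly fundamental basis of \(\digraph'\), for instance a fundamental one from a spanning forest \(\tree'\), and lift each of its cycles to a cycle of \(\digraph\). To lift a cycle, keep its inter-component arcs and connect consecutive entry and exit vertices inside each component by a path, which exists because components are strongly connected. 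Vertices of the condensation cycle are distinct, so the lifted walk is a cycle.

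Order the resulting basis with all the \(\cyclebasis_{\subbgraph}\) first, then the lifted cycles in the weakly fundamental order of \(\digraph'\). Each lifted cycle contains an inter-component arc, namely its fundamental chord, that does not appear in any earlier cycle. Earlier component cycles use only intra-component arcs, and earlier lifted cycles avoid this chord by the fundamental property in \(\digraph'\). This gives the weakly fundamental property.

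The total number of cycles equals \(\mu(\digraph)\), and weakly fundamental sets are linearly independent. So this is a weakly fundamental cycle basis containing at least \(\sum\mu(\subbgraph)\) directed cycles; by the upper bound it contains exactly that many.

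The main obstacle is the lifting step. We need the lifted cycles to be genuine cycles, which requires non-self-intersecting connecting paths inside each component. We also need the cycle-rank additivity for the condensation to be stated carefully.
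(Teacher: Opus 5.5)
Your proof is correct and follows essentially the paper's argument. You use the same component-wise upper bound, and for the lower bound you build a weakly fundamental basis from forward bases of the strongly connected components together with cycles through the inter-component arcs. The paper avoids the contraction: it takes a spanning forest of $D$ that extends spanning trees of the components, and replaces the intra-component fundamental cycles by forward ones. This makes the cycle-rank additivity automatic, and it also makes the lifting step you flag as the main obstacle automatic. Indeed, your lifted cycles are exactly the paper's inter-component fundamental cycles when the connecting paths are taken inside those trees.
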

\begin{proof}
    Any directed cycle in $\digraph$ is contained in some strongly connected component of $\digraph$. In each component $\subbgraph \in \strongcomponent$, we can have at most $\mu(\subbgraph)$ linearly independent cycles, so that for any cycle basis $\cyclebasis$ of $\digraph$ holds $\card{\cyclebasis \cap \forwardcycles(\digraph)} \leq \sum_{\subbgraph \in \strongcomponent}\mu(\subbgraph)$.

    We will now construct a weakly fundamental cycle basis $\cyclebasis$ of $\digraph$ such that \eqref{eq:max-forward-cycles} holds. At first, we choose a spanning tree of each strongly connected component of $\digraph$. We then connect these trees to a spanning forest of $\digraph$ and consider the corresponding fundamental cycle basis $\cyclebasis'$ of $\digraph$. Any strongly connected digraph admits a forward cycle basis and hence a forward weakly fundamental cycle basis by Proposition~\ref{prop: weakly fundamental forward cycle basis}. In $\cyclebasis'$, for each strongly connected component $\subbgraph \in \strongcomponent$, we consider the cycles that arise from chords in $\subbgraph$. These constitute a fundamental cycle basis of $\subbgraph$, which we replace by a forward weakly fundamental cycle basis of $\subbgraph$. This way, we can transform $\cyclebasis'$ to a weakly fundamental cycle basis $\cyclebasis$ of $\digraph$ that contains precisely $\sum_{\subbgraph \in \strongcomponent}\mu(\subbgraph)$ directed cycles.
\end{proof}

A different proof for Proposition~\ref{prop: weakly fundamental forward cycle basis} and \ref{prop:max-forward-cycles}, using a notion of cycle subspaces and a contraction argument, is given by Masing \cite{masing_diss}.

As noted in \cite{masing_forward_2023}, not all digraphs admit a forward fundamental cycle basis. We now show an easily verifiable characterization of digraphs which admit a forward fundamental cycle basis.

We will make use of the following lemma that states that every element of the cycle space with forward arcs only lives in the cone generated by the directed cycles.

\begin{lemma}[\cite{BangJensenGutin2009}]
\label{lemma:generators of forward cycles}
Let $\digraph$ be a directed graph and let $\cycleo \in \Dcyclespace$ with $\cycleo \geq \nullv$. Then 
there exist nonnegative scalars $\{\lambda_{\cycle}\}_{\cycle\in\forwardcycles(\digraph)}$ such that
\[
\cycleo \;=\; \sum_{\cycle\in\forwardcycles(\digraph)} \lambda_{\cycle}\,\cycle.
\]
Moreover, if $\cycleo$ has integer coordinates, then the coefficients $\lambda_{\cycle}$ can be chosen to be integers.
\end{lemma}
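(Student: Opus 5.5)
The plan is to prove the lemma by a peeling argument on the support of \(\cycleo\), inducting on \(\card{\supp(\cycleo)}\). If \(\cycleo = \nullv\), take all \(\lambda_{\cycle} = 0\).

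Otherwise, consider the subdigraph \(H\) formed by the arcs of \(\supp(\cycleo)\). Since \(\cycleo\) lies in \(\Dcyclespace\), flow conservation holds at every vertex. All coordinates on \(\supp(\cycleo)\) are positive, so every vertex of \(H\) with an incoming arc in \(H\) also has an outgoing arc in \(H\), and conversely. Starting at any arc of \(H\) and always leaving the current vertex along an arc of \(H\), we must eventually revisit a vertex, because \(H\) is finite. The portion of the walk between the two visits is a directed cycle \(\cycle_0 \in \forwardcycles(\digraph)\) with \(\supp(\cycle_0) \subseteq \supp(\cycleo)\).

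Set \(\lambda_0 \coloneqq \min_{\Arc \in \cycle_0} \cycleo(\Arc) > 0\) and \(\cycleo' \coloneqq \cycleo - \lambda_0 \cycle_0\). Then \(\cycleo'\) lies in \(\Dcyclespace\), since the cycle space is a subspace. It satisfies \(\cycleo' \geq \nullv\) because \(\cycle_0\) has entries \(1\) exactly on its arcs and \(\lambda_0\) is the minimum there. Its support is strictly smaller, since the minimizing arc drops out. By induction, \(\cycleo' = \sum_{\cycle} \lambda'_{\cycle} \cycle\) with nonnegative \(\lambda'_{\cycle}\). Adding \(\lambda_0\) to the coefficient of \(\cycle_0\) gives the desired nonnegative representation of \(\cycleo\).

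For the integrality statement, run the same induction under the extra hypothesis that \(\cycleo\) is integral. Then \(\lambda_0\) is a minimum of integers, hence an integer. So \(\cycleo'\) is again integral, and induction gives integer coefficients throughout.

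The only step needing care is extracting a directed cycle from the support. It relies on positivity, which guarantees that conservation gives out-degree at least one at every vertex of \(H\) having in-degree at least one. It also relies on the cycle being simple, so that its incidence vector has entries exactly \(1\) on its arcs. Taking the first repeated vertex along the walk ensures simplicity, including the case of a loop or a 2-cycle, which are legitimate directed cycles. The rest is routine bookkeeping.
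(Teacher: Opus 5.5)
Your proof is correct and is the standard cycle-peeling (flow decomposition) argument for nonnegative circulations; the paper gives no proof of its own and cites Bang-Jensen and Gutin, where the result is established by essentially this argument. Your two key steps are exactly what is needed: taking the first repeated vertex makes the extracted directed cycle simple with a $0/1$ incidence vector, and integrality follows because $\lambda_0$ is a minimum of integers.
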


We can now give our surprisingly simple characterization.

\begin{theorem}\label{thm: forward fundamental existence}
    Let \(\digraph\) be a digraph such that each block is strongly connected or a single arc. Then $\digraph$ admits a forward fundamental cycle basis if and only if \[ |\forwardcycles(\digraph)| = \cyclerank{\digraph}. \]
    In this case, $\forwardcycles(\digraph)$ constitutes the unique forward cycle basis of $\digraph$.
\end{theorem}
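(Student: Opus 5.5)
The plan is to prove the two directions separately, working with one spanning forest $\tree$ and the coordinates of cycle-space vectors on its chords. The key tool is the standard expansion in a fundamental basis: every $\cycleo\in\Dcyclespace$ satisfies
\[
\cycleo=\sum_{\Arc\in\treec}\cycleo(\Arc)\,\treecycleT{\Arc},
\]
because each chord $\Arc$ lies in $\treecycleT{\Arc}$ and in no other fundamental cycle.

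\emph{($\Rightarrow$).} Assume $\{\treecycleT{\Arc}\}_{\Arc\in\treec}$ is a forward fundamental cycle basis, so every $\treecycleT{\Arc}\geq\nullv$. Let $\cycle\in\forwardcycles(\digraph)$ be arbitrary.
\begin{enumerate}
\item By the expansion, $\cycle=\sum_{\Arc\in\treec\cap\cycle}\treecycleT{\Arc}$, with every coefficient equal to $1$, since $\cycle$ has $0/1$ entries.
\item All summands are nonnegative and $\cycle$ is a $0/1$ vector, so the supports of the $\treecycleT{\Arc}$ in this sum are pairwise disjoint and their union is $\supp(\cycle)$.
\item A cycle's arc set cannot be partitioned into two or more nonempty cycles, because deleting any arc of a cycle leaves a path, which contains no cycle. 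Hence exactly one chord lies in $\cycle$, and $\cycle=\treecycleT{\Arc}$.
\end{enumerate}
Therefore $\forwardcycles(\digraph)$ is contained in the basis. Conversely, every basis element is a directed cycle, so $\forwardcycles(\digraph)$ equals the basis and $|\forwardcycles(\digraph)|=\cyclerank{\digraph}$.

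\emph{Uniqueness.} By Proposition~\ref{prop: weakly fundamental forward cycle basis} a forward cycle basis exists. Every forward cycle basis is a subset of $\forwardcycles(\digraph)$ of size $\cyclerank{\digraph}$. So if $|\forwardcycles(\digraph)|=\cyclerank{\digraph}$, then $\forwardcycles(\digraph)$ is the only forward cycle basis.

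\emph{($\Leftarrow$).} Assume $|\forwardcycles(\digraph)|=\cyclerank{\digraph}$. It remains to exhibit a spanning forest $\tree$ whose fundamental cycles are all directed; by uniqueness they are then exactly $\forwardcycles(\digraph)$.
\begin{enumerate}
\item \emph{Reduce to one block.} Every directed cycle lies inside a single block. Each strongly connected block $\block$ contains at least $\mu(\block)$ directed cycles, since it has a forward basis. By Observation~\ref{obs:blocks} the counts add up over blocks, so equality must hold blockwise. It therefore suffices to treat a strongly connected block $\block$ with exactly $\mu(\block)$ directed cycles, and then glue the forests of the blocks together.
\item \emph{Count cycles along an ear decomposition.} Take a directed ear decomposition $\subgraph_1\subset\dots\subset\subgraph_t=\block$ from Proposition~\ref{prop:directed-ear}. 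Each $\subgraph_i$ is strongly connected, so it has at least $\mu(\subgraph_i)=i$ directed cycles. Each new ear $\ear_i$ creates at least one new directed cycle. Since the final count is exactly $t$, every ear creates exactly one new directed cycle $\cycle_i$.
\item \emph{Extract a uniqueness property.} Step 2 means that in $\subgraph_{i-1}$ there is a \emph{unique} directed path $\walk'_i$ from the last vertex of $\ear_i$ back to its first vertex. Indeed, two distinct such paths would give two distinct new directed cycles through $\ear_i$.
\item \emph{Choose the chords.} In each ear $\ear_i$, and in the cycle $\subgraph_1$, declare one arc a chord: the last arc of the ear. Let $\tree$ be the remaining arcs. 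Inductively, $\tree\cap\Arcs{\subgraph_i}$ is a spanning tree of $\subgraph_i$, since removing one arc from an ear keeps its interior vertices attached and leaves no cycle. The arc count $|\Arcs{\block}|-t=|\vertices{\block}|-1$ confirms this.
\item \emph{Identify the fundamental cycles.} Show by induction on $i$ that the fundamental cycle of the chord of $\ear_i$ is $\cycle_i=\ear_i\walk'_i$. This amounts to showing that the tree path in $\tree\cap\subgraph_{i-1}$ between the endpoints of $\ear_i$ coincides with $\walk'_i$.
\end{enumerate}

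The main obstacle is step 5. It is not automatic that the unique directed path $\walk'_i$ avoids all earlier chords. If the naive choice of chords fails, I would instead choose the chords adaptively. Specifically, I would use the uniqueness from step 3 to argue that every arc of $\ear_i$ lies in exactly the same set of directed cycles. Then any arc of $\ear_i$ can serve as its chord, and the chords can be picked so that no $\walk'_j$ with $j>i$ passes through the chord of $\ear_i$.
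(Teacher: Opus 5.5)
Your ($\Rightarrow$) direction and your uniqueness argument are correct and essentially coincide with the paper's. The gap is step~5 of ($\Leftarrow$), and it is the whole content of that direction. The cycles $C_1,\dots,C_t$ are all the directed cycles. An arc of $P_i$ can lie on $C_j$ only for $j\ge i$, and for $j>i$ exactly when it lies on $P'_j$. So step~5 amounts to finding in each $C_i$ an arc that lies on no other directed cycle of $D$. This is precisely the Sys\l{}o criterion the theorem hinges on.

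Step~3 cannot supply this. Uniqueness of $P'_i$ inside $D_{i-1}$ is a local statement and says nothing about how later return paths $P'_j$ run through $P_i$. Your fallback claim, that all arcs of $P_i$ lie on the same directed cycles, is false once a later ear attaches to interior vertices of $P_i$. For example, let $D_1$ be the $2$-cycle $a\to b\to a$, let $P_2=a\to x\to y\to b$, and let $P_3=(y,x)$.
\begin{itemize}
\item The resulting digraph is non-separable and strongly connected, with $\mu=6-4+1=3$.
\item It has exactly three directed cycles: $aba$, $axyba$ and $xyx$.
\item The arc $(x,y)$ of $P_2$ lies on two of them, while $(a,x)$ and $(y,b)$ lie on one.
\item Choosing $(b,a)$ as the chord of $D_1$ does break step~5, since $P'_2=(b,a)$.
\end{itemize}
Even if your claim held, it would not let you choose the chords. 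If some $P'_j$ with $j>i$ met one arc of $P_i$, it would then contain all of them, and no arc of $P_i$ would work.

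What is missing is a global argument that every directed cycle $\bar C$ has an arc contained in no other directed cycle. The paper obtains this from the cone lemma (Lemma~\ref{lemma:generators of forward cycles}):
\begin{enumerate}
\item Pick $\bar e\in\bar C$ minimizing the number $|\mathcal H(e)|$ of other directed cycles through $e$.
\item Form $\widehat C=\sum_{C\in\mathcal F(D)\setminus\{\bar C\}}C-|\mathcal H(\bar e)|\,\bar C$. By the choice of $\bar e$, this is a nonnegative element of the cycle space.
\item By the cone lemma, $\widehat C$ is a nonnegative combination of directed cycles.
\item Since $\mathcal F(D)$ is a basis, the two expressions for $\widehat C$ have the same coefficients. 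Comparing the coefficient of $\bar C$ gives $-|\mathcal H(\bar e)|\ge 0$, so $\bar e$ lies on no other directed cycle.
\end{enumerate}
Deleting one such exclusive arc per cycle leaves a spanning forest whose fundamental cycles are exactly the directed cycles. With this lemma, your ear construction becomes unnecessary. It would still go through, since an exclusive arc of $C_i$ necessarily lies in $P_i$ (every arc of $P'_i$ lies on some earlier $C_j$). Without the lemma, step~5 remains unproved.
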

\begin{proof}
 Assume that a forward fundamental cycle basis $\cyclebasis$ exists.  Let $\tree\subset\digraph$ be the spanning forest that induces the basis, and let
\(\treec\) denote the set of chords of \(\tree\).  Then the cycle basis is
\[
\cyclebasis=\{\treecycleT{\Arc} \mid \Arc\in\treec\},
\]
where $\treecycleT{\Arc}$ denotes the unique cycle contained in $\tree\cup\{\Arc\}$.  
Note that for all \(\cycle\in\forwardcycles(\digraph)\) and all \(\Arc \in \Darcs\), we have \(\cycle(\Arc)=1\) if and only if \(\Arc\in\cycle\) and \(\cycle(\Arc)=0\) otherwise. Since $\cyclebasis\subset\forwardcycles(\digraph)$ and
\(
\card{\cyclebasis}=\cyclerank{\digraph},
\)
it suffices to show $\forwardcycles(\digraph)\subset\cyclebasis$.

Let $\cycleo\in\forwardcycles(\digraph)$.  As $\{\treecycleT{\Arc}\}_{\Arc\in\treec}$ is an integral cycle basis, there exist integers $\{\lambda_{\Arc}\}_{\Arc\in\treec}$ with
\[
\cycleo=\sum_{\Arc\in\treec}\lambda_{\Arc}\,\treecycleT{\Arc}.
\]
For each chord $\Arco\in\treec$ the arc $\Arco$ belongs only to the cycle $\treecycleT{\Arco}$ among the cycles in \(\cyclebasis\), and moreover $\treecycleT{\Arco}(\Arco)=1$ because $\treecycleT{\Arco}$ is directed.  Comparing the value of both sides on the arc $\Arco$ we obtain
\[
\cycleo(\Arco)=\sum_{\Arc\in\treec}\lambda_{\Arc}\,\treecycleT{\Arc}(\Arco)=\lambda_{\Arco}.
\]
Thus $\lambda_{\Arco}=1$ if $\Arco\in\cycleo$ and $\lambda_{\Arco}=0$ otherwise.  Hence $\cycleo$ is the sum of precisely those $\treecycleT{\Arco}$ that correspond to the chords contained in $\cycleo$.  If more than one such $\treecycleT{\Arco}$ appeared in the sum, the support of the sum would be a union of two or more arc-disjoint cycles. This contradicts that $\cycleo$ is a cycle.  Therefore exactly one coefficient is $1$ and the others are $0$, so $\cycleo=\treecycleT{\Arco}$ for some $\Arco\in\treec$.  Consequently $\cycleo\in\cyclebasis$, proving $\forwardcycles(\digraph)\subset\cyclebasis$ and hence \( \cyclebasis=\forwardcycles(\digraph) \).

Conversely, suppose
\(
\card{\forwardcycles(\digraph)}=\cyclerank{\digraph},
\)
i.e., the family of directed cycles has a size equal to the cycle rank. Since by Proposition~\ref{prop: existence of weakly fundamenta forward cycle basis}, a forward cycle basis $\cyclebasis$ exists and is contained in \(\forwardcycles(\digraph)\), it must be equal to \(\forwardcycles(\digraph)\). Given \(\cycleo\in\cyclebasis\) we define, for each \(\Arc\in\cycleo\), the set 
\[\eincycles{e} \coloneqq \{\cycle \in \cyclebasis\setminus\{\cycleo\} \mid e \in \cycle\}.\]
Using the standard characterization of fundamental cycle bases \cite{syslo_characterization_2006}, a cycle basis \(\cyclebasis\) is fundamental if and only if for every \(\cycleo \in \cyclebasis\) there exists an arc \(\Arco \in \cycleo\) such that
\[
\Arco \notin \bigcup_{\cycle \in \cyclebasis \setminus \{\cycleo\}} \cycle ,
\]
or equivalently, such that \(\card{\eincycles{\Arco}} = 0\). Let \(\Arco \in \argmin_{\Arc\in\cycleo}\left(\card{\eincycles{\Arc}}\right)\). Consider the vector 
\begin{equation}\label{eq: prop ff eq 1}
\circuit \coloneqq \sum_{\cycle\in\cyclebasis\setminus\{\cycleo\}}\cycle-\card{\eincycles{\Arco}}\cycleo \in \Dcyclespace.
\end{equation} 
We prove that $\circuit$ is non-negative. 
For every $\Arc \notin \cycleo$, we clearly have $\circuit(\Arc) \geq0$. 
Suppose now that $\Arc \in \cycleo$. Then
\[
\circuit(\Arc) = \sum_{\cycle\in\eincycles{\Arc}}\cycle(\Arc) - \card{\eincycles{\Arco}}\cycleo(\Arc) = \card{\eincycles{\Arc}} - \card{\eincycles{\Arco}}\cycleo(\Arc) \geq 0
\]
since \(\card{\eincycles{\Arco}}\) was chosen to be minimum and $\cycleo(\Arc) \in \{0, 1\}$ as $\cyclebasis$ is forward. Then, by Lemma~\ref{lemma:generators of forward cycles}, there exists \(\lambda_{\cycle} \geq 0\) for all \(\cycle\in\forwardcycles(\digraph)\) such that \begin{equation}\label{eq: prop ff eq 2}
\circuit = \sum_{\cycle\in\forwardcycles(\digraph)}\lambda_{\cycle}\cycle.
\end{equation} 
Since \(\forwardcycles(\digraph) = \cyclebasis\), and $\cyclebasis$ is a basis of \(\Dcyclespace\), the linear combinations in equation \eqref{eq: prop ff eq 1} and \eqref{eq: prop ff eq 2} have the same coefficients. In particular:
\[
-\card{\eincycles{\Arco}} = \lambda_{\cycleo} \geq 0, 
\]
which implies \(\card{\eincycles{\Arco}}=0\) as required.
\end{proof}

Theorem~\ref{thm: forward fundamental existence} gives a polynomial-time algorithm at hand to decide whether a digraph $\digraph$ admits a forward fundamental cycle basis: Check at first whether all blocks of $\digraph$ are strongly connected or a single arc. If not, then $\digraph$ cannot have a forward cycle basis by Proposition~\ref{prop: weakly fundamental forward cycle basis}. Otherwise, compute a forward weakly fundamental cycle basis $\cyclebasis$ using a directed ear decomposition. We may then verify that for all \(\cycleo \in \cyclebasis\), there exists \(\Arc\in \cycleo\) such \(\Arc \notin \bigcup_{\cycle \in \cyclebasis\setminus \{\cycleo\}}\cycle\) to check whether $\cyclebasis$ is also strictly fundamental. If yes, then we have found a forward fundamental cycle basis. If no, then $\digraph$ cannot have a forward fundamental cycle basis, as the existence of such a basis would contradict the uniqueness of a forward cycle basis by Theorem~\ref{thm: forward fundamental existence}.

\section{APX-hardness of the Minimum-Weight Forward Weakly Fundamental Cycle Basis Problem}
\label{sec:APX-hardness}

In this section, we consider the problem of finding forward cycle bases with a minimum weight. More precisely, let \(\digraph\) be a digraph endowed with arc weights \(\weight:\Darcs \to \mathbb{R}_{\geq0}\). For each cycle $\cycle$ in $\digraph$, we define its weight as
\[\weight(\cycle) \coloneqq \sum_{\Arc \in \cycle} \weight(\Arc),\]
and the total weight of a cycle basis \(\cyclebasis\) is given by 
\[\weight(\cyclebasis) \coloneqq \sum_{\cycle \in \cyclebasis} \weight(\cycle).\]

\begin{definition}
    Given a digraph $\digraph$ and arc weights $\weight:\Darcs \to \mathbb{R}_{\geq0}$, the \emph{minimum-weight forward (weakly fundamental, fundamental) cycle basis problem} is to find a forward (weakly fundamental, fundamental) cycle basis of $\cyclebasis$ such that $\weight(\cyclebasis)$ is minimum or to decide that no forward cycle basis exists.
\end{definition}

  While existence and complexity results of minimum not necessarily forward cycle bases are known, not much research has been done for their forward counterparts. Table \ref{tab:complexity} summarizes known complexity results for minimum cycle basis problems.

\begin{table}[htbp]
\centering
\begin{threeparttable}
\renewcommand{\arraystretch}{1.25}
\caption{Known computational complexity of the MCB problem for different classes of cycle bases.}
\label{tab:complexity}
\begin{tabular}{@{}lll@{}}
\toprule
\textbf{Type} & \textbf{Non-forward} & \textbf{Forward} \\
\midrule
Directed      & P \cite{horton1987} & P \cite{forwardGleiss} \\
Undirected      & P \cite{horton1987} & P \cite{forwardGleiss} \\
Integral      & ?      & ? \\
Weakly fundamental  & APX-hard \cite{Rizzi2009}       & \textbf{APX-hard}\tnote{*} \\
Fundamental         & APX-hard  \cite{galbiati2003approximability}       & \textbf{P}\tnote{*} \\
\bottomrule
\end{tabular}
\begin{tablenotes}
    \item[*] New result.
    \item[?] Unknown complexity status.
\end{tablenotes}
\end{threeparttable}
\end{table}

The existence of forward (weakly fundamental) cycle bases has already been characterized by Proposition~\ref{prop: weakly fundamental forward cycle basis}. The problem is polynomial-time solvable for arbitrary forward cycle bases \cite{forwardGleiss}. For forward fundamental cycle bases, we can use Theorem~\ref{thm: forward fundamental existence}.

\begin{corollary}
A minimum-weight forward fundamental cycle basis of $\digraph$, if it exists, 
can be computed in polynomial time.  
If no such basis exists, its nonexistence can also be certified in polynomial time.
\end{corollary}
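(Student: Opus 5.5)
The plan is to reduce the optimization problem to an existence problem via the uniqueness part of Theorem~\ref{thm: forward fundamental existence}. If a forward fundamental cycle basis exists, the theorem says it equals $\forwardcycles(\digraph)$, which is then the \emph{only} forward cycle basis of $\digraph$. So the minimum-weight forward fundamental cycle basis is this basis, regardless of the weights $\weight$. The algorithm is therefore the decision procedure described right after Theorem~\ref{thm: forward fundamental existence}, together with computing $\weight(\cyclebasis)$ at the end.

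The steps are as follows.

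\textbf{Step 1: check the block condition.} Compute the blocks of $\digraph$ and test whether each one is strongly connected or a single arc. Both tasks take linear time with standard depth-first search methods. If some block fails, then by Proposition~\ref{prop: weakly fundamental forward cycle basis} no forward cycle basis exists at all. The certificate is such a block, together with a pair of its vertices not joined by a directed path; this is checkable in polynomial time.

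\textbf{Step 2: build a forward basis.} Otherwise, compute a directed ear decomposition of every nontrivial block using Proposition~\ref{prop:directed-ear}. Turn it into a forward weakly fundamental basis $\cyclebasis$ as in the proof of Proposition~\ref{prop: weakly fundamental forward cycle basis}. This requires one directed path search per ear, so polynomial time overall.

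\textbf{Step 3: test fundamentality.} For each $\cycleo\in\cyclebasis$, test whether some arc of $\cycleo$ lies in no other cycle of $\cyclebasis$. This takes $\bigo{\mu(\digraph)\cdot\card{\Darcs}}$ time using arc–cycle incidence counts. If the test succeeds, the characterization of Sys{\l}o~\cite{syslo_characterization_2006} shows that $\cyclebasis$ is fundamental, so we output $\cyclebasis$.

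\textbf{Step 4: certify nonexistence.} If the test fails, no forward fundamental basis exists. Indeed, if one existed, it would be the unique forward cycle basis, so it would equal $\cyclebasis$ and $\cyclebasis$ would pass the test. A checkable certificate is the basis $\cyclebasis$ itself together with a cycle $\cycleo\in\cyclebasis$ all of whose arcs occur in other basis cycles. Any verifier can confirm the following in polynomial time:
\begin{itemize}
\item every element of $\cyclebasis$ is a directed cycle;
\item the incidence vectors are linearly independent (Gaussian elimination) and there are $\mu(\digraph)$ of them;
\item $\cycleo$ has no private arc.
\end{itemize}
An alternative certificate is $\mu(\digraph)+1$ distinct directed cycles, which directly witnesses $\card{\forwardcycles(\digraph)}\neq\mu(\digraph)$.

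The only subtle point is the justification of optimality and of the certificate. Both rest on the uniqueness statement: once a forward cycle basis is fundamental, there is no other forward basis at all. This is already established, so I expect no real obstacle. The remaining work is routine bookkeeping of the running times.
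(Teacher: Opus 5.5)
Your proposal is correct and follows the paper's own argument. The uniqueness part of Theorem~\ref{thm: forward fundamental existence} makes any forward fundamental basis trivially optimal, and existence is decided by the block check, the ear-decomposition basis, and the private-arc test described after that theorem. Your explicit certificates (the non-fundamental forward basis $\cyclebasis$ together with a cycle lacking a private arc, or alternatively $\mu(\digraph)+1$ distinct directed cycles) simply spell out what the paper leaves implicit.
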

\begin{proof}
    If a forward fundamental cycle basis exists, then it is unique by Theorem~\ref{thm: forward fundamental existence}, and hence trivially optimal. We can decide the existence as discussed at the end of Section~\ref{sec:existence}.
\end{proof}

The goal of the remainder of this section is to show that the minimum-weight forward weakly fundamental cycle basis problem is APX-hard. 
We give an L-reduction from the non-forward version on digraphs with metric weights, whose APX-hardness -- even for uniform weights -- has been established by Rizzi \cite{Rizzi2009}.

Let $\digraph$ be a digraph and let further $\weight:\Darcs \to \mathbb{R}_{\geq0}$ be \emph{metric} arc weights, i.e., for any arc $\Arc = (u, v)$, $\weight(\Arc)$ is the cost of a shortest $u$-$v$-path w.r.t.\ $\weight$.
We construct a directed graph $\Rdigraph$ by adding to \(\Darcs\), for each $\Arc= (u,v)\in \Darcs$, the arc $\Rarc \coloneqq(v,u)$,  with weight equal to $\weight((u,v))$. 
This transformation clearly produces a directed graph of size polynomial in $\card{\Dvertices}+\card{\Darcs}$, where each weakly connected component is strongly connected. 
Define
\[
\MWFCB{\digraph} \coloneqq
\min \left\{ \weight(\cyclebasis) \;\middle|\; \begin{array}{c} \cyclebasis \text{ is a weakly fundamental} \\ \text{ cycle basis of } \digraph \end{array} \right\}
\]
and
\[
\MWFFCB{\Rdigraph} \coloneqq
\min \left\{ \weight(\cyclebasis) \;\middle|\;
\begin{array}{c}
  \cyclebasis \text{ is a weakly fundamental} \\
  \text{forward cycle basis of } \Rdigraph
\end{array}
\right\}.
\]
For the sake of constructing an L-reduction we need to prove that there exist \(\alpha,\beta \geq 0\) such that:
\begin{enumerate}
    \item For all digraphs \(\digraph\), we have 
    \begin{equation}\label{eq: alpha bound}
        \MWFFCB{\Rdigraph} \leq \alpha \MWFCB{\digraph}
    \end{equation}
    \item For any weakly fundamental forward cycle basis \(\cyclebasis'\) of \(\Rdigraph\), we can find a weakly fundamental cycle basis \(\cyclebasis\) of \(\digraph\) in polynomial time such that:
    \begin{equation}\label{eq: beta bound}
        |\weight(\cyclebasis)- \MWFCB{\digraph}| \leq \beta |\weight(\Rcyclebasis) - \MWFFCB{\Rdigraph}|
    \end{equation}
\end{enumerate}

\begin{observation}\label{obs:two-way-cycle-rank}
    The cycle rank of \(\Rdigraph\) is equal to 
    \[\Rcyclerank \coloneqq 2\card{\Darcs} - \card{\Dvertices} + 1.\]
\end{observation}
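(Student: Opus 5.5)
Since \(\Rdigraph\) is a finite digraph, its cycle rank is given by the usual formula \(\cyclerank{\Rdigraph} = \card{\Arcs{\Rdigraph}} - \card{\vertices{\Rdigraph}} + c(\Rdigraph)\), where \(c(\Rdigraph)\) is the number of weakly connected components. The plan is to count each of the three terms separately. Throughout we assume, as the statement implicitly does, that \(\digraph\) is weakly connected. This is harmless for the reduction, since cycle bases decompose over components (and blocks, by Observation~\ref{obs:blocks}); without the assumption the constant \(1\) would be replaced by the number of components of \(\digraph\).

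First, the vertex set is unchanged by the construction, so \(\card{\vertices{\Rdigraph}} = \card{\Dvertices}\).

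Second, the arcs of \(\Rdigraph\) are the original arcs together with one reversed copy \(\Rarc\) for each \(\Arc \in \Darcs\). Each reversed copy is added as a new arc, parallel to but distinct from any existing arc of the same orientation, so this gives \(\card{\Arcs{\Rdigraph}} = 2\card{\Darcs}\). The one point needing care is exactly this: if \(\Darcs\) already contained both \((u,v)\) and \((v,u)\), we must treat \(\Rdigraph\) as a multidigraph in order to keep the count. The cycle-space definition via incidence vectors in \(\bR^{\Arcs{\Rdigraph}}\) handles parallel arcs without any change. Each pair \(\Arc, \Rarc\) then forms a directed 2-cycle, which is consistent with the formula.

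Third, adding a reversed copy of an arc does not change which vertices are joined by undirected walks. Hence \(\Rdigraph\) has the same weakly connected components as \(\digraph\), namely exactly one.

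Substituting the three counts gives \(\cyclerank{\Rdigraph} = 2\card{\Darcs} - \card{\Dvertices} + 1 = \Rcyclerank\), as claimed.

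Alternatively, the same count can be obtained from Proposition~\ref{prop:directed-ear}. By the third step, \(\Rdigraph\) is strongly connected, so it has a directed ear decomposition, and the proposition gives the number of ears as \(\card{\Arcs{\Rdigraph}} - \card{\vertices{\Rdigraph}} + 1\). I would mention this only as a remark.
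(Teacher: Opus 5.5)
Your argument is correct and matches the reasoning the paper leaves implicit (it states the observation without proof): the vertex set and the weakly connected components are unchanged, the arc count doubles, and $\mu = |A| - |V| + c$ gives the formula. Both of your caveats are needed. The constant $1$ requires $\digraph$ to be weakly connected, which the paper only makes explicit later by reducing to $2$-connected $\digraph$ in the proof of Theorem~\ref{thm:APX-hard}. And $\Rdigraph$ must be read as a multidigraph if $\digraph$ already contains an antiparallel pair.
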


In particular, the cycle rank of \(\Rdigraph\) exceeds that of \(\digraph\) by exactly \(\card{\Darcs}\).
We now focus on a distinguished family of directed cycles in \(\Rdigraph\).
For each arc \(\Arc = (\tail,\head) \in \Darcs\), let \(\simplecycle{\Arc}\) denote the directed \(2\)-cycle in $\Rdigraph$ formed by the arcs \((\tail,\head)\) and \((\head,\tail)\). Since \(\weight\) is metric, \(\simplecycle{\Arc}\) is a smallest weight cycle in \(\Rdigraph\) containing \(\Arc\). These cycles constitute precisely the set
\[
\simplecycles \coloneqq \{\simplecycle{\Arc} \mid \Arc \in \Darcs\}
\]
of all \(2\)-cycles of \(\Rdigraph\).

Furthermore, for every cycle \(\cycle\) in \(\digraph\), we define a directed counterpart \(\Rcycle\) by choosing, for each arc \(\Arc=(\tail,\head)\in\cycle\), one of the two orientations \((\tail,\head)\) or \((\head,\tail)\) such that \(\Rcycle\) is a directed cycle in \(\Rdigraph\).
We call \(\Rcycle\) a \emph{directed lift} of \(\cycle\) in \(\Rdigraph\).
There are exactly two such directed lifts, corresponding to the two traversal directions of \(\cycle\); either choice is admissible, and we fix one.
In particular, for every \(\Arc \in \cycle\), there exists \(\ArcC \in \{\Arc,\Rarc\}\) such that \(\ArcC \in \Rcycle\). See Figure \ref{fig: reduction} for an illustrative example.

Finally, there is a natural extension of any cycle basis \(\cyclebasis\) of \(\digraph\) to a \emph{forward} cycle basis of \(\Rdigraph\), obtained by augmenting the set
\(\{\Rcycle \mid \cycle \in \cyclebasis\}\) with the \(2\)-cycles \(\simplecycle{\Arc}\):%

\begin{figure}[h!]
    \centering
\begin{minipage}{0.45\textwidth}
\begin{tikzpicture}[
  every node/.style={circle, draw, minimum size=6mm, inner sep=1pt, fill=white},
  >={Stealth[length=6pt]}
]

  \def\scaleconst{3}
  \def\shiftconst{3}
  \coordinate (A1) at ({0*\scaleconst},{0*\scaleconst});
  \coordinate (A2) at ({1*\scaleconst},{0.75*\scaleconst});
  \coordinate (A3) at ({2*\scaleconst},{0*\scaleconst});
  \coordinate (A4) at ({1*\scaleconst},{-0.75*\scaleconst});

  \coordinate (B1) at ({0.5*\scaleconst},{0*\scaleconst});
  \coordinate (B2) at ({1*\scaleconst},{0.25*\scaleconst});
  \coordinate (B3) at ({1.5*\scaleconst},{0*\scaleconst});
  \coordinate (B4) at ({1*\scaleconst},{-0.25*\scaleconst});

  \newcommand{\uarc}[2]{%
    \draw[->, shorten >=8pt, shorten <=8pt] (#2) to (#1);
  }

  {\color{skyblue}\uarc{A1}{A2}}
  {\color{black}\uarc{A2}{A3}}
  {\color{vividorchid}\uarc{A3}{A4}}
  \uarc{A4}{A1}

  {\color{skyblue}\uarc{B1}{B2}}
  {\color{black}\uarc{B2}{B3}}
  \uarc{B3}{B4}
  \uarc{B4}{B1}

  {\color{skyblue}\uarc{A1}{B1}}
  {\color{skyblue}\uarc{A2}{B2}}
  \uarc{A3}{B3}
  \uarc{A4}{B4}

  \node[draw=skyblue,text=skyblue] (n1) at (A1) {1};
  \node[draw=skyblue,text=skyblue] (n2) at (A2) {2};
  \node[draw=vividorchid] (n3) at (A3) {{\color{vividorchid}3}};
  \node[draw=vividorchid] (n4) at (A4) {{\color{vividorchid}4}};

  \node[draw=skyblue,text=skyblue] (n5) at (B1) {5};
  \node[draw=skyblue,text=skyblue] (n6) at (B2) {6};
  \node (n7) at (B3) {7};
  \node (n8) at (B4) {8};  
  \node[shape=rectangle, draw=none, inner sep=0pt, minimum size=0pt, right=of A4, yshift = 33pt, text=vividorchid] (n9) {\(\Arc\)};
  \node[shape=rectangle, draw=none, inner sep=0pt, minimum size=0pt, right=of A1, yshift = 40pt, xshift =  -10pt, text=skyblue] (n9) {\(\cycle\)};

\end{tikzpicture}
\end{minipage}

\vspace{0.5cm}
\begin{minipage}{0.45\textwidth}
\begin{tikzpicture}[
  every node/.style={circle, draw, minimum size=6mm, inner sep=1pt, fill=white},
  >={Stealth[length=6pt]}
]

  \def\scaleconst{3}
  \def\shiftconst{3}
  \coordinate (A1) at ({0*\scaleconst},{0*\scaleconst});
  \coordinate (A2) at ({1*\scaleconst},{0.75*\scaleconst});
  \coordinate (A3) at ({2*\scaleconst},{0*\scaleconst});
  \coordinate (A4) at ({1*\scaleconst},{-0.75*\scaleconst});

  \coordinate (B1) at ({0.5*\scaleconst},{0*\scaleconst});
  \coordinate (B2) at ({1*\scaleconst},{0.25*\scaleconst});
  \coordinate (B3) at ({1.5*\scaleconst},{0*\scaleconst});
  \coordinate (B4) at ({1*\scaleconst},{-0.25*\scaleconst});

  \newcommand{\doubledarc}[2]{%
    \draw[->, shorten >=8pt, shorten <=8pt, bend left=10] (#2) to (#1);
    \draw[->, shorten >=8pt, shorten <=8pt, bend left=10] (#1) to (#2);
  }
  \newcommand{\doubledarccolor}[4]{%
    \draw[->, shorten >=8pt, shorten <=8pt, bend left=10, draw=#4] (#2) to (#1);
    \draw[->, shorten >=8pt, shorten <=8pt, bend left=10, draw=#3] (#1) to (#2);
  }

  \doubledarccolor{A1}{A2}{black}{skyblue}
  \doubledarc{A2}{A3}
  {\color{vividorchid}
  \doubledarc{A3}{A4}}
  \doubledarc{A4}{A1}

  \doubledarccolor{B1}{B2}{skyblue}{black}
  \doubledarc{B2}{B3}
  \doubledarc{B3}{B4}
  \doubledarc{B4}{B1}

  \doubledarccolor{A1}{B1}{skyblue}{black}
  \doubledarccolor{A2}{B2}{black}{skyblue}
  \doubledarc{A3}{B3}
  \doubledarc{A4}{B4}

  \node[draw=skyblue,text=skyblue] (n1) at (A1) {1};
  \node[draw=skyblue,text=skyblue] (n2) at (A2) {2};
  \node[draw=vividorchid] (n3) at (A3) {{\color{vividorchid}3}};
  \node[draw=vividorchid] (n4) at (A4) {{\color{vividorchid}4}};

  \node[draw=skyblue,text=skyblue] (n5) at (B1) {5};
  \node[draw=skyblue,text=skyblue] (n6) at (B2) {6};
  \node (n7) at (B3) {7};
  \node (n8) at (B4) {8};
   \node[shape=rectangle, draw=none, inner sep=0pt, minimum size=0pt, right=of A4, yshift = 40pt, xshift =  -10pt, text=vividorchid] (n9) {\(\simplecycle{\Arc}\)};
   \node[shape=rectangle, draw=none, inner sep=0pt, minimum size=0pt, right=of A1, yshift = 40pt, xshift =  -10pt, text=skyblue] (n9) {\(\Rcycle\)};

\end{tikzpicture}
\end{minipage}
    \caption{Reduction applied to the Wagner graph with highlighted 2-cycle, and cycle lifting.}\label{fig: reduction}
\end{figure}

\begin{lemma}[Forward lifting cycle bases]\label{lem:cycle-basis-lifting}
    If \cyclebasis \ is a cycle basis of \digraph, then 
    \begin{equation}
        \Rcyclebasis \coloneqq \{\Rcycle \mid \cycle \in \cyclebasis \} \cup \simplecycles
    \end{equation}
    is a forward cycle basis of \(\Rdigraph\).
\end{lemma}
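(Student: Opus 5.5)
The plan is to check three things: the elements of \(\Rcyclebasis\) are directed cycles, their number equals \(\Rcyclerank\), and they are linearly independent over \(\bR\). The first point holds by construction. Each lift \(\Rcycle\) is a directed cycle of \(\Rdigraph\), and each \(\simplecycle{\Arc}\) is a directed \(2\)-cycle. For the count, we have \(\card{\cyclebasis}=\cyclerank{\digraph}=\card{\Darcs}-\card{\Dvertices}+c\) (with \(c\) the number of weakly connected components; the paper writes \(+1\)) and \(\card{\simplecycles}=\card{\Darcs}\). By Observation~\ref{obs:two-way-cycle-rank}, the total is \(\Rcyclerank\). We also have to make sure the lifts are pairwise distinct and distinct from the \(2\)-cycles. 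This will follow from independence, since a repeated vector would give a dependence. So it remains only to prove linear independence in \(\bR^{\Arcs{\Rdigraph}}\).

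For independence, I would use the linear projection \(\projection:\bR^{\Arcs{\Rdigraph}}\to\bR^{\Darcs}\) defined by \(\projection(x)(\Arc)\coloneqq x(\Arc)-x(\Rarc)\). This map folds each reversed arc back onto its original with a sign flip. The key identities are the following.
\begin{enumerate}
\item \(\projection(\simplecycle{\Arc})=\nullv\), since the \(2\)-cycle uses \(\Arc\) and \(\Rarc\) each with coefficient \(+1\).
\item \(\projection(\Rcycle)=\pm\cycle\). For each \(\Arc\in\cycle\), the lift uses \(\Arc\) exactly when \(\cycle\) (in the chosen traversal) traverses \(\Arc\) forward, and uses \(\Rarc\) exactly when it traverses \(\Arc\) backward. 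So the projected coordinate is \(+1\) or \(-1\), matching \(\cycle(\Arc)\) up to a global sign fixed by the traversal direction.
\end{enumerate}
I would verify the second identity carefully from the definition of the incidence vector. This sign bookkeeping is the only delicate point.

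Now suppose
\[
\sum_{\cycle\in\cyclebasis}a_{\cycle}\Rcycle+\sum_{\Arc\in\Darcs}b_{\Arc}\simplecycle{\Arc}=\nullv .
\]
Applying \(\projection\) gives \(\sum_{\cycle} \pm a_{\cycle}\cycle=\nullv\). Since \(\cyclebasis\) is a basis of \(\Dcyclespace\), all \(a_{\cycle}=0\). The remaining relation is \(\sum_{\Arc} b_{\Arc}\simplecycle{\Arc}=\nullv\). The \(2\)-cycles have pairwise disjoint supports: \(\simplecycle{\Arc}\) is the only one containing \(\Arc\), assuming \(\digraph\) has no pair of antiparallel arcs, or else treating parallel copies as distinct arcs. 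Reading off coordinate \(\Arc\) therefore gives \(b_{\Arc}=0\).

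Hence \(\Rcyclebasis\) is a set of \(\Rcyclerank\) linearly independent directed cycles in \(\cyclespace{\Rdigraph}{\bR}\), so it is a forward cycle basis. I expect no real obstacle. The step needing the most care is the sign convention in \(\projection(\Rcycle)=\pm\cycle\), together with the implicit assumption that \(\Rdigraph\) has no coincidences between an added arc \(\Rarc\) and an existing arc of \(\digraph\).
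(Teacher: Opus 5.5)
Your proof is correct, but it runs the dimension count from the other side: you show that the cycles in \(\Rcyclebasis\) are linearly independent, whereas the paper shows that they span \(\cyclespace{\Rdigraph}{\bR}\).

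The paper's tool is a substitution. From a cycle \(\cycle\) of \(\Rdigraph\) it subtracts \(\simplecycle{\Arc}\) for every reversed arc \(\Rarc\) used by \(\cycle\). This leaves a vector supported on \(\Darcs\), hence in \(\Span(\cyclebasis)\). It then notes that the same substitution turns \(\Rcycle\) back into \(\cycle\). Done carefully, this is the decomposition \(x=\projection(x)+\sum_{\Arc\in\Darcs}x(\Rarc)\,\simplecycle{\Arc}\), with \(\projection(x)\) regarded as a vector supported on \(\Darcs\). So both proofs revolve around your map \(\projection\), which is exactly the map \(\proj\) of Lemma~\ref{lem:projection}.

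Your use of it is leaner. You only need \(\projection(\simplecycle{\Arc})=\nullv\), \(\projection(\Rcycle)=\pm\cycle\), and the disjoint supports of the \(2\)-cycles, and you never have to treat arbitrary cycles of \(\Rdigraph\). That matters slightly. As written, the paper subtracts each \(\simplecycle{\Arc}\) with coefficient \(1\), which clears \(\Rarc\) only when \(\cycle(\Rarc)=+1\). This is fine for directed cycles, which span here, but in general the coefficient must be \(\cycle(\Rarc)\).

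The spanning route, in exchange, is constructive. It writes every cycle of \(\Rdigraph\) explicitly in terms of \(\Rcyclebasis\), which is the natural way to see that lifting also preserves integrality of \(\cyclebasis\).
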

\begin{proof}
    Since  \(|\Rcyclebasis| = 2\card{\Darcs} - \card{\Dvertices} + 1 = \Rcyclerank\) (Observation~\ref{obs:two-way-cycle-rank}), it suffices to show that \(\Rcyclebasis\) spans the cycle space \(\cyclespace{\Rdigraph}{\bR}\).
    Observe that for every cycle \(\cycle\) in $\Rdigraph$,  the vector 
    \begin{equation}\label{eq: switch}
        \cycle' \coloneqq \cycle - \sum_{\substack{\Arc\in \Darcs \\ \text{s.t. } \cycle(\Rarc)\neq 0}} \simplecycle{\Arc}
    \end{equation}
    has support in \(\Darcs\), as this corresponds to substituting arc \(\Rarc\) in \(\cycle\) with \(\Arc\) whenever \(\Rarc\) is not in \(\Darcs\) by subtracting the cycle \(\simplecycle{\Arc}\) from \(\cycle\). 
    In particular, $\cycle' \in \Span(\cyclebasis)$, so that $\cycle \in \Span(\cyclebasis \cup \simplecycles)$. It now suffices to show that each cycle $\cycle \in \cyclebasis$ lies in $\Span(\Rcyclebasis \cup \simplecycles)$. However, in this case $\bigl(\Rcycle\bigr)'
 = \cycle$, and \eqref{eq: switch} realizes $C$ as a linear combination of $\Rcycle \in \Rcyclebasis$ and elements of $\simplecycles$.
\end{proof}

\begin{lemma}[Weakly fundamental lifting and weights]\label{lem:lifting-cost}
Let \(\cyclebasis\) be any weakly fundamental cycle basis of \(\digraph\). Then $\Rcyclebasis$ is a forward weakly fundamental cycle basis of \(\Rdigraph\) and 
\begin{equation}\label{eq:lifting-weight}
\weight(\Rcyclebasis) \;=\; \weight(\cyclebasis) \;+\;2\weight(\Darcs).
\end{equation}
In particular, lifting increases the objective by an additive constant independent of the choice of \(\cyclebasis\) and
\begin{equation}\label{eq:lifting-opt}
\MWFFCB{\Rdigraph} \leq \MWFCB{\digraph} +2\weight(\Darcs).
\end{equation}
\end{lemma}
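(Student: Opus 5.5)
The lemma has three parts: $\Rcyclebasis$ is a forward cycle basis, it is weakly fundamental, and its weight is $\weight(\cyclebasis)+2\weight(\Darcs)$. By Lemma~\ref{lem:cycle-basis-lifting}, $\Rcyclebasis$ is already a forward cycle basis of $\Rdigraph$. Its elements are the lifts $\Rcycle$ and the $2$-cycles $\simplecycle{\Arc}$, all directed cycles. So only the weakly fundamental property and the weight identity remain.

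For the weakly fundamental property I will write down an explicit ordering. Take the ordering $\cycle_1,\dots,\cycle_\mu$ of $\cyclebasis$ that witnesses weak fundamentality in $\digraph$, with new arcs $\Arc_i\in\cycle_i\setminus\bigcup_{j<i}\cycle_j$.

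List the lifts first, as $\Rop{\cycle_1},\dots,\Rop{\cycle_\mu}$. Each lift $\Rop{\cycle_j}$ uses, for every $\Arc\in\cycle_j$, exactly one of $\Arc,\Rarc$, and no other arcs. Hence the arc $\RarcC{\Arc_i}{\cycle_i}\in\{\Arc_i,\Rop{\Arc_i}\}$ used by $\Rop{\cycle_i}$ cannot lie in any earlier lift $\Rop{\cycle_j}$: that would force $\Arc_i\in\cycle_j$ for some $j<i$, which is false.

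Then append the $2$-cycles. Here I must find an order of $\simplecycles$ in which each $\simplecycle{\Arc}$ still has a new arc. For each $\Arc\in\Darcs$, at most one of $\Arc,\Rarc$ appears in each lift, but across different lifts both orientations may already be used. This is the main obstacle.

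The remedy I plan is to interleave instead of appending. Put $\simplecycle{\Arc}$ early, before any lift, exactly when $\Arc$ is not a distinguished new arc $\Arc_i$; then one of its arcs is trivially new. The cycles $\simplecycle{\Arc}$ with $\Arc=\Arc_i$ are placed at the very end.

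This modified order requires rechecking the lifts. The arcs $\Arc_i,\Rop{\Arc_i}$ do not appear in the early $2$-cycles, and they do not appear in earlier lifts by the argument above. So $\RarcC{\Arc_i}{\cycle_i}$ remains new for $\Rop{\cycle_i}$.

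For the final $2$-cycles $\simplecycle{\Arc_i}$, the arc of $\{\Arc_i,\Rop{\Arc_i}\}$ used by $\Rop{\cycle_i}$ is covered, but the opposite orientation might still be free. I would argue that it is: only lifts of cycles $\cycle_j$ with $j>i$ containing $\Arc_i$ can use it. If some later lift does use it, I would instead choose the lift orientations so that all lifts containing $\Arc_i$ traverse it in one consistent orientation. Since both directed lifts are admissible, the orientation of each lift can be chosen.

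If consistency cannot be achieved this way, the fallback is to order more cleverly. Put $\simplecycle{\Arc_i}$ immediately before $\Rop{\cycle_i}$: it contains the brand-new pair $\Arc_i,\Rop{\Arc_i}$ (new because no earlier cycle contains $\Arc_i$). Then $\Rop{\cycle_i}$ needs a different new arc, which is the genuine difficulty. I expect the intended argument to rely on free choice of lift orientation, and I would try that first.

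The weight identity is a direct computation. The lift $\Rcycle$ has the same weight as $\cycle$ because $\weight(\Rarc)=\weight(\Arc)$. Each $2$-cycle satisfies $\weight(\simplecycle{\Arc})=2\weight(\Arc)$, and summing over $\Darcs$ gives the additive term $2\weight(\Darcs)$. Applying this to an optimal weakly fundamental $\cyclebasis$ yields~\eqref{eq:lifting-opt}.
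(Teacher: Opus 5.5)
There is a genuine gap. The weakly fundamental ordering, which is the real content of the lemma, is never completed, and the route you say you would try first cannot work. Your handling of the forward-basis property, of the new arcs of the lifts, of the early $2$-cycles of non-distinguished arcs, and of the weight identity is correct.

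Consistent lift orientations on the distinguished arcs need not exist. In fact, putting the distinguished $2$-cycles after all lifts fails for every choice of lifts. Take $\completegraph{4}$ on $a,b,c,d$ with any orientation, writing $xy$ for the arc between $x$ and $y$. Let $\cycle_1=a\,b\,c\,a$, $\cycle_2=a\,b\,c\,d\,a$, $\cycle_3=a\,b\,d\,c\,a$, with new arcs $\Arc_1=ab$, $\Arc_2=cd$, $\Arc_3=bd$. Traverse $\cycle_2$ as $a\to b\to c\to d\to a$ and $\cycle_3$ as $a\to b\to d\to c\to a$. These two lifts pass through $ab$ in the same direction but through $cd$ in opposite directions. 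Reversing a lift flips both, so for every choice, $\Rop{\cycle_2}$ and $\Rop{\cycle_3}$ together use both arcs of $\simplecycle{ab}$ or both arcs of $\simplecycle{cd}$. That $2$-cycle then has no new arc wherever it sits among the final $2$-cycles. Your fallback, placing $\simplecycle{\Arc_i}$ immediately before $\Rop{\cycle_i}$, leaves $\Rop{\cycle_i}$ without a guaranteed new arc, as you noted.

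The missing idea is to place $\simplecycle{\Arc_i}$ immediately \emph{after} $\Rop{\cycle_i}$. This is the paper's ordering: first the $2$-cycles of non-distinguished arcs, then $\Rop{\cycle_1},\simplecycle{\Arc_1},\Rop{\cycle_2},\simplecycle{\Arc_2},\dots,\Rop{\cycle_\mu},\simplecycle{\Arc_\mu}$.

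No cycle before $\Rop{\cycle_i}$ contains $\Arc_i$ or $\Rop{\Arc_i}$:
\begin{itemize}
\item earlier lifts do not, because $\Arc_i\notin\cycle_j$ for $j<i$;
\item earlier $2$-cycles do not, because they belong to arcs different from $\Arc_i$. Note $\Arc_j\neq\Arc_i$ for $j<i$, since $\Arc_j\in\cycle_j$ but $\Arc_i\notin\cycle_j$.
\end{itemize}
Hence $\Rop{\cycle_i}$ has a new arc. When $\simplecycle{\Arc_i}$ follows, the only earlier cycle meeting it is $\Rop{\cycle_i}$, which uses exactly one of $\Arc_i,\Rop{\Arc_i}$, so the other is new. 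The later lifts that caused your obstacle come after $\simplecycle{\Arc_i}$ and do not matter, so no choice of lift orientation is needed.
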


\begin{proof}
Since $B$ is weakly fundamental, there exists an ordering of the cycles \(\cyclebasis=\{\cycle_i\}_{i=1}^{\Dcyclerank}\), 
where \(\Dcyclerank \coloneqq \cyclerank{\digraph}\), such that for every \(1<k\leq \Dcyclerank\) 
there is an arc \(\Arc_k\in \cycle_k\) with \(\Arc_k\notin \bigcup_{i<k}\cycle_i\).  
Then also \(\RarcC{\Arc_k}{\cycle_k}\in\Rcycle_k\) and 
\(\RarcC{\Arc_k}{\cycle_k}\notin\bigcup_{i<k}\Rcycle_i\). 

An ordering that makes \(\Rcyclebasis\) weakly fundamental is therefore obtained by first listing 
the cycles in \(\simplecycles \setminus\{\simplecycle{\Arc_k}\mid k=1,\ldots,\Dcyclerank\}\), 
then \(\Rcycle_1\), and subsequently, for each \(k=1,\ldots,\Dcyclerank\), 
placing \(\Rcycle_k\) immediately before \(\simplecycle{\Arc_k}\).  
Since exactly one of \(\Arc_k\) or \(\Rop{\Arc_k}\) belongs to \(\Rcycle_k\), 
while both are contained in \(\simplecycle{\Arc_k}\), 
this ordering ensures that \(\Rcyclebasis\) is indeed weakly fundamental.

Verifying \eqref{eq:lifting-weight} and \eqref{eq:lifting-opt} is straightforward.
\end{proof}

To complete the reduction, we must prove the converse:
There is an optimal forward weakly fundamental cycle basis of \(\Rdigraph\) that is obtained as a lifting of an optimal weakly fundamental cycle basis of \(\digraph\). First, we prove that we can easily construct an optimal forward weakly fundamental cycle basis of \(\digraph\) that contains the cycles in \(\simplecycles\):

\begin{lemma}[Containment of \(\simplecycles\) in forward cycle bases]\label{lem:include-2cycles}
For every forward weakly fundamental cycle basis \(\cB\), we can find in polynomial time a forward weakly fundamental cycle basis \(\cyclebasis^*\) of \(\Rdigraph\)  such that \(\simplecycles \subset \cyclebasis^*\) and \(\weight(\cyclebasis^*) \leq \weight(\cyclebasis)\).
\end{lemma}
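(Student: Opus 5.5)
The plan is to repeatedly exchange one cycle of the current basis for a missing $2$-cycle $\simplecycle{\Arc}$, so that at each step the basis stays forward and weakly fundamental and the weight does not increase. Since each exchange adds one element of $\simplecycles$ and never removes one, at most $\card{\Darcs}$ exchanges are needed. Each exchange only scans the basis and its ordering once, so the procedure runs in polynomial time.

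Two preliminary facts drive everything. First, a \emph{weight bound}: if a directed cycle $\cycle$ of $\Rdigraph$ contains $\Arc=(\tail,\head)$ or $\Rarc$, then $\weight(\cycle)\geq \weight(\simplecycle{\Arc})=2\weight(\Arc)$. To see this, take the case $\Arc\in\cycle$ (the other is symmetric). The rest of $\cycle$ is a directed $\head$-$\tail$ path in $\Rdigraph$, and reversing arcs preserves weights, so that path corresponds to a $\head$-$\tail$ path in $\digraph$. Metricity of $\weight$ therefore gives it weight at least $\weight(\Arc)$.

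Second, a \emph{basis criterion}: any family of $\Rcyclerank$ cycles admitting an ordering in which each cycle has an arc not contained in the earlier ones is automatically linearly independent, because the coefficient matrix is triangular on those new arcs. Hence it is a weakly fundamental cycle basis. So after each exchange I only need to exhibit a suitable ordering and check that the number of cycles is still $\Rcyclerank$.

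For the exchange step, fix a weakly fundamental ordering $\cycle_1,\dots,\cycle_{\Rcyclerank}$ of the current forward basis with new arcs $\Arc_1,\dots,\Arc_{\Rcyclerank}$, and pick $\Arc\in\Darcs$ with $\simplecycle{\Arc}\notin\cyclebasis$. Since the basis spans $\simplecycle{\Arc}$, some cycle contains $\Arc$ or $\Rarc$.
\begin{itemize}
\item \emph{Case 1: some $j$ has $\Arc_j\in\{\Arc,\Rarc\}$.} Choose such a $j$ and replace $\cycle_j$ by $\simplecycle{\Arc}$ at position $j$. The arc $\Arc_j$ is still new at position $j$. For every later index, both $\Arc$ and $\Rarc$ now lie in the earlier union, and no later new arc can be one of them: at most one later index could have its new arc in $\{\Arc,\Rarc\}$, because once both arcs have appeared neither can be new again. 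Removing $\cycle_j$ only shrinks the earlier unions, which can only help later cycles.
\item \emph{Case 2: otherwise.} Let $k$ be the smallest index with $\cycle_k$ containing $\Arc$ or $\Rarc$, and replace $\cycle_k$ by $\simplecycle{\Arc}$ at position $k$. By minimality of $k$, neither $\Arc$ nor $\Rarc$ occurs before position $k$, so $\simplecycle{\Arc}$ has a new arc there. Later new arcs avoid $\{\Arc,\Rarc\}$ by the case assumption, so adding these two arcs to the unions does not hurt them, and again removing $\cycle_k$ only helps.
\end{itemize}
In both cases the removed cycle contains $\Arc$ or $\Rarc$, so the weight bound gives $\weight(\simplecycle{\Arc})\le$ the weight of the removed cycle. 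The removed cycle is not itself in $\simplecycles$, because the only $2$-cycle containing $\Arc$ or $\Rarc$ is $\simplecycle{\Arc}$, which was absent. Hence the number of $2$-cycles in the basis strictly increases and the weight does not increase.

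The main obstacle is the bookkeeping that keeps the ordering weakly fundamental, specifically the possibility that a later cycle's only new arc is exactly $\Arc$ or $\Rarc$. That is why the case distinction is needed: in Case 1 the exchanged cycle must be the one owning that new arc, not simply the first cycle meeting $\{\Arc,\Rarc\}$. I would double-check the claim that at most one index has its new arc in $\{\Arc,\Rarc\}$ after the position where the first of the two arcs appears, since the argument in Case 1 relies on it.
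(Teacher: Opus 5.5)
Your exchange strategy is sound, but Case~1 has a genuine gap exactly where you suspected one. The designated new arcs are distinct, so at most two indices have $e_j\in\{e,\Rop{e}\}$, and it can happen that $e_j=e$ and $e_q=\Rop{e}$ with $j<q$. Your remark that at most one index after the first appearance of $e$ or $\Rop{e}$ has its new arc in $\{e,\Rop{e}\}$ is true. But Case~1 needs that \emph{no} index after the swap position does. If you swap at the smaller index $j$, then $e_q\in S^{(e)}$ now sits before position $q$, and $C_q$ may be left without a new arc; the family can even become dependent.

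For an example, let $D$ have arcs $(u,v),(v,x),(x,u),(v,y),(y,u)$ with uniform weights. Write $(v_1,\dots,v_k)$ for the directed cycle $v_1\to\dots\to v_k\to v_1$ in $\Rdigraph$, and take the forward ordering
\[ S^{(vy)},\ (u,v,y),\ (u,v,x),\ (v,x,u,y),\ (v,u,y),\ S^{(vx)},\ S^{(xu)} \]
with new arcs $(v,y),(u,v),(v,x),(u,y),(v,u),(x,v),(u,x)$. This is a forward weakly fundamental basis, since $\Rcyclerank=7$. For $e=(u,v)$ both positions $2$ and $5$ qualify. Swapping at position~$2$ yields a family containing $S^{(e)}$, $(u,v,x)$, $(v,u,y)$ and $(v,x,u,y)$. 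Since $S^{(e)}=(u,v,x)+(v,u,y)-(v,x,u,y)$, the result is not even a basis.

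The repair is to choose $j$ as the \emph{largest} index with $e_j\in\{e,\Rop{e}\}$. Then no later new arc lies in $\{e,\Rop{e}\}$, and the rest of your Case~1 argument is correct. This is exactly the choice the paper makes (``the largest index $k$ such that $e_k$ is one of the two arcs of $S^{(e)}$''). Incidentally, your Case~2 never occurs: as the paper notes, the new arcs are the chords of a spanning tree of $\Rdigraph$, and that tree cannot contain both $e$ and $\Rop{e}$.

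With this fix your proof is a valid alternative route. You perform one local swap per missing $2$-cycle and use only the triangular independence criterion and the metric bound $w(C)\ge 2w(e)$. The paper instead extracts the forest $\cT$ once and replaces every $C_k$ simultaneously by a minimum-weight directed cycle through $e_k$ inside $\cT\cup\{e_i\}_{i\le k}$. That makes weak fundamentality immediate and may lower the weight further.
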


\begin{proof}
Let \(\cyclebasis\) be a forward weakly fundamental cycle basis of \(\Rdigraph\).  
We order the basis \(\cyclebasis=\{\cycle_k\}_{k=1}^{\Rcyclerank}\) such that for every index \(1<k\leq \Rcyclerank\)
there exists an arc \(\Arc_k\in\cycle_k\) with
\(\Arc_k\notin\bigcup_{i<k}\cycle_i\). Removing the chord set \(\{\Arc_k\}_{k=1}^{\Rcyclerank}\)
from \(\Rdigraph\) yields a spanning forest \(\tree\): If we remove $\Arc_{\Rcyclerank}$ from $\Rdigraph$, all weakly connected components remain connected, the cycle rank of the resulting graph drops by exactly one, and $\cyclebasis \setminus \{\cycle_\Rcyclerank\}$ is weakly fundamental cycle basis. Iterating this process, we ultimately arrive at a spanning forest \cite{Rizzi2009}.

By construction each cycle \(\cycle_k\) is
contained in \(\tree\cup\{\Arc_i\}_{i\le k} \). 
Let
\[
\minimalcyclesk \coloneqq \{\cycleo \in \forwardcycles(\Rdigraph) \mid \cycleo \subset \tree\cup\{\Arc_i\}_{i\le k} \text{ and } \Arc_k\in\cycleo\}
\]
be the family of all directed cycles that lie inside  \(\tree\cup\{\Arc_i\mid i\le k\}\) and contain \(\Arc_k\).
A forward weakly fundamental basis \(B^*\) can be obtained by choosing, for each \(k\), any directed cycle \(\cycleo_k\) of minimum weight in \(\minimalcyclesk\). In particular, \(\weight(\cycleo_k) \leq \weight(\cycle_k)\), and hence \(\weight(\cyclebasis^*) \leq \weight(\cyclebasis)\). \(\cycleo_k\) can be found by concatenating \(\Arc_k\) with a shortest path in \(\tree \cup {\{\Arc_i\}}_{i \leq k}\) from the head to the tail of \(\Arc_k\).

We show that we can choose \(\cycleo_k\) so that \(\simplecycles \subset \cyclebasis^*\). For each \(\Arc \in \Darcs\) we proceed as follows: Consider the corresponding \(2\)-cycle \(\simplecycle{\Arc}\) in \(\Rdigraph\).
Since \(\tree\) is a spanning tree of \(\Rdigraph\), it cannot contain both \(\Arc\) and its reverse \(\Rop{\Arc}\).
Consequently, at least one of these two opposite arcs coincides with a chord \(\Arc_j\).
Let \(k \geq j\) be the largest index such that \(\Arc_k\) is one of the two arcs of \(\simplecycle{\Arc}\).

By construction, we have
\[
\simplecycle{\Arc} \subseteq \tree \cup \{\Arc_i\}_{i \le k}
\quad\text{and}\quad
\Arc_k \in \simplecycle{\Arc},
\]
which implies \(\simplecycle{\Arc} \in \minimalcyclesk\).
Moreover, \(\simplecycle{\Arc}\) is a minimum-weight cycle in \(\minimalcyclesk\), and we can pick \(\cycleo_k = \simplecycle{\Arc} \). The resulting basis \(\cyclebasis^*\) is a forward weakly fundamental cycle basis satisfying
\(\simplecycles \subset \cyclebasis^*\), and  as claimed.
\end{proof}

\begin{lemma}[Projection to the original problem]\label{lem:projection} Let \(\cyclebasis^*\) be a forward weakly fundamental basis of \(\Rdigraph\) that contains  \(\simplecycles\). Then, removing the family \(\simplecycles\) from \(\cyclebasis^*\) and projecting each remaining directed cycle to its underlying cycle in \(\digraph\) yields a weakly fundamental cycle basis \(\cyclebasis\) of \(\digraph\). Moreover,
\begin{equation}\label{eq: cost identity}
   \weight(\cyclebasis^*) = \weight(\cyclebasis) +2\weight(\Darcs), 
\end{equation}
Furthermore, if \(\cyclebasis^*\) has minimum weight, so does \(\cyclebasis\).
 \end{lemma}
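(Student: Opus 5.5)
Let $\cyclebasis^* = \simplecycles \cup \{\cycleo_1,\dots,\cycleo_m\}$ with $m = \card{\Darcs} - \card{\Dvertices} + 1 = \mu(\digraph)$ by Observation~\ref{obs:two-way-cycle-rank}. Define the projection $\projection\colon\bR^{A(\Rdigraph)}\to\bR^{\Darcs}$ by $\projection(x)(\Arc)=x(\Arc)-x(\Rarc)$. It sends a directed cycle $\cycleo$ of $\Rdigraph$ to the incidence vector of its underlying cycle in $\digraph$, and it kills every $\simplecycle{\Arc}$. Two caveats apply. First, $\cycleo_k$ is not a 2-cycle (those all lie in $\simplecycles$). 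Second, $\cycleo_k$ cannot use both $\Arc$ and $\Rarc$, since that would repeat a vertex. Hence the underlying object is a genuine cycle $\cycle_k$ of $\digraph$, and $\weight(\cycle_k)=\weight(\cycleo_k)$ because $\weight(\Rarc)=\weight(\Arc)$. Summing over the $m$ cycles and adding $\weight(\simplecycles)=2\weight(\Darcs)$ gives \eqref{eq: cost identity} immediately.

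\textbf{Basis property.} The map $\projection$ is linear, maps $\cyclespace{\Rdigraph}{\bR}$ onto $\Dcyclespace$, and has kernel containing $\Span(\simplecycles)$. Surjectivity holds because $\projection(\Rcycle)=\pm\cycle$ for every cycle $\cycle$ of $\digraph$, as in Lemma~\ref{lem:cycle-basis-lifting}. Since $\cyclebasis^*$ spans $\cyclespace{\Rdigraph}{\bR}$, the images $\{\cycle_k\}$ span $\Dcyclespace$. There are exactly $m=\mu(\digraph)$ of them, so they form a basis.

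\textbf{Weak fundamentality.} This is the main obstacle. Take an ordering of $\cyclebasis^*$ witnessing weak fundamentality, with new arcs $f_1,f_2,\dots$, and restrict it to the $\cycleo_k$'s in their induced order. The goal is to show that each $\cycleo_k$ contains an arc $g$ such that neither $g$ nor its reverse lies in any earlier $\cycleo_j$. The underlying arc of $g$ is then new for $\cycle_k$ among $\cycle_1,\dots,\cycle_{k-1}$.

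The witness $f_k$ alone may fail, because its reverse could appear in an earlier $\cycleo_j$. To handle this, I would first reorder $\cyclebasis^*$ as in the proof of Lemma~\ref{lem:lifting-cost}: the $2$-cycles are pushed as late as possible. Concretely, I would show that any $\simplecycle{\Arc}$ can be moved after all the $\cycleo_k$'s, except possibly when it needs to precede them, and that in every case some pair $\{\Arc,\Rarc\}$ appears for the first time in $\cycleo_k$.

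A cleaner alternative uses the chord structure from the proof of Lemma~\ref{lem:include-2cycles}. Removing the chords $\{f_i\}$ leaves a spanning tree $\tree$ of $\Rdigraph$, with $\card{\Dvertices}-1$ arcs. The tree contains at most one arc of each pair, so exactly $\card{\Dvertices}-1$ pairs meet $\tree$, and the underlying arcs of $\tree$ form a spanning tree $T$ of $\digraph$. Each 2-cycle contributes a chord, using up one arc per pair. The $m$ remaining chords, those of the $\cycleo_k$, must therefore be exactly the arcs from pairs disjoint from $\tree$ — one extra per pair — and there are $m$ such pairs, namely the chords of $T$ in $\digraph$. So the witnesses $f$ of the $\cycleo_k$ have pairwise distinct underlying chords of $T$. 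Each $\cycle_k$ lies in $T$ plus those underlying chords whose $\cycleo$-witness came no later. Since pair $\{\Arc,\Rarc\}$ avoids $\tree$, it can only appear in the cycles whose witness index is at least that of its first chord. I expect this counting step to need the most care: matching witnesses to pairs requires that the chord of each $\cycleo_k$ be exactly the first arc of its pair to appear.

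\textbf{Optimality.} Suppose $\cyclebasis^*$ is optimal but some weakly fundamental basis $\cyclebasis'$ of $\digraph$ has $\weight(\cyclebasis')<\weight(\cyclebasis)$. By Lemma~\ref{lem:lifting-cost}, $\Rop{\cyclebasis'}$ is a forward weakly fundamental cycle basis of $\Rdigraph$ with
\[
\weight(\Rop{\cyclebasis'})=\weight(\cyclebasis')+2\weight(\Darcs)<\weight(\cyclebasis)+2\weight(\Darcs)=\weight(\cyclebasis^*),
\]
which contradicts the minimality of $\cyclebasis^*$.
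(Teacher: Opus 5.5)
Your cost identity, your spanning argument via the surjective projection $\projection$, and your optimality argument via Lemma~\ref{lem:lifting-cost} are all sound. The spanning argument is in fact more explicit than the paper, which leaves linear independence implicit in weak fundamentality. The gap is weak fundamentality itself, which is the substance of the lemma and which you never prove. Route (a) is only announced. Route (b) ends by conceding that it still needs the witness of each $\cycleo_k$ to be the first arc of its pair to appear. That is exactly the assertion that $\Rop{f_k}$ lies in no cycle preceding $\cycleo_k$, i.e.\ that $f_k$ alone already works --- the very thing you said ``may fail''. So the chord counting circles back to the unproven point. Your claim that $f_k$ may fail is in fact false.

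The missing idea is to apply the hypothesis $\simplecycles\subseteq\cyclebasis^*$ inside the ordering itself. The $2$-cycle $S\in\simplecycles$ on $\{f_k,\Rop{f_k}\}$ is a member of the weakly fundamental ordering, so it needs its own new arc.
\begin{itemize}
\item Since $f_k\in S$ and $f_k$ occurs in no cycle before $\cycleo_k$, the cycle $S$ must come after $\cycleo_k$.
\item Then $f_k$ is not new for $S$, so the new arc of $S$ must be $\Rop{f_k}$.
\item Hence $\Rop{f_k}$ occurs in no cycle preceding $S$, and in particular in no $\cycleo_j$ preceding $\cycleo_k$.
\end{itemize}
So neither $f_k$ nor $\Rop{f_k}$ appears before $\cycleo_k$, and the underlying arc of $f_k$ is new for $\cycle_k$ relative to all earlier projected cycles. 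This is the paper's argument, using the induced ordering directly. No reordering and no counting of tree pairs is required. Your counting is correct, but it only becomes conclusive once this observation is in place.
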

\begin{proof}

Consider
\(
\widetilde{\cB} \coloneqq \cyclebasis^* \setminus \simplecycles
\)
and define for each \(\widetilde\cycle\in\widetilde{\cB}\) the projection \(\proj(\widetilde\cycle) \coloneqq \cycle\in\cyclespace{\digraph}{\bR}\) by setting, for every \(\Arc\in\Darcs\),
\[
\cycle(\Arc)\coloneqq\widetilde\cycle(\Arc)\;-\;\widetilde\cycle(\Rop{\Arc}).
\]

Because \(\cyclebasis^*\) is weakly fundamental in \(\Rdigraph\) there exists an ordering of its cycles \(\{\widetilde \cycle_k\}_{k=1}^{\widetilde\mu}\) in which each cycle contains an arc $\widetilde\Arc_k$ not used by any earlier cycle. Defining $\cycle_i \coloneqq \proj(\widetilde\cycle_i)$, this induces an ordering of \(\cyclebasis \coloneqq \{\cycle_i\}_{i=1}^{\Dcyclerank}\).

We claim that for every \(k\), the arc \(\Arc_k \in \{\widetilde{\Arc}_k,\overset{\leftrightarrow}{\widetilde{\Arc}_k}\}\) with \(\Arc_k \in \cycle_k\) does not belong to any cycle \(\cycle_i\) with \(i<k\). Indeed, by construction, \(\widetilde{\Arc}_k\) is not contained in any cycle preceding \(\widetilde{\cycle}_k\) in the original ordering of \(\cyclebasis^*\); hence no earlier \(\widetilde{\cycle}_i\) contains the same directed arc. In particular, the cycle \(\simplecycle{\Arc_k}\) appears after \(\widetilde{\cycle}_k\) in the ordering. But then also any cycle in $\cyclebasis^*$ containing $\overset{\leftrightarrow}{\widetilde{\Arc}_k}$ can only appear later than $\widetilde\cycle_k$. 
We conclude that $\cyclebasis$ is weakly fundamental in \(\digraph\), as required.

Equation \eqref{eq: cost identity} follows from the fact that the weights of the cycles in \(\cyclebasis\) are equal to the weights of the cycles in \(\widetilde\cyclebasis\) and that \(\weight(\simplecycles)= 2\weight(\Darcs)\). Then \eqref{eq: cost identity}  implies that 
\begin{equation}
    \MWFFCB{\Rdigraph} \geq \MWFCB{\digraph} +2\weight(\Darcs)
\end{equation}
and together with Lemma \ref{lem:lifting-cost}, this implies that the inequality is actually an equality. Therefore, if \(\cyclebasis^*\) is minimum, then \(\cyclebasis\) is a minimum-weight weakly fundamental cycle basis of \(\digraph\).
\end{proof}
\begin{theorem}[APX-hardness]\label{thm:APX-hard}
The minimum-weight forward weakly fundamental cycle basis problem is APX-hard. The APX-hardness persists when the weights are metric or uniform.
\end{theorem}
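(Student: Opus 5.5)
We prove the theorem with an L-reduction from the minimum-weight weakly fundamental cycle basis problem on digraphs with metric (in particular uniform) weights. Rizzi \cite{Rizzi2009} showed this source problem is APX-hard. The instance map is \(\digraph \mapsto \Rdigraph\), with each reverse arc \(\Rarc\) given the weight \(\weight(\Arc)\). This map is clearly polynomial, and every weakly connected component of \(\Rdigraph\) is strongly connected. Hence by Proposition~\ref{prop: weakly fundamental forward cycle basis} a forward weakly fundamental cycle basis always exists, and the target instance is feasible.

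\textbf{Solution map.} Given a forward weakly fundamental cycle basis \(\cyclebasis'\) of \(\Rdigraph\), we first apply Lemma~\ref{lem:include-2cycles} to obtain \(\cyclebasis^*\supseteq\simplecycles\) with \(\weight(\cyclebasis^*)\le\weight(\cyclebasis')\). We then apply Lemma~\ref{lem:projection} to obtain a weakly fundamental cycle basis \(\cyclebasis\) of \(\digraph\) with \(\weight(\cyclebasis)=\weight(\cyclebasis^*)-2\weight(\Darcs)\). Both steps run in polynomial time.

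\textbf{The \(\beta\)-condition.} The proof of Lemma~\ref{lem:projection}, combined with Lemma~\ref{lem:lifting-cost}, gives
\[
\MWFFCB{\Rdigraph}=\MWFCB{\digraph}+2\weight(\Darcs).
\]
Therefore
\[
\weight(\cyclebasis)-\MWFCB{\digraph}=\weight(\cyclebasis^*)-\MWFFCB{\Rdigraph}\le \weight(\cyclebasis')-\MWFFCB{\Rdigraph}.
\]
Both sides are nonnegative, so \eqref{eq: beta bound} holds with \(\beta=1\).

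\textbf{The \(\alpha\)-condition.} This is the main obstacle, since the additive constant \(2\weight(\Darcs)\) must be dominated by a multiple of \(\MWFCB{\digraph}\). The idea is that every arc lies in some basis cycle. We may assume without loss of generality that \(\digraph\) is 2-connected, or at least that it has no bridges: bridges lie in no cycle, can be deleted, and contribute nothing to either optimum. Then every arc \(\Arc\) lies in the support of some cycle of any cycle basis, because the cycles of a basis span the cycle space and every non-bridge arc is in some cycle. Summing over the basis,
\[
\weight(\Darcs)\le \sum_{\cycle\in\cyclebasis}\weight(\cycle)=\weight(\cyclebasis)
\]
for every cycle basis \(\cyclebasis\), so \(\weight(\Darcs)\le \MWFCB{\digraph}\). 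Consequently
\[
\MWFFCB{\Rdigraph}=\MWFCB{\digraph}+2\weight(\Darcs)\le 3\,\MWFCB{\digraph},
\]
which gives \eqref{eq: alpha bound} with \(\alpha=3\).

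\textbf{Bridges and weight classes.} We must check that removing bridges is harmless on both sides. Bridges of \(\digraph\) yield 2-cycles \(\simplecycle{\Arc}\) in \(\Rdigraph\), and these are already accounted for in the exact identity above, so it is simplest to preprocess \(\digraph\) by contracting or deleting its bridges. Finally, the construction preserves the relevant weight classes. Uniform weights on \(\digraph\) give uniform weights on \(\Rdigraph\). Metric weights on \(\digraph\) stay metric on \(\Rdigraph\): any directed path in \(\Rdigraph\) projects to an undirected path of equal weight in \(\digraph\), and the reverse arc has the same weight as the original. Since Rizzi's hardness holds for uniform weights, APX-hardness persists in the metric and uniform cases.
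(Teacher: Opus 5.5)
Your proposal is correct and matches the paper's proof essentially step for step. You use the same L-reduction $\digraph \mapsto \Rdigraph$, the same solution map through Lemma~\ref{lem:include-2cycles} and Lemma~\ref{lem:projection} giving $\beta=1$, and the same bound $\alpha=3$ from the fact that every arc lies in some basis cycle. The only differences are minor: you delete bridges where the paper assumes 2-connectivity, which suffices for the $\alpha$-bound, and you explicitly check that uniform and metric weights carry over to $\Rdigraph$.
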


\begin{proof}
Given an instance \((\digraph,\weight)\) of the minimum-weight weakly fundamental cycle basis problem with metric/uniform weights, we construct \((\Rdigraph,\widetilde\weight)\) with metric/uniform weights in polynomial time. Due to Observation~\ref{obs:blocks}, we can assume that $\digraph$ is 2-connected. By Lemma~\ref{lem:projection}, we have the following cost relation:
\begin{equation}\label{eq: obj relation}
    \MWFFCB{\Rdigraph} = \MWFCB{\digraph} + 2\weight(\Darcs).
\end{equation}
Since every arc in a 2-connected graph is contained in at least one cycle of a cycle basis, we have \(2\weight(\Darcs) \leq 2\MWFCB{\digraph}\). Hence, for $\alpha = 3$, we find \(\MWFFCB{\Rdigraph} \leq \alpha\MWFCB{\digraph}\).

Furthermore, given a forward weakly fundamental cycle basis \(\Rcyclebasis\) of \(\Rdigraph\), by Lemma~\ref{lem:include-2cycles} we can find a forward weakly fundamental cycle basis \(\cyclebasis^*\) of smaller or equal weight containing \(\simplecycles\). By Lemma~\ref{lem:projection}, we can then obtain a weakly fundamental cycle basis \(\cyclebasis\) of \(\digraph\) in polynomial time such that
\(\weight(\cyclebasis) = \weight(\cyclebasis^*) - 2\weight(\Darcs) \leq \weight(\Rcyclebasis) - 2\weight(\Darcs)\). Subtracting \(\MWFCB{\digraph}\) from both sides and substituting Equation~\eqref{eq: obj relation}, it directly follows that:
\[ \weight(\cyclebasis) - \MWFCB{\digraph} \leq \weight(\Rcyclebasis) - \MWFFCB{\Rdigraph}. \] Thus Equation~\eqref{eq: beta bound} holds with \(\beta = 1\), and our reduction is indeed an L-reduction of the minimum-weight forward weakly fundamental cycle basis problem from the minimum weakly fundamental cycle basis problem, which is APX-hard for uniform weights on 2-connected graphs \cite{Rizzi2009}.
\end{proof}

For completeness, we finally show that the assumption that \((\digraph,\weight)\) has metric weights is necessary for our reduction to work. We present a counterexample for which Lemma~\ref{lem:include-2cycles} and Lemma~\ref{lem:projection} do not hold.
\begin{example}
Consider the digraph $\digraph$ with \(\Dvertices = \{a,b,c\}\) and \(\Darcs = \{(a,b),(b,c),(c,a)\}\) with weights 
\(\weight((a,b)) = \weight((b,c)) \coloneqq 1\) and \(\weight((c,a)) = 3\).  
In \(\digraph\) there exists a unique cycle \(\cycle\); in particular,  
\(\cyclebasis \coloneqq \{\cycle\}\) forms the minimum-weight weakly fundamental cycle basis of \(\digraph\).

However, \(\Rcyclebasis = \{\Rcycle\} \cup \simplecycles\) is not a minimum-weight forward weakly fundamental cycle basis of \(\Rdigraph\), since it has weight \(15\), while the forward weakly fundamental cycle basis 
\begin{align*}
\cyclebasis' &\coloneqq \{\simplecycle{(a,b)}, \simplecycle{(b,c)}, \cycle', \cycle''\}, \quad \text{ where }\\
\cycle' &= a,(a,b),b,(b,c),c,(c,a),a \quad \text{ and }\\
\cycle'' &= a,(a,c),c,(c,b),b,(b,a),a    
\end{align*}
is optimal and has weight \(14\).
Thus, Lemma~\ref{lem:projection} does not hold for non-metric weights, as \(\Rcyclebasis\) is not an optimal forward weakly fundamental cycle basis of \(\Rdigraph\). Moreover, Lemma~\ref{lem:include-2cycles} does not hold, as any forward cycle basis of $\Rdigraph$ containing all three cycles in $\simplecycles$ must have weight at least 15.

\end{example}

\section{Integrality of Minimum Cycle Bases}\label{sec:integrality of minimum cycle bases}

While the exact computational complexity of the minimum integral cycle basis problem remains an open question for general graphs, it is polynomial-time solvable for planar graphs. In particular, any algorithm that computes a minimum cycle basis over the subset of isometric cycles yields an integral cycle basis, since isometric cycle bases are always integral in planar graphs~\cite{Rizzi2009}. This raises a natural structural question: under what general conditions does a minimum directed cycle basis algorithm inherently guarantee integrality?

More formally, we address the following problem: for which graphs does a minimum cycle basis algorithm yield an integral cycle basis under \emph{any} possible weight assignment?
\begin{definition}[\mcbeq/\notmcbeq]\label{def:opt-in}
We say that a graph is \emph{\mcbeq}\ (\text{OPTimal INteger}) if, for every edge weight assignment, the weight of a minimum directed cycle basis equals the weight of a minimum integral cycle basis. Conversely, a graph that is not \mcbeq\ is called \emph{\notmcbeq}.
\end{definition}

\begin{observation}\label{obs:undirected-minors}
    The \mcbeq{}/\notmcbeq{} notion does not depend on the orientation of the arcs, so that this in fact a property of the underlying undirected graph: The cycle spaces of two different orientations are isomorphic, and the cycle basis determinant remains the same up to sign \cite{cyclebasisintro}
\end{observation}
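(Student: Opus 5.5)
The plan is to exhibit an explicit isomorphism between the cycle spaces of two orientations that carries cycles to cycles, preserves weights, and preserves both the linear-algebraic and the integrality structure. It suffices to treat two orientations $\digraph$ and $\digraph'$ of the same undirected graph that differ in the direction of a single arc $\Arc$; the general case then follows by reversing arcs one at a time.

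\textbf{Step 1: the map.} Let $\phi:\bR^{\Darcs}\to\bR^{\Arcs{\digraph'}}$ be the diagonal map that negates the coordinate of $\Arc$ and fixes all other coordinates. It is an involutive linear isomorphism. Reversing $\Arc$ swaps the roles of $\outn{v}$ and $\inn{v}$ for the two endpoints of $\Arc$ only. The flow conservation constraints defining $\cyclespace{\digraph}{\bR}$ are therefore mapped exactly onto those defining $\cyclespace{\digraph'}{\bR}$, so $\phi$ restricts to an isomorphism of cycle spaces.

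Moreover, for a cycle $\cycle$ (viewed as an undirected closed walk), the incidence vector in $\digraph'$ is precisely $\phi$ of its incidence vector in $\digraph$. This holds because the sign of the $\Arc$-coordinate records only whether the traversal agrees with the chosen orientation of $\Arc$. Hence $\phi$ induces a bijection between cycles of $\digraph$ and cycles of $\digraph'$ with identical supports. In particular it preserves $\weight(\cycle)$, since weights are defined on the underlying edges.

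\textbf{Step 2: transfer of bases.} Since $\phi$ is linear and bijective, a set $\cyclebasis$ of cycles is a basis of $\cyclespace{\digraph}{\bR}$ if and only if $\phi(\cyclebasis)$ is a basis of $\cyclespace{\digraph'}{\bR}$. For integrality, observe that $\phi$ and $\phi^{-1}=\phi$ have integer matrices. So if every cycle of $\digraph$ is an integer combination of $\cyclebasis$, then applying $\phi$ shows that every cycle $\phi(\cycle)$ of $\digraph'$ is an integer combination of $\phi(\cyclebasis)$. By the bijection of Step 1 this covers all cycles of $\digraph'$, and the converse direction holds by symmetry.

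Equivalently, in the determinant language of \cite{cyclebasisintro}: the cycle matrix changes by multiplication with the diagonal $\pm1$ matrix $\phi$. Every maximal minor therefore changes only by sign, and integrality, which is characterized by the relevant determinant having absolute value $1$, is preserved.

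\textbf{Step 3: conclusion.} For any weight function, $\phi$ gives a weight-preserving bijection between directed cycle bases of $\digraph$ and of $\digraph'$, which restricts to a bijection between integral cycle bases. Hence the minimum weight of a directed cycle basis and the minimum weight of an integral cycle basis are each the same for both orientations. Consequently $\digraph$ is \mcbeq\ if and only if $\digraph'$ is.

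There is no real obstacle. The only point needing care is Step 1, namely checking that incidence vectors correspond exactly under $\phi$, including the sign convention for arcs traversed backwards. A second point is that parallel arcs and 2-cycles are handled in the same way, because the argument is coordinatewise.
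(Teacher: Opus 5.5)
Your proposal is correct and follows the paper's route. The paper justifies the observation in one line: the cycle spaces of different orientations are isomorphic, and the cycle basis determinant changes only by sign. Your coordinate sign-flip map $\phi$, the matching of incidence vectors, and the transfer of integer combinations (or, equivalently, of determinants) fill in the details of exactly that argument.
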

While Definition~\ref{def:opt-in} might initially appear slightly decoupled from the algorithmic question posed at the beginning of this section, the two are closely related: if a minimum directed cycle basis algorithm always returns an integral cycle basis on a given graph, then the weight of a minimum directed cycle basis and the weight of a minimum integral cycle basis necessarily coincide, so the graph is \mcbeq. The converse that being \mcbeq\ in turn forces such algorithmic behavior is established by the following proposition.
\begin{proposition}\label{prop: MICBP in optin is in P}
Let \(\digraph\) be an \mcbeq\ graph. Any greedy minimum cycle basis algorithm
(e.g.\ Horton's or de Pina's algorithm) that breaks weight ties according to a
fixed lexicographic order on the arcs returns an integral cycle basis. In
particular, the minimum integral cycle basis problem on \mcbeq\ graphs can be
solved in polynomial time by such an algorithm.
\end{proposition}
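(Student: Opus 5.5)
The plan is to reduce tie-breaking to a generic perturbation of the weights, for which the minimum directed cycle basis is unique, and then use the \mcbeq\ property for the perturbed weights. Fix an enumeration $\Arc_1,\ldots,\Arc_m$ of $\Darcs$ that realizes the fixed lexicographic order. Given $\weight$, define for $\varepsilon>0$
\[
\weight_\varepsilon(\Arc_i) \coloneqq \weight(\Arc_i) + \varepsilon\, 2^{-i}.
\]
Then $\weight_\varepsilon(\cycle)=\weight(\cycle)+\varepsilon\sum_{\Arc_i\in\cycle}2^{-i}$. The perturbation term is an injective function of the support, and distinct cycles have distinct supports. Hence for all sufficiently small $\varepsilon>0$ (smaller than the minimum positive gap between $\weight$-weights of cycles, divided by $m$), two things hold. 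First, distinct cycles have distinct $\weight_\varepsilon$-weights. Second, the order of cycles by $\weight_\varepsilon$ is exactly the order by $\weight$ with ties broken lexicographically by support. I will take this second property as the meaning of the hypothesis: the greedy algorithm is assumed to compare cycles by $\weight$ and to break ties by this lexicographic comparison of supports. Pinning down that this is what lexicographic tie-breaking in Horton's and de Pina's algorithms amounts to is the step I expect to need the most care.

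Next I use the matroid structure. Cycle bases of $\cyclespace{\digraph}{\bR}$ are the bases of the linear matroid on the set of cycles. Because $\weight_\varepsilon$ is injective on cycles, this matroid has a unique minimum-weight basis $\cyclebasis_\varepsilon$. For Horton's algorithm, it greedily scans a candidate set (the Horton set) that contains a minimum basis. With injective weights this candidate set must contain $\cyclebasis_\varepsilon$ itself. I would verify this by rerunning Horton's exchange argument with $\weight_\varepsilon$, or more simply by noting that greedy over any subset containing some minimum basis returns the minimum basis, which is unique. For de Pina's algorithm, its correctness proof shows that the output is a minimum basis, so it is again $\cyclebasis_\varepsilon$. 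By the first paragraph, running either algorithm on $\weight$ with lexicographic tie-breaking makes exactly the same choices as running it on $\weight_\varepsilon$, so the output is $\cyclebasis_\varepsilon$.

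Finally I apply the \mcbeq\ property. Since $\digraph$ is \mcbeq, for the weight $\weight_\varepsilon$ there is an integral cycle basis $\cyclebasis_I$ whose weight equals the minimum directed cycle basis weight. Every integral basis is a directed basis, so $\cyclebasis_I$ is a minimum directed basis for $\weight_\varepsilon$. By uniqueness, $\cyclebasis_I=\cyclebasis_\varepsilon$, and therefore the algorithm's output is integral.

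It remains to check that this output is also optimal for $\weight$. As $\varepsilon\to0$ we have $\weight_\varepsilon(\cyclebasis)\to\weight(\cyclebasis)$ uniformly over the finitely many bases, and a $\weight_\varepsilon$-minimum basis is $\weight$-minimum for small $\varepsilon$. Its $\weight$-weight therefore equals the minimum directed weight, which is at most the minimum integral weight. Hence it is a minimum integral cycle basis for $\weight$. Polynomial running time is inherited from Horton's and de Pina's algorithms. The tie-breaking costs only a lexicographic comparison of supports, so we never need to compute with $\varepsilon$ explicitly.
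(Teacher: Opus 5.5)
Your proposal is correct and follows the paper's own route. You perturb the weights so that cycle weights become injective and their order matches $w$-order with lexicographic tie-breaking. You conclude that the perturbed minimum directed basis is unique and hence, by \mcbeq-ness, integral, and that the tie-broken algorithm on $w$ reproduces exactly this basis. Your perturbation $\varepsilon 2^{-i}$, with $\varepsilon$ taken sufficiently small, is a slightly cleaner version of the paper's decimal-digit perturbation $10^{-k-m+i}$: it works for arbitrary real weights and needs no bookkeeping of decimal exponents.
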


\begin{proof}

Let \(\digraph\) be an \mcbeq\ graph. We begin by perturbing the arc weights so that the weight of a cycle uniquely determines the set of arcs composing it.

Let \(\weight \in \mathbb{Q}^{\Arcs{\digraph}}\) be a finite-length weight function, and fix an ordering \(\Arcs{\digraph}=\{\Arc_1,\ldots,\Arc_m\}\). For each arc \(\Arc_i\), define the perturbed weight

\[
\tweight(\Arc_i) \coloneqq \weight(\Arc_i) + \varepsilon_i,
\qquad
\varepsilon_i \coloneqq 10^{-k-m+i},
\]

where \(k-1\) is the smallest (most negative) decimal exponent appearing in the representation of \(\weight\).

With this choice, the weight of any cycle \(\cycle\) uniquely encodes its arc incidence: the digit in position \(-k-m+i\) is equal to \(1\) if and only if \(\Arc_i \in \cycle\). Hence, \(\tweight(\cycle)\) uniquely identifies \(\cycle\).

Consider the perturbed minimum directed cycle basis problem \((\digraph,\tweight)\). The minimum cycle basis is unique. Indeed, any optimal cycle basis has the same nondecreasing sequence of cycle weights \cite{cyclebasisintro}, and under the perturbation each weight corresponds to a unique cycle. Therefore, the weight sequence identifies a unique cycle basis \(\cyclebasis\). Since \(\digraph\) is \mcbeq, \(\cyclebasis\) must be an integral cycle basis.

We now run a minimum directed cycle basis algorithm on \((\digraph,\weight)\), breaking ties lexicographically according to the vectors
\[
\bigl(\weight(\cycle), (\cycle_i)_{i=1}^m\bigr),
\]
where \(\cycle_i \in \{0, 1\}\) denotes the incidence of \(\Arc_i\) in \(\cycle\), as described in \cite{cyclebasisintro}. We claim that comparing two cycles \(\cycle,\cycle'\) of \(\digraph\) under \(\tweight\) is equivalent to comparing them under \(\weight\) with ties broken lexicographically by incidence vector. Indeed, if \(\weight(\cycle)\neq\weight(\cycle')\), then since \(\sum_{i=1}^m \varepsilon_i < 10^{-k+1}\), the perturbation cannot exceed the gap \(|\weight(\cycle)-\weight(\cycle')| \geq10^{-k+1}\), so \(\tweight(\cycle)<\tweight(\cycle')\) if and only if \(\weight(\cycle)<\weight(\cycle')\). If instead \(\weight(\cycle)=\weight(\cycle')\), then \(\tweight(\cycle)-\tweight(\cycle') = \sum_{i:\,\Arc_i\in\cycle}\varepsilon_i - \sum_{i:\,\Arc_i\in\cycle'}\varepsilon_i\), whose sign is determined by the most significant index \(i\) at which \(\cycle\) and \(\cycle'\) disagree, which is exactly the lexicographic comparison of their incidence vectors.

The resulting cycle basis $\cyclebasis$ is then also optimal for \((\digraph,\tweight)\). By the above, $\cyclebasis$ is integral.
\end{proof}

All planar graphs are \mcbeq. In fact, \mcbeq\ graphs share another important property with planar graphs: the class is minor-closed.

\begin{theorem}\label{thm: optin is minor closed}
If a graph \(\digraph\) is \mcbeq, then any minor of \(\digraph\) is also \mcbeq.
\end{theorem}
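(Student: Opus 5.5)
The approach is the contrapositive. Every minor arises from a sequence of edge deletions, edge contractions and isolated-vertex deletions. Isolated vertices do not affect the cycle space, and by Observation~\ref{obs:undirected-minors} orientations are irrelevant. So it suffices to show: if $\digraph - \Arc$ or $\digraph/\Arc$ is \notmcbeq\ for some arc $\Arc$, then so is $\digraph$. Throughout, write $\mathrm{MCB}(H,\weight)$ and $\mathrm{MIB}(H,\weight)$ for the minimum weights of a directed and of an integral cycle basis of $H$. \notmcbeq\ means there is $\weight$ with $\mathrm{MCB}<\mathrm{MIB}$. In each case I extend such a weight on the minor to a weight on $\digraph$, giving $\Arc$ either a huge or a zero weight.

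\emph{Deletion.} Let $\weight$ witness that $\digraph-\Arc$ is \notmcbeq. If $\Arc$ is a bridge, the two cycle spaces coincide and there is nothing to do. Otherwise set $\weight(\Arc)\coloneqq M$ with $M$ larger than the total weight of all other arcs. Then every directed cycle basis of $\digraph$ contains at least one cycle through $\Arc$. Since $\mu(\digraph)=\mu(\digraph-\Arc)+1$, a minimum basis contains exactly one such cycle, and the others form a basis of $\cyclespace{\digraph-\Arc}{\bR}$. Taking a minimum basis of $\digraph-\Arc$ together with a shortest cycle through $\Arc$ gives
\[
\mathrm{MCB}(\digraph,\weight)=\mathrm{MCB}(\digraph-\Arc,\weight)+M+d,
\]
where $d$ is the minimum weight, excluding $\Arc$, of a cycle through $\Arc$.

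Conversely, let $\cyclebasis$ be a minimum integral basis of $\digraph$. It contains exactly one cycle $\cycle_\Arc$ through $\Arc$, and $\cycle_\Arc(\Arc)=\pm1$. Write a cycle of $\digraph-\Arc$ as an integer combination of $\cyclebasis$ and evaluate at $\Arc$: the coefficient of $\cycle_\Arc$ is $0$. Hence $\cyclebasis\setminus\{\cycle_\Arc\}$ is an integral basis of $\digraph-\Arc$, and
\[
\mathrm{MIB}(\digraph,\weight)\ge \mathrm{MIB}(\digraph-\Arc,\weight)+M+d>\mathrm{MCB}(\digraph,\weight).
\]

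\emph{Contraction.} Let $\weight$ witness that $\digraph/\Arc$ is \notmcbeq, and set $\weight(\Arc)\coloneqq 0$. The coordinate projection $\pi$ that forgets $\Arc$ maps $\cyclespace{\digraph}{\bR}$ isomorphically onto $\cyclespace{\digraph/\Arc}{\bR}$, because the $\Arc$-coordinate is forced by flow conservation at an endpoint of $\Arc$. This isomorphism also maps integer vectors bijectively onto integer vectors, and it preserves weight because $\weight(\Arc)=0$. Every cycle of $\digraph/\Arc$ lifts, by reinserting $\Arc$ if necessary, to a cycle of $\digraph$ with the same image. Therefore any basis of $\digraph/\Arc$ lifts to a basis of $\digraph$ of the same weight, and integral bases lift to integral ones. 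In particular
\[
\mathrm{MCB}(\digraph)\le \mathrm{MCB}(\digraph/\Arc).
\]
It remains to show $\mathrm{MIB}(\digraph)\ge\mathrm{MIB}(\digraph/\Arc)$.

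Let $\cyclebasis$ be a minimum integral basis of $\digraph$. Then $\pi(\cyclebasis)$ generates the full integer lattice of $\cyclespace{\digraph/\Arc}{\bR}$. However, the image of a cycle that passes through both endpoints of $\Arc$ without using $\Arc$ is not a cycle. It is a conformal sum $\cycle_1+\cycle_2$ of two edge-disjoint cycles, with weight $\weight(\cycle_1)+\weight(\cycle_2)$.

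This is the main obstacle: I must replace each such image by a single cycle without increasing the weight and while keeping the generated lattice. My first attempt is an exchange argument. Over $\bZ$, write $\cycle_1=a\,\pi(\cycle)+v$ with $v$ in the lattice generated by the remaining images. Then $\cycle_2$ has coefficient $1-a$ on $\pi(\cycle)$. Substituting $\cycle_1$ or $\cycle_2$ for $\pi(\cycle)$ preserves the lattice exactly when that coefficient is $\pm1$, and the weight does not increase since both pieces are lighter than $\pi(\cycle)$.

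If $a\notin\{-1,0,1,2\}$ this fails. In that case I would switch to a perturbation, giving $\Arc$ a tiny positive weight $\varepsilon$ in place of $0$. Then the lifts of cycles of $\digraph/\Arc$ become the only near-optimal candidates in $\digraph$, and I would combine this with the uniqueness of perturbed minimum bases, as in the proof of Proposition~\ref{prop: MICBP in optin is in P}, to rule out the degenerate images. As a fallback, I would argue via determinants: an integral basis has determinant $\pm1$, and I would show directly that the greedy (Horton-type) choice among the pieces $\cycle_1,\cycle_2$ preserves this.
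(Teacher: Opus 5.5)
There is a genuine gap in the contraction case. Everything rests on $\mathrm{MIB}(\digraph)\ge\mathrm{MIB}(\digraph/\Arc)$. Proving that would require controlling \emph{arbitrary} minimum integral bases of $\digraph$, and these may contain cycles through both endpoints $u,v$ of $\Arc$ that avoid $\Arc$. Your exchange inside the lattice needs the coefficient to be $\pm1$, and it genuinely fails otherwise. The $\varepsilon$-perturbation and determinant fallbacks are only announced, not carried out, so this step is unproved.

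The missing idea is that you never need to handle integral bases that are not minimum directed bases. Take a minimum integral basis $\cyclebasis$ of $\digraph$.
\begin{itemize}
\item If $\cyclebasis$ is not a minimum directed basis, then $\mathrm{MIB}(\digraph)>\mathrm{MCB}(\digraph)$ and you are done.
\item If it is, do the exchange over $\bR$ instead of over $\bZ$. Write a bad cycle as $\cycle=\cycle_1+\cycle_2$, where $\cycle_1,\cycle_2$ are the two $u$-$v$ subpaths closed by $\Arc$. At least one of $\cycle_1,\cycle_2$ can replace $\cycle$ and still give a basis of the real cycle space, with no unimodularity needed (Horton's exchange, as in Lemma~\ref{lem:chordless-strict-metric}).
\end{itemize}
You may assume the witness weight on $\digraph/\Arc$ is strictly positive: $\mathrm{MCB}<\mathrm{MIB}$ survives adding $\delta\mathbf 1$ for small $\delta>0$, since both are minima of finitely many linear functions. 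With $\weight(\Arc)=0$, the exchange then strictly lowers the weight, contradicting minimality. Hence $\cyclebasis$ has no bad cycle, and $\pi(\cyclebasis)$ is an integral cycle basis of $\digraph/\Arc$. Its weight is $\mathrm{MCB}(\digraph)\le\mathrm{MCB}(\digraph/\Arc)<\mathrm{MIB}(\digraph/\Arc)$, a contradiction.

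This is essentially the paper's route. It argues only about minimum directed cycle bases, and uses $\weight(\Arc)=0$ to show they contain no cycle through $u$ and $v$ avoiding $\Arc$. It then transports them by contraction to minimum bases of $\digraph/\Arc$ with the same weight and determinant.

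A smaller point in the deletion case: with $M$ merely exceeding the total weight of the other arcs, your claim that a minimum \emph{integral} basis contains exactly one cycle through $\Arc$ does not follow. The threshold argument you used for directed bases relies on the matroid structure. A basis with two cycles through $\Arc$ costs at least $2M$, but $\mathrm{MIB}(\digraph-\Arc)$ is not bounded by $M$, so counting alone does not exclude it. The same dichotomy repairs this: every minimum directed basis has exactly one cycle through $\Arc$, so such a basis is not a minimum directed basis, and its weight already exceeds $\mathrm{MCB}(\digraph,\weight)$. Alternatively, choose $M$ larger than $\mu(\digraph)$ times the total weight. The rest of your deletion argument matches the paper: evaluating at $\Arc$ shows that removing the unique cycle through $\Arc$ leaves an integral basis of $\digraph-\Arc$.
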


\begin{proof}
It suffices to show that if a \notmcbeq\ graph is obtained from a graph
\(\digraph\) by edge deletion, vertex deletion, or edge contraction, then
\(\digraph\) is also \notmcbeq.

We first consider edge deletion. Let \(\Arco \in \Darcs\) and define
\(\odigraph \coloneqq \digraph \setminus \Arco\).
If \(\odigraph\) is \notmcbeq, there exists a weight assignment
\(\oweight \in \bR^{\Arcs{\odigraph}}\) such that no directed minimum cycle basis of
\(\odigraph\) is integral.
Define a weight assignment \(\weight\) on \(\digraph\) by setting
\(\weight(\Arc) \coloneqq \oweight(\Arc)\) for all
\(\Arc \in \Arcs{\odigraph}\), and choosing \(\weight(\Arco)\) to be strictly
larger than the maximum weight of any cycle appearing in a minimum cycle basis
of \(\odigraph\).

With this choice of weights, the greedy algorithm for computing a minimum cycle
basis of \(\digraph\) necessarily selects a cycle basis entirely contained in
\(\odigraph\), since any cycle using \(\Arco\) is too expensive.
After these cycles are chosen, the algorithm completes the basis by adding a
minimum-weight cycle containing \(\Arco\).
Because the resulting minimum cycle basis of \(\digraph\) contains a non-integral set of linearly independent cycles coming from
\(\odigraph\), it cannot be integral.

If \(\vertex\) is an isolated vertex of \(\digraph\) and
\(\digraph \setminus \vertex\) is \notmcbeq, then trivially \(\digraph\) is also
\notmcbeq.
Since vertex deletion can be realized as a sequence of deletions of incident
edges followed by the deletion of a disjoint vertex, the claim also holds for
vertex deletion.

Finally, consider edge contraction.
Let \(\Arco =(u,v) \in \Darcs\) and define
\(\odigraph \coloneqq \quotient{\digraph}{\Arco}\).
Assume that \(\odigraph\) is \notmcbeq.
Extend any weight assignment on \(\odigraph\) to \(\digraph\) by setting
\(\weight(\Arco) \coloneqq 0\).
Then any minimum cycle basis \(\cyclebasis\) of \(\digraph\) induces a cycle
basis
\[
\ocyclebasis \coloneqq
\bigl\{ \quotient{\cycle}{\Arco} \mid \cycle \in \cyclebasis \bigr\}
\]
of \(\odigraph\)  of equal weight. In general the quotient of a cycle \(\quotient{\cycle}{\Arco}\) is not a cycle if \(u,v \in \cycle\) but \((u,v)= \Arco \notin \Arcs{\cycle}\). However, since \(\weight(\Arco)=0\), any cycle in a minimum cycle basis containing both vertices \(u\) and \(v\) must also contain \(\Arco\). So \(\quotient{\cycle}{\Arco}\) is indeed a cycle for any \(\cycle\) in a minimum cycle basis of \(\digraph\).

Moreover, this operation preserves the determinant of the cycle basis.
Indeed, the set of chords of \(\odigraph\) corresponding to a spanning tree
\(\otree\) coincides with the set of chords of \(\digraph\) corresponding to the
spanning tree \(\tree \coloneqq \otree \cup \{\Arco\}\).
Thus there is a bijection between minimum cycle bases of \(\digraph\) and \(\odigraph\)
that preserves both weights and integrality.
Consequently, if \(\odigraph\) is \notmcbeq, then so is \(\digraph\).
\end{proof}

The Robertson--Seymour theorem~\cite{BIENSTOCK1995,ROBERTSON2004} states that every minor-closed family of graphs can be characterized by a finite set of forbidden minors. A direct consequence of Theorem~\ref{thm: optin is minor closed} is the following:

\begin{corollary}
    There exists a finite minimal set \(\forbiddenminors\) of \notmcbeq\ graphs that completely characterizes the class of \notmcbeq\ (and hence \mcbeq) graphs.
\end{corollary}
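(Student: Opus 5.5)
The corollary should follow by combining Theorem~\ref{thm: optin is minor closed} with the Robertson--Seymour theorem, applied to the underlying undirected graphs. By Observation~\ref{obs:undirected-minors}, being \mcbeq\ depends only on the underlying undirected graph. So I will regard the class \(\mathcal{G}\) of \mcbeq\ graphs as a class of (isomorphism types of) undirected finite graphs. Theorem~\ref{thm: optin is minor closed} says that \(\mathcal{G}\) is closed under taking minors. One detail needs checking: minors of the underlying graph are realized by minors of the digraph with some orientation. For edge deletion, vertex deletion and edge contraction this is immediate, since we delete or contract the corresponding arc. Orientation independence then covers any reorientation.

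Next I define \(\forbiddenminors\) to be the set of \notmcbeq\ graphs all of whose proper minors are \mcbeq, taken up to isomorphism. Two claims are needed.

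First, a graph \(\graph\) is \notmcbeq\ if and only if some minor of \(\graph\) lies in \(\forbiddenminors\). For the forward direction, suppose \(\graph\) is \notmcbeq. Among all \notmcbeq\ minors of \(\graph\), of which \(\graph\) itself is one, I choose one with the fewest vertices plus edges. Every proper minor of it is again a minor of \(\graph\) and is smaller, so it is \mcbeq. Hence the chosen minor lies in \(\forbiddenminors\). The minor relation is well-founded on finite graphs, so no further argument is needed here. For the converse, a graph having a \notmcbeq\ minor is itself \notmcbeq, by the contrapositive of Theorem~\ref{thm: optin is minor closed}.

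Second, \(\forbiddenminors\) is minimal and finite. Its elements are pairwise incomparable under the minor relation: if \(X\) were a proper minor of \(X'\) with both in \(\forbiddenminors\), then \(X'\) would have a \notmcbeq\ proper minor, contradicting the definition. The Robertson--Seymour graph minor theorem~\cite{ROBERTSON2004} states that every antichain in the minor order is finite, so \(\forbiddenminors\) is finite. Minimality holds because any other forbidden-minor characterization must contain a minor of each \(X\in\forbiddenminors\) that is \notmcbeq. Since every proper minor of \(X\) is \mcbeq, that minor must be \(X\) itself.

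There is no real obstacle here, because all the substantive work is in Theorem~\ref{thm: optin is minor closed}. The only point needing care is the passage between digraphs and undirected graphs. I would state this explicitly through Observation~\ref{obs:undirected-minors}, so that the undirected Robertson--Seymour theorem applies. The conclusion is non-constructive: it proves that \(\forbiddenminors\) exists and is finite, but gives no method for listing it.
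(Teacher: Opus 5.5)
Your proposal is correct and follows the paper's route: the paper obtains the corollary directly from Theorem~\ref{thm: optin is minor closed} together with the Robertson--Seymour theorem, treating graphs as undirected via Observation~\ref{obs:undirected-minors}. You additionally write out the standard details the paper leaves implicit: defining \(\forbiddenminors\) as the minimal \notmcbeq\ graphs, showing it is an antichain in the minor order, invoking well-quasi-ordering for finiteness, and checking minimality against any other characterizing set.
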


By Observation~\ref{obs:undirected-minors}, we can view \(\forbiddenminors\) as a set of undirected graphs. For any fixed graph \(H\), deciding whether an undirected graph \(\graph\) contains \(H\) as a minor can be done in polynomial time in the size of \(\graph\), with a constant factor that depends superpolynomially on the size of \(H\) \cite{ROBERTSON1995}. Consequently, deciding whether \(\graph\) contains a fixed member of \(\forbiddenminors\) as a minor can also be performed in polynomial time.

\begin{corollary}
\label{cor: complexity of opt-in decidability}
    There exists a polynomial-time algorithm to decide whether a given graph is \mcbeq\ or \notmcbeq.
\end{corollary}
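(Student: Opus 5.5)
The plan is to combine Theorem~\ref{thm: optin is minor closed} with the Robertson--Seymour theory exactly as in the preceding corollary. By Observation~\ref{obs:undirected-minors}, \mcbeq-ness depends only on the underlying undirected graph. So given an input digraph \(\digraph\), we first discard arc directions to obtain an undirected (multi)graph \(\graph\) in linear time. We then want to decide whether \(\graph\) is \mcbeq.

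By Theorem~\ref{thm: optin is minor closed}, the class of \mcbeq\ graphs is minor-closed. The corollary following it therefore provides a finite set \(\forbiddenminors\) of graphs with the following property: \(\graph\) is \notmcbeq\ if and only if some \(H\in\forbiddenminors\) is a minor of \(\graph\). The direction ``contains a minor in \(\forbiddenminors\) \(\Rightarrow\) \notmcbeq'' is again Theorem~\ref{thm: optin is minor closed}, read contrapositively. The converse holds because \(\graph\) itself, if \notmcbeq, has a minor-minimal \notmcbeq\ minor, and that minor lies in \(\forbiddenminors\) by minimality.

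The algorithm is then: for each of the finitely many \(H\in\forbiddenminors\), run the Robertson--Seymour minor-testing algorithm~\cite{ROBERTSON1995}. It decides \(H\preceq \graph\) in time polynomial in \(|\vertices{\graph}|+|\Arcs{\graph}|\), with a constant depending only on \(H\). We output \notmcbeq\ if some test succeeds and \mcbeq\ otherwise. Since \(\forbiddenminors\) is finite and fixed, the total running time is a finite sum of polynomials, hence polynomial.

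The main subtlety is that the argument is non-constructive: it proves that such an algorithm exists without exhibiting \(\forbiddenminors\). The statement only asserts existence, so this suffices. I would stress this point, and also check one technicality. Robertson--Seymour applies to graphs that may have parallel edges and loops, which arise from digraphs with antiparallel arcs and from contractions. Either we work in the multigraph version of the theorem, or we note that loops contribute trivially to the cycle space and can be handled separately. With these observations the corollary follows directly.
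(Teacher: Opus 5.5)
Your proposal is correct and matches the paper's argument: minor-closedness (Theorem~\ref{thm: optin is minor closed}) together with Robertson--Seymour gives a finite forbidden-minor set \(\forbiddenminors\), and running the fixed-\(H\) minor test for each \(H\in\forbiddenminors\) gives a non-constructive polynomial-time algorithm. Your extra care with the forbidden-minor characterization and with multigraphs is sound; for the latter, note that Lemma~\ref{lem:simple-graph} shows every member of \(\forbiddenminors\) is simple, so it suffices to test for minors in the simplification of the input graph.
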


However, this result is non-constructive: in order to explicitly obtain such an algorithm, one would first need to determine the set of forbidden minors \(\forbiddenminors\). 

\begin{observation}
    We could also have defined \mcbeq{} in terms of minimum undirected cycle bases. While this changes the set of \mcbeq{} graphs, the results and proofs of this section remain valid.
\end{observation}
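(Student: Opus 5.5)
The statement to prove is the final Observation: if \mcbeq{} is redefined using minimum undirected cycle bases, the results of this section still hold. My plan is to re-check each result in turn, Proposition~\ref{prop: MICBP in optin is in P}, Theorem~\ref{thm: optin is minor closed} and the two corollaries. For each one I will isolate the facts about minimum directed cycle bases that the proof uses, and confirm that the same facts hold for minimum undirected cycle bases, i.e.\ bases of \(\cyclespace{\digraph}{\mathbb F_2}\).

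\textbf{Facts needed from matroid theory.} The undirected cycle bases are the bases of a matroid over \(\mathbb F_2\). So a greedy algorithm such as Horton's algorithm, or de Pina's algorithm over \(\mathbb F_2\), returns a minimum basis. Also, every minimum basis has the same sorted sequence of cycle weights. These are the only two ingredients used in Proposition~\ref{prop: MICBP in optin is in P}. The perturbation \(\tweight\) still encodes each cycle by its weight, so the minimum undirected basis is unique. Under the new definition of \mcbeq{}, this unique basis must be integral, and the tie-breaking argument carries over word for word.

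\textbf{Minor-closedness.} I will then go through Theorem~\ref{thm: optin is minor closed} one operation at a time.
\begin{itemize}
\item \emph{Edge deletion.} The argument uses only the greedy property and linear independence of the cycles inside \(\odigraph\). Both hold over \(\mathbb F_2\), since the cycle space of \(\odigraph\) is the subspace of \(\cyclespace{\digraph}{\mathbb F_2}\) of vectors vanishing on \(\Arco\).
\item \emph{Vertex deletion.} This case is formal and needs no change.
\item \emph{Contraction.} Two points need checking. First, a zero-weight arc \(\Arco\) can be forced into every cycle of the minimum basis that meets both \(u\) and \(v\). Second, the chord correspondence \(\tree=\otree\cup\{\Arco\}\) gives a weight-preserving bijection between undirected bases that preserves the determinant, and hence integrality. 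Neither point depends on the field.
\end{itemize}

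\textbf{Main obstacle.} The delicate step is the contraction claim that a cycle in a minimum basis containing \(u\) and \(v\) but not \(\Arco\) can be avoided. The argument splits such a cycle along \(\Arco\) into two cycles of no greater total weight, one of which can replace it by the exchange property. Over \(\mathbb F_2\) this replacement is simpler than over \(\bR\): the original cycle is the mod-2 sum of the two pieces, so one piece keeps the basis independent and weighs at most as much. I will state this explicitly. Alternatively, I will use the tie-breaking perturbation to make \(\weight(\Arco)=0\) strictly preferred.

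\textbf{Corollaries.} Once minor-closedness is established, the Robertson–Seymour corollary and Corollary~\ref{cor: complexity of opt-in decidability} follow verbatim. The set \(\forbiddenminors\) may change, which explains why the family of \mcbeq{} graphs can differ under the two definitions.
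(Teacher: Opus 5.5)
Your plan is correct and matches the paper's approach: the paper gives no separate argument and simply asserts that the proofs of Proposition~\ref{prop: MICBP in optin is in P}, Theorem~\ref{thm: optin is minor closed} and the two corollaries go through with $\mathbb F_2$ in place of $\bR$. In the contraction step, use your perturbation variant (raise $\oweight$ slightly so it is strictly positive, which keeps the strict gap on $\odigraph$ and makes both pieces strictly lighter), because an exchange that only keeps the weight equal could turn an integral minimum basis into a non-integral one. Note also that neither you nor the paper proves that the family of \mcbeq{} graphs actually changes: the chain \emph{minimum directed} $\le$ \emph{minimum undirected} $\le$ \emph{minimum integral} only shows that every graph that is \mcbeq{} in the original sense is \mcbeq{} in the new sense, and a strict change would need an explicit example.
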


\section{Basic results for graphs in \(\forbiddenminors\)}
\label{sec: results for X}

Identifying the minimal set \(\forbiddenminors\) of forbidden minors would allow one to efficiently determine whether a minimum cycle basis algorithm can be safely used to solve the minimum integral cycle basis problem on a given graph. More broadly, we believe that identifying such forbidden minors could contribute to understanding the complexity of the minimum integral cycle basis problem. In particular, graphs in \(\forbiddenminors\) exhibit the essential obstructions to minimum cycle basis algorithms and may therefore provide useful gadgets for complexity reductions, or provide better understanding of the structure of the problem.

Proving that a given graph \(\digraph\) belongs to \(\forbiddenminors\) requires establishing two conditions. First, one must prove that \(\digraph\) is \notmcbeq, which can be accomplished by exhibiting a single edge-weight assignment for which no minimum cycle basis is integral. Second, one must prove that every graph obtained from \(\digraph\) by edge deletion or contraction is \mcbeq. Although Proposition \ref{prop: MICBP in optin is in P} guarantees the existence of a polynomial-time algorithm for deciding \mcbeq-ness in general, this guarantee is non-constructive and itself presupposes knowledge of \(\forbiddenminors\). 

In this section, we establish several structural and combinatorial properties of graphs in \(\forbiddenminors\).

\begin{lemma}\label{lem: forbidden minors are 2-connected min degree 3}
All graphs in \(\forbiddenminors\) are $2$-connected with minimum degree $3$.
\end{lemma}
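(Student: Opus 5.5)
We argue by minimality: a graph $\digraph\in\forbiddenminors$ is \notmcbeq, while every proper minor of $\digraph$ is \mcbeq{} by Theorem~\ref{thm: optin is minor closed}. In each case below we assume $\digraph$ fails the claimed property. We then exhibit a proper minor $\odigraph$ of $\digraph$ and a weight-preserving correspondence between minimum cycle bases of $\digraph$ and of $\odigraph$ that also preserves integrality. This makes $\odigraph$ \notmcbeq{} and contradicts minimality. Isolated vertices can be discarded immediately: deleting them gives a proper minor with the same cycle space and the same cycle bases.

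\emph{2-connectivity.} If $\digraph$ is not 2-connected, then either it is disconnected or it has a cut vertex. In both cases, by Observation~\ref{obs:blocks} its cycle space splits as a direct sum over blocks (or components) $\block$. Every cycle lives in a single block, so any cycle basis is a union of cycle bases $\cyclebasis_\block$ of the blocks. Its weight is $\sum_\block \weight(\cyclebasis_\block)$, and the basis is minimum if and only if each $\cyclebasis_\block$ is minimum. Integrality also decomposes: the determinant of the basis is the product of the block determinants (in the block-diagonal form with respect to a spanning forest). Equivalently, a cycle of $\block$ has an integer expansion in $\cyclebasis$ if and only if it has one in $\cyclebasis_\block$. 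Now fix a weight function $\weight$ for which no minimum cycle basis of $\digraph$ is integral. Then some block $\block$ must have no integral minimum cycle basis under $\weight|_\block$. Otherwise we could take an integral minimum basis in every block, and their union would be an integral minimum basis of $\digraph$. So $\block$ is \notmcbeq. Since $\digraph$ is not 2-connected, $\block$ is a proper subgraph, hence a proper minor, which contradicts minimality. (Cycle-free blocks, i.e.\ bridges, contribute nothing and are handled trivially.)

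\emph{Minimum degree 3.} Since $\digraph$ is now 2-connected and has a cycle, every vertex has degree at least $2$. Suppose $v$ has degree exactly $2$, with incident arcs $\Arc_1=\{u,v\}$ and $\Arc_2=\{v,x\}$. Every cycle through $v$ uses both arcs, so the two arcs occur with coefficients of equal absolute value in every cycle vector (same or opposite sign, depending on orientation). Contract $\Arc_2$ to get $\odigraph=\quotient{\digraph}{\Arc_2}$, and give the merged arc the weight $\weight(\Arc_1)+\weight(\Arc_2)$. The contraction is then a bijection between cycles of $\digraph$ and cycles of $\odigraph$, and it preserves weights. It is also a linear isomorphism of cycle spaces that maps the integer lattice to the integer lattice: it forgets one coordinate that is determined by another up to sign. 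Hence minimum cycle bases correspond, and integral ones correspond to integral ones. Choosing the weights on $\digraph$ that witness \notmcbeq{} then shows $\odigraph$ is \notmcbeq. This is a proper minor, again a contradiction.

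The main obstacle is making the integrality-preservation claims rigorous rather than hand-waved. For the degree-2 case this should be easy via fundamental-tree bookkeeping, as in the contraction step of Theorem~\ref{thm: optin is minor closed}: take a spanning tree containing $\Arc_2$, so the chords coincide. For the block decomposition, I would use the characterization that a basis is integral if and only if its determinant with respect to tree chords is $\pm1$, together with the block-diagonal structure of that matrix for a spanning forest assembled blockwise. A possible subtlety is a digon arising from parallel edges after contraction. I would handle it by allowing multigraphs throughout, consistent with the minor operations used in Theorem~\ref{thm: optin is minor closed}.
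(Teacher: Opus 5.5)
Your proposal is correct and follows the same route as the paper: 2-connectivity comes from the block decomposition of the cycle space (Observation~\ref{obs:blocks}), and a degree-2 vertex is ruled out by contracting one of its incident arcs, which leaves the cycle space and hence the opt-in status unchanged. Your version is more explicit, in that you put the weight $w(e_1)+w(e_2)$ on the surviving arc and check integrality via block-diagonal chord determinants, and you dispatch degree $\le 1$ via 2-connectivity instead of by deleting the vertex, but these are refinements of the paper's argument rather than a different approach.
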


\begin{proof}
For \(2\)-connectivity, if \(\digraph \in \forbiddenminors\) has a cut vertex, then it decomposes into blocks with \(\Dcyclespace = \bigoplus_{\block\in\blocks}\cyclespace{\block}{\bR}\) (Observation~\ref{obs:blocks}), so that a minimum cycle basis of \(\graph\) restricts to a minimum cycle basis of each block independently, and \(\graph\) is \notmcbeq\ iff some block is. Therefore, if \(\digraph \in \forbiddenminors\), then the number of blocks is exactly \(1\) and \(\digraph\) is \(2\)-connected.
 
Concerning the minimum degree, a vertex of degree \(\le1\) lies on no cycle and can be deleted without changing the cycle space. 
A vertex \(v\) of degree exactly \(2\) subdivides the path between its two neighbors; contracting one of its two incident arcs leaves the cycle space structurally unchanged, so that \(\digraph\) and \(\digraph/e\) again share the same \mcbeq-ness status, precluding minimality.
\end{proof}

\begin{definition}
For each cycle basis $\cyclebasis$ of $\digraph$, we define its \emph{coefficient vector} $c_\cyclebasis \in \mathbb Z_{\geq 0}^{\Darcs}$ by
\[
c_{\cyclebasis}(\Arc) \coloneqq |\{C \in \cyclebasis : \Arc \in C\}|.
\]
\end{definition}

With this notation, we have $w(\cyclebasis) = c_\cyclebasis^\top w$.

Let \(\optbases\) denote the set of directed cycle bases that are the unique minimum-weight basis for some weight \(\weight\ge\mathbf 0\), and split it as \(\optbases = \optintbases \sqcup \optnotintbases\), where \(\optintbases\) (respectively \(\optnotintbases\)) consists of the integral (respectively non-integral) members of \(\optbases\).

\begin{observation}\label{obs:optout-basic}
Recall that by Proposition~\ref{prop: MICBP in optin is in P} and the perturbation argument in its proof, a graph is \notmcbeq{} if and only if there exists a weight $w$ such that the minimum directed cycle basis $\cyclebasis$ w.r.t.\ $w$ is unique and non-integral. Therefore, a graph $\digraph$ is \notmcbeq\ if and only if $\optnotintbases \neq \emptyset$; and for each $\cyclebasis \in \optnotintbases$, there exists a weight vector $w \geq \mathbf 0$ such that
\[
c_\cyclebasis^\top w < c_{\cyclebasis'}^\top w \quad \text{for all } \cyclebasis' \in \mathfrak{B}_I.
\]
\end{observation}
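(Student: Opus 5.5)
The statement has two parts: the equivalence ``\notmcbeq{} $\iff$ $\optnotintbases\neq\emptyset$'', and the existence of a separating weight for each $\cyclebasis\in\optnotintbases$. I would prove them in that order, using the perturbation argument of Proposition~\ref{prop: MICBP in optin is in P} for the first and only the definition of $\optbases$ for the second.

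\emph{Forward direction of the equivalence.} Suppose $\digraph$ is \notmcbeq. Then there is a weight $\weight\ge\nullv$ whose minimum directed basis weight is strictly smaller than its minimum integral basis weight. By continuity and finiteness of the set of bases, we may assume $\weight$ is rational; the strict gap between the two minima survives a small rational approximation. Now apply the perturbation $\tweight$ from the proof of Proposition~\ref{prop: MICBP in optin is in P}. The minimum directed basis for $\tweight$ is unique, call it $\cyclebasis$. It is also optimal for $\weight$, since the lexicographic tie-breaking argument shows that $\tweight$-optimal bases are among the $\weight$-optimal ones. Every $\weight$-optimal basis is non-integral, because the integral minimum is strictly larger. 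Hence $\cyclebasis$ is non-integral and lies in $\optnotintbases$.

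\emph{Converse.} Suppose $\cyclebasis\in\optnotintbases$, realised as the unique optimum for some $\weight$. Then every integral basis is strictly heavier than $\cyclebasis$. So the minimum integral weight exceeds the minimum directed weight, and $\digraph$ is \notmcbeq.

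\emph{Separating weight.} For $\cyclebasis\in\optnotintbases$, take the weight $\weight$ witnessing membership in $\optbases$. Uniqueness gives $\weight(\cyclebasis)<\weight(\cyclebasis')$ for every basis $\cyclebasis'\neq\cyclebasis$. Every $\cyclebasis'\in\optintbases$ is integral, so it differs from the non-integral $\cyclebasis$. Writing $\weight(\cdot)=c_{\cdot}^\top\weight$ then gives exactly the claimed inequality.

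\emph{Main obstacle.} The only delicate point is the forward direction: guaranteeing a weight with a \emph{unique} non-integral optimum. I would handle it by rationalising $\weight$ so that the decimal perturbation applies. An alternative is a generic small perturbation: a weight in general position makes all cycle weights distinct, and the minimum basis is then unique by the matroid greedy property.
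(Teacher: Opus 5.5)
Your proposal is correct and follows essentially the same route as the paper. For the forward direction, you apply the perturbation from Proposition~\ref{prop: MICBP in optin is in P} to obtain a weight whose unique optimum is non-integral, which gives $\optnotintbases\neq\emptyset$; the converse and the separating inequality then follow directly from the definition of $\optbases$, because every $\cyclebasis'\in\optintbases$ differs from the non-integral $\cyclebasis$. Your care in first approximating $\weight$ is a detail the paper glosses over. Note that rationality alone does not give the finite decimal expansion the perturbation needs, but scaling to integer weights, or your generic-perturbation alternative, settles this.
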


\begin{lemma}\label{lem: minimum not integer cycle basis in X are greater than two}
If \(\digraph \in \forbiddenminors\) and \(\cyclebasis \in \optnotintbases\), then \(c_{\cyclebasis} \geq \mathbf{2}\).
\end{lemma}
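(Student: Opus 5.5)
We argue by contradiction. Suppose \(\digraph \in \forbiddenminors\) and \(\cyclebasis \in \optnotintbases\), but some arc \(\Arco\) has \(c_\cyclebasis(\Arco) \le 1\).

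The case \(c_\cyclebasis(\Arco)=0\) is immediate. A cycle basis spans \(\Dcyclespace\). Since \(\digraph\) is \(2\)-connected by Lemma~\ref{lem: forbidden minors are 2-connected min degree 3}, every arc lies on some cycle. So every arc appears in at least one basis cycle, i.e.\ \(c_\cyclebasis \ge \mathbf 1\).

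The main case is \(c_\cyclebasis(\Arco)=1\): exactly one cycle \(\cycle_0 \in \cyclebasis\) contains \(\Arco\). We pass to the minor \(\odigraph \coloneqq \digraph\setminus\Arco\), which is \mcbeq\ by minimality of \(\forbiddenminors\). The set \(\cyclebasis\setminus\{\cycle_0\}\) consists of \(\mu(\digraph)-1=\mu(\odigraph)\) linearly independent cycles of \(\odigraph\), so it is a cycle basis of \(\odigraph\).

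First, I show it is a minimum basis of \(\odigraph\) for the weight \(\weight\) that makes \(\cyclebasis\) uniquely optimal in \(\digraph\). Take any basis \(\cyclebasis'\) of \(\odigraph\). Adding a cycle through \(\Arco\) yields a basis of \(\digraph\). By the matroid exchange property, adding \(\cycle_0\) itself works, since \(\cycle_0\notin\Span(\cyclebasis')\) because it uses \(\Arco\). Hence
\[
\weight(\cyclebasis')+\weight(\cycle_0) \ \ge\ \weight(\cyclebasis),
\]
with equality only if \(\cyclebasis'\cup\{\cycle_0\}=\cyclebasis\). Therefore \(\cyclebasis\setminus\{\cycle_0\}\) is the unique minimum basis of \((\odigraph,\weight|_{\odigraph})\).

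Since \(\odigraph\) is \mcbeq, Observation~\ref{obs:optout-basic} (uniqueness plus opt-in) forces \(\cyclebasis\setminus\{\cycle_0\}\) to be integral in \(\odigraph\). It remains to lift integrality to \(\cyclebasis\) by a determinant argument. Choose a spanning tree \(\tree\) of \(\odigraph\); it is also a spanning tree of \(\digraph\), with \(\Arco\) as an additional chord. Form the square matrix of basis cycles restricted to the chords of \(\tree\). Integrality is equivalent to this determinant being \(\pm1\) \cite{cyclebasisintro}.

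In this matrix, the column of \(\Arco\) has a single nonzero entry \(\cycle_0(\Arco)=\pm1\). Expanding along that column gives
\[
\det(\cyclebasis)=\pm\det(\cyclebasis\setminus\{\cycle_0\})=\pm1.
\]
So \(\cyclebasis\) is integral, contradicting \(\cyclebasis\in\optnotintbases\).

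The step I expect to be most delicate is the exchange argument establishing unique optimality in \(\odigraph\); in particular, checking that \(\cyclebasis'\cup\{\cycle_0\}\) is really a basis of \(\digraph\). The determinant expansion is routine once the chord matrix formulation of integrality from \cite{cyclebasisintro} is invoked.
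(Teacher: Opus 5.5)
Your proof is correct and follows essentially the same route as the paper: delete the arc $e$ that lies in a single basis cycle $C_0$, use that $D\setminus e$ is opt-in together with the uniqueness of $\mathcal{B}$, and lift integrality back to $D$ by adjoining $C_0$. The only difference is the order of steps. The paper takes an integral basis of $D\setminus e$ of weight at most $w(\mathcal{B}\setminus\{C_0\})$, adjoins $C_0$, and invokes uniqueness in $D$, whereas you first transfer uniqueness to $D\setminus e$; your cofactor expansion along the column of $e$ also makes explicit the integrality-lifting step that the paper leaves implicit.
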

\begin{proof}
If not, then some arc $e$ has $c_{\cyclebasis,e}=1$ (recall $c_\cyclebasis\ge\mathbf 1$ since \(\digraph\) is $2$-connected, Lemma~\ref{lem: forbidden minors are 2-connected min degree 3}); let $C$ be the unique cycle of $\cyclebasis$ containing $e$.
Since $\digraph\setminus\{e\}$ is \mcbeq\ by minimality of \(\digraph \in \forbiddenminors\), it has an integral cycle basis $\cyclebasis_e$ with weight at most that of $\cyclebasis\setminus\{C\}$, so $\cyclebasis' \coloneqq \cyclebasis_e\cup\{C\}$ is an integral basis of $\digraph$ with $\weight(\cyclebasis')\leq \weight(\cyclebasis)$. By the uniqueness assumption, $\cyclebasis = \cyclebasis'$, contradicting non-integrality of \(\cyclebasis\).
\end{proof}

Interestingly, a graph being in \(\forbiddenminors\) forces some balancing constraints on the weight \(\weight\) that induces a non-integral unique minimum cycle basis. In particular, \(\weight\) must be a strictly metric weight, that is, \(\weight(\Arc) < d_{\digraph\setminus\Arc}(u,v)\) for every arc
\(\Arc=(u,v)\in\Darcs\), where \(d_{\digraph\setminus\Arc}\) is
shortest-path distance in \(\digraph\setminus\Arc\).

\begin{lemma}\label{lem: X metric weight}
    Let \(\digraph \in \forbiddenminors\) and let $w \geq \mathbf {0}$ be a weight vector such that $\cyclebasis \in \optnotintbases$ is the unique minimum directed cycle basis for $w$. Then $w$ is strictly metric.
\end{lemma}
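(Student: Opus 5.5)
We argue by contradiction. Suppose some arc \(\Arco=(u,v)\) satisfies \(\weight(\Arco)\ge d_{\digraph\setminus\Arco}(u,v)\). Since \(\digraph\) is \(2\)-connected (Lemma~\ref{lem: forbidden minors are 2-connected min degree 3}), \(\digraph\setminus\Arco\) contains a \(u\)-\(v\) path; let \(\walk\) be a shortest one, so \(\weight(\walk)\le\weight(\Arco)\). Let \(\cycle_0\) be the cycle formed by \(\walk\) and \(\Arco\). The plan is to construct an integral cycle basis of weight at most \(\weight(\cyclebasis)\). Since \(\cyclebasis\) is the \emph{unique} minimum basis, this integral basis would have to equal \(\cyclebasis\), contradicting \(\cyclebasis\in\optnotintbases\).

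\textbf{Step 1: rerouting.} For every \(\cycle\in\cyclebasis\) with \(\Arco\in\cycle\), replace \(\Arco\) by \(\walk\), oriented consistently, to obtain the closed walk vector \(\cycle-\cycle(\Arco)\,\cycle_0\). Its support lies in \(\digraph\setminus\Arco\), and its weight is at most \(\weight(\cycle)\) because \(\weight(\walk)\le\weight(\Arco)\) and possible cancellations only reduce weight. This vector is a sum of cycles of \(\digraph\setminus\Arco\), each of weight at most \(\weight(\cycle)\).

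\textbf{Step 2: a basis of \(\digraph\setminus\Arco\).} The modified vectors \(\{\cycle-\cycle(\Arco)\cycle_0\}\) together with the cycles of \(\cyclebasis\) avoiding \(\Arco\) span a space that contains \(\cyclespace{\digraph\setminus\Arco}{\bR}\). Indeed, \(\cyclebasis\cup\{\cycle_0\}\) spans everything, and projecting along \(\cycle_0\) kills the \(\Arco\)-coordinate. So these vectors span the cycle space of \(\digraph\setminus\Arco\). By the matroid greedy exchange (the same argument as in Lemma~\ref{lem: minimum not integer cycle basis in X are greater than two}), a minimum cycle basis \(\cyclebasis_{\Arco}\) of \(\digraph\setminus\Arco\) therefore has weight at most \(\weight(\cyclebasis)-\weight(\cycle_1)\), where \(\cycle_1\) is one cycle of \(\cyclebasis\) containing \(\Arco\). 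To make this precise, order the basis by weight and use that the \(i\)-th smallest element of a minimum basis is at most the \(i\)-th smallest element of any spanning set. One cycle is dropped because the dimension falls by one. Choose \(\cycle_1\) to be a cycle of \(\cyclebasis\) containing \(\Arco\) whose rerouted vector is dependent on the others, so it is the one discarded. Then \(\weight(\cycle_0)\le\weight(\cycle_1)\), since \(\cycle_0\) is a shortest cycle through \(\Arco\), using the minimality of \(\weight(\walk)\).

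\textbf{Step 3: lifting back.} By minimality of \(\digraph\in\forbiddenminors\), the graph \(\digraph\setminus\Arco\) is \mcbeq, so \(\cyclebasis_{\Arco}\) may be taken integral (Proposition~\ref{prop: MICBP in optin is in P}). Then \(\cyclebasis_{\Arco}\cup\{\cycle_0\}\) is an integral basis of \(\digraph\): the chord \(\Arco\) occurs only in \(\cycle_0\), with coefficient \(\pm1\), so the determinant is unchanged. Its weight is at most \(\weight(\cyclebasis)-\weight(\cycle_1)+\weight(\cycle_0)\le\weight(\cyclebasis)\). Uniqueness of the minimum basis then gives the contradiction described at the start.

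\textbf{Main obstacle.} Step 2 is the delicate part: controlling the weight bookkeeping when several basis cycles contain \(\Arco\), and the rerouted closed walks are not cycles and may overlap \(\walk\). I would handle this by decomposing each rerouted walk into cycles, each of weight at most that of the original cycle. I would then apply the greedy/rank argument to the resulting multiset of cycles, which spans \(\cyclespace{\digraph\setminus\Arco}{\bR}\), instead of arguing with the rerouted vectors directly.
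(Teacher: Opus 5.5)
Your argument is correct, but it takes a different route from the paper. The paper argues through two earlier facts. A unique minimum basis must be isometric. So if $w(e)>d_{D\setminus e}(u,v)$, then $e$ is not a shortest $u$-$v$ path, and every basis cycle through $e$ is forced to be $e$ plus the (perturbation-unique) shortest $u$-$v$ path in $D\setminus e$. Hence $c_B(e)\le 1$, contradicting Lemma~\ref{lem: minimum not integer cycle basis in X are greater than two}.

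You instead run a direct deletion/exchange argument. It generalizes the proof of that lemma from ``$e$ lies in only one basis cycle'' to ``$e$ is not strictly metric''. You reroute through $P$, bound the minimum basis of $D\setminus e$ by $w(B)-w(C_1)$, and add back the shortest cycle $C_0$ through $e$. The paper's proof is shorter given its earlier lemmas. Yours is self-contained: it needs neither isometricity, nor $c_B\ge\mathbf 2$, nor uniqueness of the shortest path. It also handles the tie case $w(e)=d_{D\setminus e}(u,v)$ directly. The paper's text treats that case only implicitly via the perturbation remark; there the isometry condition gives nothing for the pair $u,v$, since $e$ itself is a shortest path.

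One step should be stated differently. The claim ``the $i$-th smallest element of a minimum basis is at most the $i$-th smallest element of any spanning set'' is false for spanning sets with redundancy, e.g.\ a cycle listed twice. That is exactly your situation after decomposing the rerouted walks. You do not need the claim. Use the total-weight bound from Step 1: the cycles in a decomposition of $C_j-C_j(e)C_0$ have total weight at most $w(C_j)$, not merely each at most $w(C_j)$. Then take the multiset $T$ of these cycles for $j\neq 1$ together with the cycles of $B$ avoiding $e$. It spans the cycle space of $D\setminus e$, so it contains a cycle basis of $D\setminus e$. By nonnegativity of $w$, that basis has weight at most $w(T)\le w(B)-w(C_1)$.

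The existence of a droppable $C_1$ is also immediate. The cycles of $B$ avoiding $e$ are linearly independent, so the one-dimensional dependency among your $\mu$ vectors must involve a rerouted one.
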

\begin{proof}
    Since every graph admits a minimum cycle basis that is isometric~\cite{cyclebasisintro}, uniqueness of \(\cyclebasis\) implies it must be isometric: for any cycle \(\cycle \in \cyclebasis\) and any two vertices \(u,v \in \vertices{\cycle}\), one of the two paths connecting \(u\) and \(v\) along \(\cycle\) is a shortest path in \(\graph\).

    We proceed by contradiction. If \(\weight\) is not metric, there exists an arc \(\Arc = (u,v) \in \Darcs\) whose weight is larger than the weight of a shortest path between \(u\) and \(v\) in \(\graph \setminus \Arc\).  By the perturbation argument in Proposition~\ref{prop: MICBP in optin is in P}, we can assume that a set of arcs is uniquely identified by its weight, so that the shortest path \(\walk_{u,v}\) is unique. Since the cycles in \(\cyclebasis\) are isometric, any \( \cycle\in\cyclebasis\) containing \(\Arc\) must be exactly \(\cycle=\Arc\walk_{u,v}\). Therefore, \(\Arc\) is contained in exactly one cycle in \(\cyclebasis\). This contradicts $c_\cyclebasis(e) \geq 2$ (Lemma~\ref{lem: minimum not integer cycle basis in X are greater than two}).
\end{proof}

Let \(\cycle\) be a cycle of \(\digraph\). An arc \(\Arc=(u,v)\in
\Darcs\setminus\Arcs\cycle\) with \(u,v\in\vertices\cycle\) non-consecutive
on \(\cycle\) is a \emph{chord} of \(\cycle\). A cycle with no chord is
\emph{chordless} (equivalently, \emph{non-chordal}, or \emph{induced}).

\begin{lemma}[Chordless minimum bases under strict metricity]\label{lem:chordless-strict-metric}
If \(\weight\) is strictly metric on \(\graph\), every minimum-weight directed
cycle basis of \(\digraph\) consists entirely of chordless cycles.
\end{lemma}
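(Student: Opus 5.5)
We argue by contradiction with an exchange argument. Suppose \(\cyclebasis\) is a minimum-weight directed cycle basis and some \(\cycle\in\cyclebasis\) has a chord \(\Arc=(u,v)\), so \(\Arc\notin\cycle\) and \(u,v\in\vertices{\cycle}\) are non-consecutive on \(\cycle\). The vertices \(u,v\) split \(\cycle\) into two \(u\)--\(v\) paths \(\walk_1,\walk_2\). Each of them has at least one interior vertex, because \(u\) and \(v\) are not consecutive on \(\cycle\). Hence neither path is the single arc \(\Arc\), and both lie in \(\digraph\setminus\Arc\).

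From the chord we form two cycles, \(\cycle_1\coloneqq \Arc\cup\walk_1\) and \(\cycle_2\coloneqq\Arc\cup\walk_2\), each oriented consistently with \(\cycle\) along its path part. With suitable signs on \(\Arc\), the incidence vectors satisfy \(\cycle=\cycle_1\pm\cycle_2\) in \(\Dcyclespace\): the contributions of \(\Arc\) cancel, and the union of the two path parts is exactly \(\cycle\). Since \(\cycle\) lies in \(\Span\{\cycle_1,\cycle_2\}\), the vectors of \(\cyclebasis\setminus\{\cycle\}\) together with \(\cycle_1,\cycle_2\) span \(\Dcyclespace\). So at least one \(\cycle_i\) can replace \(\cycle\). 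If both \(\cycle_1\) and \(\cycle_2\) lay in \(\Span(\cyclebasis\setminus\{\cycle\})\), then so would \(\cycle\), contradicting the independence of \(\cyclebasis\). Let \(\cycle_i\) be one that does not lie in this span; then \(\cyclebasis'\coloneqq(\cyclebasis\setminus\{\cycle\})\cup\{\cycle_i\}\) is again a directed cycle basis.

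Strict metricity now gives the weight comparison. Since \(\walk_{3-i}\) is a \(u\)--\(v\) path in \(\digraph\setminus\Arc\),
\[
\weight(\Arc) < d_{\digraph\setminus\Arc}(u,v) \le \weight(\walk_{3-i}),
\]
and therefore
\[
\weight(\cycle_i)=\weight(\Arc)+\weight(\walk_i) < \weight(\walk_{3-i})+\weight(\walk_i)=\weight(\cycle).
\]
It follows that \(\weight(\cyclebasis')<\weight(\cyclebasis)\), contradicting the minimality of \(\cyclebasis\). Hence every cycle of a minimum basis is chordless.

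The step needing the most care is the exchange step. One must check that \(\cycle_1\) and \(\cycle_2\) are genuine cycles (the paths \(\walk_i\) are internally disjoint and do not contain \(\Arc\)) and verify the signed identity \(\cycle=\cycle_1\pm\cycle_2\) over \(\bR\) under an arbitrary orientation of the arcs. Orienting both \(\cycle_1\) and \(\cycle_2\) consistently with the traversal of \(\cycle\) settles this: the arc \(\Arc\) is then traversed once in each direction, so its coefficients cancel in the sum. Parallel arcs do not cause trouble, because a chord by definition joins non-consecutive vertices.
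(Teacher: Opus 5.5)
Your proof is correct and matches the paper's argument: you split $C$ along the chord into $C_1$ and $C_2$ with $C = C_1 + C_2$, exchange one of them into the basis, and use strict metricity to show that cycle is strictly lighter than $C$. The only difference is that you prove the exchange step directly (if both $C_1, C_2$ lay in the span of $\mathcal{B}\setminus\{C\}$, so would $C$), where the paper cites Horton's exchange theorem.
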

\begin{proof}
Let \(\cyclebasis\) be a minimum-weight cycle basis and suppose some
\(\cycle\in\cyclebasis\) has a chord \(\Arc=(u,v)\), splitting \(\cycle\)
into its two subpaths \(\walk_{u,v},\walk_{v,u}\) (so
\(\cycle=\walk_{u,v}\cup\walk_{v,u}\)). Let \(\cycle_1\coloneqq
\walk_{u,v}\cup\{\Arc\}\) and \(\cycle_2\coloneqq\walk_{v,u}\cup\{\Arc\}\)
be the two cycles obtained by closing each subpath with the chord, so
that \(\cycle = \cycle_1 + \cycle_2\). By Horton's exchange theorem \cite{horton1987}, there exist \(\cycle'\in\{\cycle_1,\cycle_2\}\) such that \(\cyclebasis\setminus\{\cycle\}\cup\{\cycle'\}\) is a cycle basis. Say \(\cycle' = \cycle_1\), since \(\walk_{v,u}\) is a path from \(u\) to \(v\) avoiding \(\Arc\),
strict metricity gives \(\weight(\Arc) < d_{\digraph\setminus\{\Arc\}}(u,v)
\le \weight(\walk_{v,u})\), hence
\[
\weight(\cycle_1) = \weight(\walk_{u,v})+\weight(\Arc)
< \weight(\walk_{u,v})+\weight(\walk_{v,u}) = \weight(\cycle).
\]
So \(\weight(\cyclebasis') < \weight(\cyclebasis)\), contradicting
minimality of \(\cyclebasis\). Hence \(\cyclebasis\) has no cycle with a chord.
\end{proof}

\begin{lemma}\label{lem:simple-graph}
    Any $\digraph \in \forbiddenminors$ is a simple graph.
\end{lemma}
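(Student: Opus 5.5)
We must show that any $\digraph \in \forbiddenminors$ has no loops and no parallel arcs. By Observation~\ref{obs:undirected-minors} we may ignore orientations, so antiparallel pairs also count as parallel. The plan is to argue by contradiction: take $\digraph \in \forbiddenminors$ containing a loop or a pair of parallel arcs. Fix a weight $\weight \geq \mathbf 0$ for which some $\cyclebasis \in \optnotintbases$ is the unique minimum directed cycle basis; it exists by Observation~\ref{obs:optout-basic}. Then derive a contradiction with Lemma~\ref{lem: X metric weight} or Lemma~\ref{lem: minimum not integer cycle basis in X are greater than two}.

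\emph{Loops.} Suppose $\Arc$ is a loop at $\vertex$. Then the loop alone is a cycle $\cycle_\Arc$, and it is the only cycle whose support contains $\Arc$. Since $\cycle_\Arc$ is not a combination of other cycles, it must lie in every cycle basis. Hence $c_\cyclebasis(\Arc)=1$, contradicting $c_\cyclebasis \geq \mathbf 2$ from Lemma~\ref{lem: minimum not integer cycle basis in X are greater than two}. As an alternative route, $\cyclebasis = (\cyclebasis\setminus\{\cycle_\Arc\}) \cup \{\cycle_\Arc\}$ splits along the direct sum $\cyclespace{\digraph}{\bR} = \cyclespace{\digraph\setminus\Arc}{\bR}\oplus \Span(\cycle_\Arc)$. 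Integrality is then inherited from the \mcbeq\ graph $\digraph\setminus\Arc$, in the same way as in the proof of Lemma~\ref{lem: minimum not integer cycle basis in X are greater than two}.

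\emph{Parallel arcs.} Suppose $\Arc=(u,\head)$ and $\Arcp$ join the same pair of vertices. Strict metricity (Lemma~\ref{lem: X metric weight}) applied to $\Arc$ gives $\weight(\Arc) < d_{\digraph\setminus\Arc}(u,\head) \le \weight(\Arcp)$, because $\Arcp$ is itself a $u$-$\head$ path in $\digraph\setminus\Arc$. Applied symmetrically to $\Arcp$, it gives $\weight(\Arcp) < \weight(\Arc)$. These two strict inequalities contradict each other, so no parallel pair exists.

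The main point to check is that the definitions in Lemma~\ref{lem: X metric weight} are meant for multigraphs: $d_{\digraph\setminus\Arc}$ must be the distance after deleting only the single arc $\Arc$, not all arcs between $u$ and $\head$. Under that reading the parallel case is immediate. If this reading were in doubt, I would fall back on a direct argument. The $2$-cycle $\cycle$ formed by $\Arc,\Arcp$ is the only cycle through $\Arc$ that avoids other arcs. Any basis cycle $\cycle'$ using $\Arc$ can be replaced by $\cycle' - \cycle$ (or $\cycle'+\cycle$, depending on orientation), which uses $\Arcp$ instead; this is a Horton-type exchange. By uniqueness of $\cyclebasis$ and the perturbation of Proposition~\ref{prop: MICBP in optin is in P}, the lighter of $\Arc,\Arcp$ then appears in at most one basis cycle, namely $\cycle$ itself. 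That again contradicts Lemma~\ref{lem: minimum not integer cycle basis in X are greater than two}.
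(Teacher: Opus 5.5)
Your proof is correct.

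\textbf{Loops.} Here you argue as the paper does. The loop is the only cycle through its arc, so it lies in every basis and $c_B(e)=1$, which contradicts Lemma~\ref{lem: minimum not integer cycle basis in X are greater than two}. The paper just points to the direct-sum argument of Lemma~\ref{lem: forbidden minors are 2-connected min degree 3}, which is your alternative route.

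\textbf{Parallel arcs.} Here your route is shorter than the paper's. The paper orders the parallel arcs $e_1,\dots,e_k$ by weight and uses strict metricity only to make $e_1$ the unique shortest $u$--$v$ path. It then uses isometry of the unique minimum basis: any basis cycle through a heavier $e_i$ must be the $2$-cycle on $\{e_1,e_i\}$. Hence $c_B(e_i)=1$, again contradicting $c_B\geq\mathbf 2$. You observe instead that any parallel pair violates strict metricity outright. Each arc is a $u$--$v$ path in the graph with only the other arc deleted, giving $w(e)<w(e')<w(e)$.

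This makes the parallel case an immediate corollary of Lemma~\ref{lem: X metric weight}, whereas the paper essentially re-runs the isometry argument from that lemma's proof. As you note, your step depends on $D\setminus e$ meaning deletion of the single arc $e$. That is the literal definition, and the proof of Lemma~\ref{lem: X metric weight} goes through verbatim for multigraphs under it, so the step is sound. The paper's version uses strict metricity only for the lightest arc and would survive either reading.

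\textbf{Fallback paragraph.} It is not needed, but as written it is inaccurate. The exchange would show that the \emph{heavier} of $e,e'$ lies in at most one basis cycle, not the lighter. Also, Horton's exchange only guarantees that one of $C'\pm C$ and $C$ can replace $C'$. If it is $C$, you need the remaining $u$--$v$ path of $C'$ to weigh at least as much as the lighter parallel arc. That is exactly what the paper extracts from strict metricity of $e_1$ together with isometry.
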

\begin{proof}
    Let $\cyclebasis \in \optnotintbases$ be the unique non-integral minimum cycle basis w.r.t.\ the strictly metric weights $w$ (Lemma~\ref{lem: X metric weight}). Suppose that $\digraph$ has multiple arcs $e_1, \dots, e_k$ between the vertices $u$ and $v$, ordered by ascending weight. As $\cyclebasis$ must be isometric and consist of simple cycles \cite{cyclebasisintro}, if a cycle $\cycle \in \cyclebasis$ contains $e_i$ for some $i \geq 2$, its only arcs are $e_i$ and $e_1$. We conclude that in $\cyclebasis$, there is a unique cycle containing $e_i$, contradicting Lemma~\ref{lem: minimum not integer cycle basis in X are greater than two}.
    
    The same argument that shows 2-connectivity (see Lemma~\ref{lem: forbidden minors are 2-connected min degree 3}) also shows that $\digraph$ cannot have self-loops.
\end{proof}

We summarize the structural findings of this section as follows:

\begin{proposition}\label{prop:structure-summary}
    Let $\digraph \in \forbiddenminors$. Then $\digraph$ is a 2 -connected simple graph of minimum degree $3$. Moreover, there exists a strictly metric weight $\weight \geq \mathbf 0$ such that the corresponding minimum directed cycle basis $\cyclebasis$ is unique, non-integral, and contains only chordless cycles.
\end{proposition}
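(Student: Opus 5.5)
This proposition is a summary, so the plan is to assemble the lemmas of this section in order.

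First, Lemma~\ref{lem: forbidden minors are 2-connected min degree 3} gives that every $\digraph\in\forbiddenminors$ is $2$-connected with minimum degree $3$. Lemma~\ref{lem:simple-graph} gives that $\digraph$ is simple. These two lemmas settle the purely graph-theoretic part of the statement.

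Second, for the weight statement, I start from the fact that every graph in $\forbiddenminors$ is \notmcbeq. By Observation~\ref{obs:optout-basic}, this means $\optnotintbases\neq\emptyset$. So we may fix $\cyclebasis\in\optnotintbases$ together with a weight $\weight\ge\mathbf 0$ for which $\cyclebasis$ is the unique minimum directed cycle basis. By definition of $\optnotintbases$, this $\cyclebasis$ is non-integral. Lemma~\ref{lem: X metric weight} applied to this pair $(\weight,\cyclebasis)$ shows that $\weight$ is strictly metric. Lemma~\ref{lem:chordless-strict-metric} then says every minimum-weight directed cycle basis consists of chordless cycles, and in particular the unique one $\cyclebasis$ does.

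The only delicate point is making sure all the properties hold for one and the same weight. Uniqueness and non-integrality come from the choice of $\cyclebasis\in\optnotintbases$. Strict metricity is derived for exactly that weight, and chordlessness follows from strict metricity of that weight. So no new weight is ever constructed, and the conclusions combine directly. The real work is in Lemma~\ref{lem: X metric weight}, which itself relies on Lemma~\ref{lem: minimum not integer cycle basis in X are greater than two}. Here I can take both as given, so I expect no obstacle beyond stating this chain cleanly.
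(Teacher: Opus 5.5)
Your proof is correct and is essentially the paper's proof, which just cites the same chain: the perturbation argument of Proposition~\ref{prop: MICBP in optin is in P} (packaged in Observation~\ref{obs:optout-basic}), Lemma~\ref{lem: X metric weight}, Lemma~\ref{lem:chordless-strict-metric}, and Lemma~\ref{lem:simple-graph}. You also spell out two things the paper leaves implicit: the appeal to Lemma~\ref{lem: forbidden minors are 2-connected min degree 3} for $2$-connectivity and minimum degree, and the check that uniqueness, non-integrality, strict metricity and chordlessness all hold for one and the same weight.
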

\begin{proof}
    This is a direct consequence of Proposition~\ref{prop: MICBP in optin is in P}, Lemma~\ref{lem: X metric weight},  Lemma~\ref{lem:chordless-strict-metric}, and Lemma~\ref{lem:simple-graph}.
\end{proof}

\section{Determining membership in \(\forbiddenminors\)}\label{sec: forbidden minors alg}
\newcommand{\convex}{\operatorname{conv}}

In this section, we develop a recursive algorithm that can decide whether a graph is opt-in or opt-out based on determining membership in $\forbiddenminors$.

\subsection{Certificates for non-membership in \(\forbiddenminors\)}

We can use Lemma~\ref{lem: minimum not integer cycle basis in X are greater than two} to obtain a certificate that a graph is not a forbidden minor. We define the cone
\[ \intcone \coloneqq \left\{\sum_{i=1}^k \lambda_i c_{\cyclebasis_i} : k \in \mathbb N, \cyclebasis_1, \dots, \cyclebasis_k \in \optintbases \right\} + \bR_{\geq0}^{\Darcs}\]

and observe the following:
\begin{lemma}\label{lem:dominate-2}
    If $\mathbf{2} \in \intcone$, then $\digraph \notin \forbiddenminors$.
\end{lemma}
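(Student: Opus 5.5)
We argue by contradiction. Assume $\mathbf 2\in\intcone$ and $\digraph\in\forbiddenminors$. Since $\digraph$ is \notmcbeq, Observation~\ref{obs:optout-basic} gives some $\cyclebasis\in\optnotintbases$ and a weight $\weight\ge\mathbf 0$ with $c_\cyclebasis^\top\weight<c_{\cyclebasis'}^\top\weight$ for all $\cyclebasis'\in\optintbases$. The idea is to compare these two facts on a single weight vector. First, Lemma~\ref{lem: minimum not integer cycle basis in X are greater than two} gives $c_\cyclebasis\ge\mathbf 2$. Second, $\mathbf 2\in\intcone$ lets us write
\[
\mathbf 2=\sum_{i=1}^k\lambda_i c_{\cyclebasis_i}+r,\qquad \lambda_i\ge0,\ \cyclebasis_i\in\optintbases,\ r\in\bR^{\Darcs}_{\ge0}.
\]
We may take $k\ge1$: otherwise $\mathbf 2=r$, and a separate remark will rule this out or handle it.

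Pairing this decomposition with $\weight$ gives
\[
2\,\mathbf 1^\top\weight\ \ge\ \sum_i\lambda_i c_{\cyclebasis_i}^\top\weight\ >\ \Big(\sum_i\lambda_i\Big)c_\cyclebasis^\top\weight\ \ge\ 2\Big(\sum_i\lambda_i\Big)\mathbf 1^\top\weight.
\]
The strict inequality needs $\sum_i\lambda_i>0$ and $c_\cyclebasis^\top\weight>0$. Hence $\sum_i\lambda_i<1$ whenever $\mathbf 1^\top\weight>0$.

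This alone is not yet a contradiction, so we need an extra normalization relating $\sum_i\lambda_i$ to $\mathbf 1$. I would try the following. Since $\digraph$ is 2-connected with minimum degree $3$ (Lemma~\ref{lem: forbidden minors are 2-connected min degree 3}), every integral basis satisfies $c_{\cyclebasis_i}\ge\mathbf 1$. Also every minimum basis is at least as light as $c_{\cyclebasis_i}$, which suggests working with a more flexible weight. The cleaner route is to pick a weight adapted to the decomposition: choose $\weight$ from Observation~\ref{obs:optout-basic}, then perturb it by a large multiple of $\mathbf 1$. Adding $t\mathbf 1$ changes $c^\top\weight$ by $t\,\mathbf 1^\top c$. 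Whether uniqueness of $\cyclebasis$ survives this perturbation is exactly the point to check, and I expect it is the main obstacle.

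An alternative, more robust route avoids the perturbation. Use the exchange argument of Lemma~\ref{lem: minimum not integer cycle basis in X are greater than two} directly with the weight $\weight$. For each $\cyclebasis_i$, compare $c_{\cyclebasis_i}^\top\weight$ with $c_\cyclebasis^\top\weight$, and note that $\cyclebasis$ is strictly cheaper than every $\cyclebasis_i$. Taking the convex combination with weights $\lambda_i/\sum_j\lambda_j$ yields a vector $\bar c$ in the convex hull of integral-optimal coefficient vectors with $\bar c\le \mathbf 2/\sum_j\lambda_j$. One must then argue that $\sum_j\lambda_j\ge1$ can be assumed. I would justify this by noting that each $c_{\cyclebasis_i}$ has total mass $\mathbf 1^\top c_{\cyclebasis_i}$, which is at least the mass of $c_\cyclebasis$ up to the constant $2\mu$ comparison. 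Concretely, I would test the decomposition against the uniform weight $\mathbf 1$ and use that all bases of cycles have $\mathbf 1^\top c\ge$ the minimum cycle basis length. If $\sum_i\lambda_i\ge1$, the displayed chain gives $2\mathbf 1^\top\weight>2\mathbf 1^\top\weight$, a contradiction, and this proves the lemma. Securing $\sum_i\lambda_i\ge1$, or replacing it by the right inequality, is the step I expect to need the most care.
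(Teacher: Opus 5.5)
Your chain of inequalities is the right argument, but the proposal stops where the proof has to close. Everything hinges on $\sum_i\lambda_i\ge 1$, and this cannot be derived from a decomposition $\mathbf 2=\sum_i\lambda_i c_{\cyclebasis_i}+r$ with only $\lambda_i\ge 0$ and $r\ge\mathbf 0$. Under that purely conic reading the slack $r$ absorbs everything: take $\mathbf 2=\varepsilon\, c_{\cyclebasis_1}+(\mathbf 2-\varepsilon\, c_{\cyclebasis_1})$ for small $\varepsilon>0$ (or simply $k=0$, $r=\mathbf 2$). Then $\mathbf 2\in\intcone$ for every graph, and the lemma would force $\forbiddenminors=\emptyset$, contradicting Example~\ref{ex: K8 is opt-out}. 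For the same reason your proposed repairs cannot succeed. Pairing the decomposition with any nonnegative weight $u$, whether $u=\mathbf 1$ or $u=w+t\mathbf 1$, gives only $2\,\mathbf 1^\top u\ge(\sum_i\lambda_i)\min_i c_{\cyclebasis_i}^\top u$, which is an \emph{upper} bound on $\sum_i\lambda_i$. The ``total mass up to $2\mu$'' comparison does not yield a lower bound either.

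The missing idea is that the normalization belongs to the intended definition of $\intcone$; it is not something to prove. The set $\intcone$ consists of the vectors that dominate a \emph{convex} combination of the $c_{\cyclebasis_i}$ with $\cyclebasis_i\in\optintbases$, i.e.\ $\lambda_i\ge 0$ and $\sum_i\lambda_i=1$. Equivalently one may require $\sum_i\lambda_i\ge 1$, since $c_{\cyclebasis_i}\ge\mathbf 0$ lets any excess be moved into the orthant. The printed definition omits this constraint, but it is how the set is used throughout: \eqref{LP} imposes $\sum_{\cyclebasis}\lambda_{\cyclebasis}=1$, the text speaks of domination ``by a convex combination'', and Examples~\ref{ex: K5 is optin} and~\ref{ex: K6 is optin} use coefficients summing to $10/9$ and $1$.

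With this normalization your argument closes in one line, and it is exactly the paper's proof: $w^\top\mathbf 2\ge\sum_i\lambda_i\, w(\cyclebasis_i)\ge\min_j w(\cyclebasis_j)>w(\cyclebasis)=c_{\cyclebasis}^\top w\ge w^\top\mathbf 2$. The strict step is Observation~\ref{obs:optout-basic}, and the last step is Lemma~\ref{lem: minimum not integer cycle basis in X are greater than two}. No perturbation of $w$ is needed, the case $k=0$ does not arise, and neither $\mathbf 1^\top w>0$ nor $c_{\cyclebasis}^\top w>0$ is required.
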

\begin{proof}
    In this case, there are integral cycle bases \(\cyclebasis_1,\dots,\cyclebasis_k\) such that \[
\mathbf{2} \geq \sum_{i=1}^k \lambda_i c_{\cyclebasis_i}, \]
so that for any non-negative \(\weight\) holds
\[
\weight^\top\mathbf{2}
\geq
\sum_{i=1}^k\lambda_i\weight(\cyclebasis_i)
\geq
\weight(\cyclebasis_j)
\]
for some \(j\in\{1,\dots,k\}\). If $\digraph \in \forbiddenminors$ as certified by $\cyclebasis \in \optnotintbases$, then $w(\cyclebasis) = w^\top c_\cyclebasis \geq w^\top \mathbf{2}$ by Lemma~\ref{lem: minimum not integer cycle basis in X are greater than two}, so that $w(\cyclebasis) \geq w(\cyclebasis_j)$, contradiction.
\end{proof}

We can now see that while all planar graphs are opt-in, the family of opt-in graphs is strictly larger than the family of planar graphs, as demonstrated by the complete graph on five vertices:

\begin{example}[$K_5$ is opt-in]\label{ex: K5 is optin}
\vspace{1em}
\begin{figure}[h!]
    \centering
    \begin{tikzpicture}[scale = 2,
    every node/.style={circle,draw,minimum size=3mm,inner sep=1pt},
    edge/.style={thick},
    treeedge/.style={edge,draw=\treecol},
    otheredge/.style={edge,draw=skyblue!50}]

\coordinate (v1) at (90:1);
\coordinate (v2) at (18:1);
\coordinate (v3) at (-54:1);
\coordinate (v4) at (-126:1);
\coordinate (v5) at (162:1);

\node (1) at (v1) {$1$};
\node (2) at (v2) {$2$};
\node (3) at (v3) {$3$};
\node (4) at (v4) {$4$};
\node (5) at (v5) {$5$};

\foreach \i/\j in {1/2,1/3,1/4,1/5,2/3,2/4,2/5,3/4,3/5,4/5} {
    \draw[otheredge] (\i) -- (\j);
}

\foreach \j in {2,3,4,5} {
    \draw[treeedge] (1) -- (\j);
}

\draw[\ared,line width=1pt] (1) -- (2);
\node[draw=none,text=\ared,inner sep=1pt] at ($(1)!0.5!(2)+(0.05,0.08)$) {$a$};

\draw[\bluecycle,line width=1pt] (2) -- (3);
\draw[\bluecycle,line width=1pt] (2) -- (4);
\draw[\bluecycle,line width=1pt] (2) -- (5);

\node[draw=none,text=\bluecycle,inner sep=1pt] at ($(2)!0.5!(3)+(0.15,0.05)$) {$C_1$};
\node[draw=none,text=\bluecycle,inner sep=1pt] at ($(2)!0.5!(4)+(-0.05,+0.15)$) {$C_2$};
\node[draw=none,text=\bluecycle,inner sep=1pt] at ($(2)!0.5!(5)+(0.03,+0.15)$) {$C_3$};

\begin{scope}[shift={(1.3,0.7)}, every node/.style={draw=none,circle=false,inner sep=1pt,minimum size=0pt,anchor=west,font=\small}]
    \draw[treeedge] (0,0) -- (0.4,0);
    \node at (0.5,0) {star tree $\startree_1$};

    \draw[vividorchid, line width = 1pt] (0,-0.3) -- (0.4,-0.3);
    \node at (0.5,-0.3) {$\Arc \in \Arcs{\startree_1}$};

    \draw[skyblue,line width=1pt] (0,-0.6) -- (0.4,-0.6);
    \node at (0.5,-0.6)  {cycles $C_1,C_2,C_3 \in \cyclebasis^*_1$ through $a$};

    \draw[skyblue!50,line width=1pt] (0,-0.9) -- (0.4,-0.9);
    \node at (0.5,-0.9)  {Remaining chords in \(\startree_1^*\)};
\end{scope}

\end{tikzpicture}
\caption{Fundamental cycle basis induced by the star $\startree_1$ centered at vertex $1$ in $\completegraph{5}$. A fixed $\Arc \in \Arcs{\startree_1}$ is contained in exactly three fundamental cycles $C_1, C_2, C_3$.}\label{fig: K5 star base}
\end{figure}

For the complete graph $K_5$ we can consider the ``star'' spanning trees $\startree_\vertex$ at each vertex $\vertex \in \{1, \dots, 5\}$. For the corresponding cycle basis \(\cyclebasis^*_\vertex\), we have \(c_{\cyclebasis^*_\vertex}(\Arc)=3\) if \(\Arc \in \Arcs{\startree_\vertex}\), since \(\Arc\) is contained in exactly three cycles of \(\cyclebasis^*_\vertex\), and \(c_{\cyclebasis^*_\vertex}(\Arc)=1\) otherwise (see Figure~\ref{fig: K5 star base}). Averaging the coefficient vectors of the five star bases \(\cyclebasis^*_\vertex\), \(\vertex\in\{1,\ldots,5\}\), yields
\[
\mathbf{2}
= \frac{2}{9}\mathbf{9}
= \frac{2}{9}\sum_{\vertex=1}^5 c_{\cyclebasis^*_\vertex}.
\]
 By Lemma~\ref{lem:dominate-2}. $\completegraph 5 \notin \forbiddenminors$. Since for every $e \in A(K_5)$, $\completegraph 5\setminus\{e\}$ is planar and hence \mcbeq, it follows that $\completegraph 5$ itself is \mcbeq.
\end{example}

In general, even for \mcbeq\ graphs, \(\mathbf{2}\) need not be dominated by a convex combination of integral cycle basis vectors.

\begin{example}[\(\mathbf 2 \notin \intcone\), but \(\digraph \notin \forbiddenminors\)]\label{ex: not 2-cycle basis coverable}
 The graph $\digraph$ obtained from the complete graph $K_7$ on the vertices $1, \dots, 7$ with the two arcs $(5, 7)$ and $(6, 7)$ removed 
has $|\Darcs|=19$ and cyclomatic number $\mu(D)=13$. We will see later that $K_7$ and hence $\digraph$ are opt-in. Under the uniform
weight $\weight=\mathbf 1$, the minimum cycle basis has weight
$39=3\times13$, achieved by a basis of $13$ triangles. If $\mathbf 2 \in \intcone$, i.e.,
$\mathbf 2 \geq \sum_{i=1}^k\lambda_i c_{\cyclebasis_i}$, then $38=2\times19=
\weight^\top\mathbf 2\ge\min_{i=1,\dots,k}\weight(\cyclebasis_i)$, giving
a cycle basis of weight below $39$ -- impossible.
\end{example}

We therefore generalize the argument above.

\begin{lemma}\label{lem:domination-certificate}
    Let \(\Cervec\subseteq\mathbb Z_{\geq0}^{\Darcs}\) be a finite set such that \(\Cervec \subset \intcone\). If for every non-integral $\cyclebasis$ exists a vector $b \in \Cervec$ such that $c_\cyclebasis \geq b$, then $\digraph \notin \forbiddenminors$.
\end{lemma}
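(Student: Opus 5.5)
We argue by contradiction, following the proof of Lemma~\ref{lem:dominate-2} with the vector \(\mathbf 2\) replaced by a vector \(\cervec \in \Cervec\) adapted to the non-integral basis at hand. Suppose \(\digraph \in \forbiddenminors\). By Observation~\ref{obs:optout-basic} there is a basis \(\cyclebasis \in \optnotintbases\) and a weight \(\weight \geq \mathbf 0\) for which \(\cyclebasis\) is the unique minimum directed cycle basis. In particular \(c_\cyclebasis^\top \weight < c_{\cyclebasis'}^\top \weight\) for every integral basis \(\cyclebasis'\), since such a \(\cyclebasis'\) differs from the non-integral \(\cyclebasis\).

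Since \(\cyclebasis\) is non-integral, the hypothesis gives a vector \(\cervec \in \Cervec\) with \(c_\cyclebasis \geq \cervec\). Because \(\cervec \in \intcone\), we can write
\[
\cervec = \sum_{i=1}^k \lambda_i c_{\cyclebasis_i} + r
\]
with \(\cyclebasis_i \in \optintbases\), \(\lambda_i \geq 0\) and \(r \geq \mathbf 0\). We may assume \(\sum_i \lambda_i \geq 1\). This normalization is the delicate point, exactly as in Lemma~\ref{lem:dominate-2}, where "dominated by a convex combination" is the intended meaning of membership in \(\intcone\). If the cone allows arbitrary nonnegative \(\lambda_i\), I will interpret it as convex combinations (\(\sum_i \lambda_i = 1\)) plus \(\bR^{\Darcs}_{\geq 0}\), matching the usage in Example~\ref{ex: K5 is optin} and Example~\ref{ex: not 2-cycle basis coverable}. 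I would state this reading explicitly.

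Pairing with \(\weight \geq \mathbf 0\) then gives
\[
\weight(\cyclebasis) = c_\cyclebasis^\top \weight \geq \cervec^\top \weight \geq \sum_i \lambda_i\, \weight(\cyclebasis_i) \geq \min_i \weight(\cyclebasis_i).
\]
So some integral basis \(\cyclebasis_j\) satisfies \(\weight(\cyclebasis_j) \leq \weight(\cyclebasis)\). This contradicts the uniqueness of \(\cyclebasis\) as a minimizer, since \(\cyclebasis_j \neq \cyclebasis\) (one is integral and the other is not). Hence \(\digraph \notin \forbiddenminors\).

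The main obstacle is only the normalization of the coefficients \(\lambda_i\) discussed above; everything else is a direct transcription of Lemma~\ref{lem:dominate-2}. Note also that, unlike Lemma~\ref{lem:dominate-2}, this argument never invokes Lemma~\ref{lem: minimum not integer cycle basis in X are greater than two}: the domination \(c_\cyclebasis \geq \cervec\) is supplied directly by the hypothesis.
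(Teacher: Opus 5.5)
Your proof is correct and is the paper's argument: the paper just says the proof of Lemma~\ref{lem:dominate-2} goes through with $\mathbf{2}$ replaced by $b$, and, as you note, the hypothesis $c_\cyclebasis \geq b$ takes the place of Lemma~\ref{lem: minimum not integer cycle basis in X are greater than two}. Your explicit normalization $\sum_i \lambda_i = 1$ (or $\sum_i \lambda_i \geq 1$) is a worthwhile clarification: the definition of $\intcone$ as written leaves the $\lambda_i$ unconstrained, and it would then contain all of $\bR_{\geq 0}^{\Darcs}$, which makes the lemma false; the paper's proof of Lemma~\ref{lem:dominate-2} tacitly uses convex combinations.
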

\begin{proof}
    This is analogous to Lemma~\ref{lem:dominate-2}, replacing $\mathbf{2}$ with $b$.
\end{proof}

Observation~\ref{obs:optout-basic} implies that a graph is \mcbeq\ if and only if every strictly supported Pareto optimal solution with respect to the cost vectors \(c_{\cyclebasis}\) is integral. While this connection implies that the \mcbeq-decision problem could theoretically be modeled and solved using traditional multi-objective optimization frameworks (such as Benson's algorithm~\cite{Benson1998}), such an approach is highly impractical: the computational complexity of multi-objective scalarization scales poorly with the number of objectives, which in our setting corresponds to the cardinality of the arc set \(|\Darcs|\). We therefore develop an ad-hoc recursive algorithm which classifies whether a graph \(\digraph\) is \mcbeq, and if not, determines the forbidden minors of \(\digraph\). This is done by recursively checking whether each candidate is \notmcbeq\ or not in \(\forbiddenminors\): non-membership in \(\forbiddenminors\) is certified by finding a finite set \(\Cervec\) as in Lemma~\ref{lem:domination-certificate}. Otherwise, if during the search for \(\Cervec\) a non-integral cycle basis is encountered, then the algorithm outputs \notmcbeq (cf.\ Proposition~\ref{prop: MICBP in optin is in P}). The same procedure is then applied recursively to all $1$-minors (minors obtained by deleting or contracting one arc), until one of the known \mcbeq\ cases is met, e.g., \(\card{\vertices{\digraph}}\leq 5\) (Example~\ref{ex: K5 is optin}). If all $1$-minors are \mcbeq\ and \(\digraph\) is not in \(\forbiddenminors\), then \(\digraph\) is \mcbeq\ as well; otherwise, if \(\digraph\) is \notmcbeq\ and every proper minor is \mcbeq\, this proves \(\digraph \in \forbiddenminors\). 

 We then apply this algorithm to characterize \mcbeq-ness for complete graphs, showing that \(\completegraph{n}\) is \mcbeq\ if and only if \(n\leq 7\). We describe the details of the algorithm in the subsequent subsections.

\subsection{The domination linear program}

To check that a vector \(\cervec\in\Cervec\) is dominated by a convex combination of integral cycle bases, i.e., $\cervec \in \intcone$, our algorithm relies on the following linear program, which has optimal value \(0\) if and only if \(\cervec\) is dominated by a convex combination of the coefficient vectors \(c_{\cyclebasis}\) for \(\cyclebasis \in \mathfrak{B}_O\):
\begin{equation}\tag{LP}\label{LP}
\begin{aligned}
    &\text{minimize} & z & \\
    &\text{s.t.} & \sum_{\cyclebasis\in\mathfrak{B}_O}\lambda_\cyclebasis c_\cyclebasis - z\mathbf{1} &\le \cervec, \\
    & & \sum_{\cyclebasis\in\mathfrak{B}_O}\lambda_\cyclebasis &= 1, \\
    & & \lambda_\cyclebasis &\geq0, \ \cyclebasis\in\mathfrak{B}_O, \qquad z\ge0.
\end{aligned}
\end{equation}
The corresponding dual problem is:
\begin{align*}
    &\text{maximize} & y-\cervec^\top w\\
    &\text{s.t.} & w^\top c_\cyclebasis &\geq y,\ \cyclebasis\in\mathfrak{B}_O, \qquad \mathbf{1}^\top w\le 1,\ w\ge\mathbf{0}.
\end{align*}
Executing column generation given a restricted dual solution $(w^*,y^*)$ amounts to finding a cycle basis $\cyclebasis$ that minimizes $(w^*)^\top c_\cyclebasis$, via a standard minimum cycle basis algorithm (e.g., De Pina's algorithm~\cite{dePina1995,cyclebasisintro}). Due to the equivalence of separation and optimization, the LP can hence be solved in polynomial time. If the pricing step ever produces a non-integral cycle basis, \(\digraph\) is \notmcbeq{} by Proposition~\ref{prop: MICBP in optin is in P}. Otherwise, upon termination, the LP determines whether \(\cervec\) is dominated. If so ($z^* = 0)$, then all $\cyclebasis \in \optbases$ with $\lambda_\cyclebasis > 0$ are also in $\optintbases$, so that $\cervec \in \intcone$. If \(\cervec\) is not dominated ($z^* > 0$), it cannot serve as a certificate on its own and must be systematically augmented into tighter lower-bounding vectors, as described in the next section.

\subsection{A recursive parity-based domination test}

The last step consists of finding a set \(\Cervec\subset\bZ_{\ge0}^{\Darcs}\) that dominates at least one element \(\mathfrak{B}_N\) and is itself dominated by \(\convex(\{c_{\cyclebasis'}:\cyclebasis'\in\optintbases\})\). For a simple cycle $\cycle$ of $\digraph$, $e_\Arc$ denotes the standard basis vector at $\Arc \in \Darcs$, and $\mathbf 1_\cycle$ the incidence vector of $\cycle$'s arc set. Since the algorithm tests membership in \(\forbiddenminors\) at each step, we assume \(\digraph\in\forbiddenminors\) throughout and derive an algorithmically checkable contradiction otherwise. By Lemma~\ref{lem: minimum not integer cycle basis in X are greater than two}, every \(\cyclebasis\in\optnotintbases\) has \(c_\cyclebasis\ge\mathbf 2\) whenever \(\digraph\in\forbiddenminors\). If \(\mathbf 2 \in \intcone\), we are done with \(\Cervec=\{\mathbf{2}\}\); otherwise we must consider vectors larger than \(\mathbf 2\) that still provably lower bound \(c_\cyclebasis\). The following lemma describes the augmentation step.

\begin{lemma}[Recursive parity dichotomy]\label{lem:parity-dichotomy-recursive}
Let $\cyclebasis$ be a cycle basis of $\digraph$ with
$c_{\cyclebasis}\ge \domvec \in \bZ_{\geq0}^{\Darcs}$, where \(\domvec(\delta(\vertex)) \in \bZ^{\Darcs}\) is even for every \(\vertex\in\Dvertices\) and $c_{\cyclebasis}\ne \domvec$. Then at least
one of the following holds:
\begin{enumerate}
    \item\label{item:edge-case} there is an arc $\Arc\in\Darcs$ with $c_{\cyclebasis}\ge \domvec' \coloneqq\domvec+2e_{\Arc}$;
    \item\label{item:cycle-case} there is an undirected cycle $\cycle$ of
    $\digraph$ with $c_{\cyclebasis}\ge \domvec'\coloneqq\domvec+\mathbf{1}_C$.
\end{enumerate}
Furthermore \(\domvec'(\delta(\vertex)) \in \bZ^{\Darcs}\) is even for every \(\vertex\in\Dvertices\).
\end{lemma}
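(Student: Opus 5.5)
The key observation is that $c_\cyclebasis$ itself has even degree sums at every vertex. Here $c_\cyclebasis(\delta(\vertex)) \coloneqq \sum_{\Arc\in\delta(\vertex)} c_\cyclebasis(\Arc)$ counts, over all cycles $\cycle\in\cyclebasis$, the number of arcs of $\cycle$ incident to $\vertex$. Each cycle in a cycle basis is simple, so it meets $\vertex$ in either $0$ or $2$ arcs. Hence $c_\cyclebasis(\delta(\vertex)) = 2\,|\{\cycle\in\cyclebasis : \vertex\in\vertices{\cycle}\}|$ is even for every vertex. (Self-loops need not be considered, since the cycles are simple in the sense of the preliminaries.)

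Now set $d \coloneqq c_\cyclebasis - \domvec \in \bZ_{\ge 0}^{\Darcs}$, with $d \neq \nullv$ by assumption. Since both $c_\cyclebasis$ and $\domvec$ have even degree sums at every vertex, so does $d$. Let $O \coloneqq \{\Arc : d(\Arc) \text{ odd}\}$. The parity of $d(\delta(\vertex))$ equals the parity of the number of arcs of $O$ at $\vertex$, so every vertex has even degree in the subgraph $O$ (viewed as undirected). We then split into two cases.

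\emph{Case $O=\emptyset$.} Every entry of $d$ is even, and since $d\ne\nullv$ there is an arc $\Arc$ with $d(\Arc)\ge 2$. Then $c_\cyclebasis \ge \domvec + 2e_\Arc$, which gives case~\ref{item:edge-case}.

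\emph{Case $O\neq\emptyset$.} The subgraph $O$ is nonempty with all degrees even, so it contains a simple undirected cycle $\cycle \subseteq O$: a nonempty even-degree graph has no leaves, and walking without reusing arcs must eventually close a cycle. Every $\Arc\in\cycle$ has $d(\Arc)$ odd and hence at least $1$, so $c_\cyclebasis \ge \domvec + \mathbf 1_\cycle$, which gives case~\ref{item:cycle-case}. A parallel pair counts as a $2$-cycle here, which is harmless since the graphs of interest are simple by Lemma~\ref{lem:simple-graph}.

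It remains to check parity for the new vector $\domvec'$. Adding $2e_\Arc$ changes each degree sum by $0$ or $2$. Adding $\mathbf 1_\cycle$ changes the degree sum at each vertex of $\cycle$ by exactly $2$ and leaves all other vertices unchanged. So $\domvec'$ again has even degree sums at every vertex.

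The only real step is the first observation, namely that $c_\cyclebasis$ has even degree sums because basis cycles are simple. The rest is the standard fact that an even subgraph contains a cycle.
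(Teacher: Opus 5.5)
Your proof is correct and is essentially the paper's parity argument: the arcs where $c_{\cyclebasis} - \domvec$ is odd form an even-degree subgraph and hence contain a cycle, and if there are no such arcs then some entry of $c_{\cyclebasis}$ exceeds $\domvec$ by at least $2$. The differences are minor. You split on whether the odd set is empty rather than on whether case~1 fails. You also spell out why $c_{\cyclebasis}$ has even degree at every vertex (basis cycles are simple) and why $\domvec'$ stays even, both of which the paper leaves implicit.
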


\begin{proof}
If \ref{item:edge-case} does not hold then \(\domvec+\mathbf{1} \geq c_{\cyclebasis}\geq \domvec\). The arc set
\begin{align*}
F &= \{\Arc \in \Darcs\mid c_\cyclebasis(\Arc) - \domvec(\Arc) \text{ is odd}\} \\
&= \{\Arc \in \Darcs\mid c_\cyclebasis(\Arc) - \domvec(\Arc) = 1\},
\end{align*}
is the support of \((\domvec + c_\cyclebasis) \bmod 2\). Since both \(\domvec\) and \(c_\cyclebasis\) have even degree at every \(\vertex\), $F$ must hence contain a simple cycle \(\cycle\), and \(c_\cyclebasis(\Arc) \geq \domvec(\Arc) + 1\) for every arc \(\Arc \in \cycle\).
\end{proof}

We can now formulate an algorithm (Algorithm~\ref{alg:refined-domination}) that can decide whether a graph is \notmcbeq{} or not a forbidden minor by implicitly enumerating a set $\Cervec$ satisfying the hypotheses of Lemma~\ref{lem:domination-certificate} making use of the restrictions imposed by Lemma~\ref{lem:parity-dichotomy-recursive}.

\begin{algorithm}
\caption{Refined domination test}
\label{alg:refined-domination}
\begin{algorithmic}[1]
\Require A $2$-connected simple graph $\digraph$.
\Ensure $\digraph$ is \notmcbeq{} or not in $\forbiddenminors$
\Procedure{DominationTest($\digraph$)}{}
\State $\mathcal O \gets \{\mathbf{2}\}$ \Comment{open nodes}
\State $\mathcal Q \gets \varnothing$ \Comment{closed, dominated nodes}
\While{$\mathcal O \neq \varnothing$}
    \State Extract some $\domvec \in \mathcal O$
    \State $z^* \gets $ value of \eqref{LP} with target $\domvec$
    \If{$z^* = 0$ and the pricing step \eqref{LP} produced only integral cycle bases}
        \State $\mathcal Q \gets \mathcal Q \cup \{\domvec\}$
    \Else
        \If{the pricing step in \eqref{LP} produced a non-integral cycle basis}
            \State \Return ``$\digraph$ is \notmcbeq{}''
        \EndIf
        \ForAll{$\Arc \in \Darcs$}
            \State $\domvec' \gets \domvec + 2e_\Arc$
            \If{$\domvec' \not\ge b$ for all $b\in\mathcal Q$ \textbf{and} 
                  $\domvec'(\Arc') \le \cyclerank\digraph$ for all $\Arc'\in\Darcs$}
                \State $\mathcal O \gets \mathcal O \cup \{\domvec'\}$
            \EndIf
        \EndFor
        \ForAll{simple undirected cycles $\cycle$ of $\digraph$}
            \State $\domvec' \gets \domvec + \mathbf 1_\cycle$
            \If{$\domvec' \not\ge b$ for all $b\in\mathcal Q$ \textbf{and}
                  $\domvec'(\Arc) \le \cyclerank\digraph$ for all $\Arc\in\Darcs$}
                \State $\mathcal O \gets \mathcal O \cup \{\domvec'\}$
            \EndIf
        \EndFor
    \EndIf
\EndWhile
\State \Return ``$\digraph\notin\forbiddenminors$''
\EndProcedure
\end{algorithmic}
\end{algorithm}

\begin{theorem}[Refined domination test]\label{thm:refined-domination}
Algorithm~\ref{alg:refined-domination} correctly decides whether $\digraph$ is \notmcbeq{} or not in $\forbiddenminors$.
\end{theorem}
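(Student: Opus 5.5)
The plan is to prove three things: the algorithm terminates, the answer ``\notmcbeq'' is sound, and the answer ``$\digraph\notin\forbiddenminors$'' is sound. The last part is the heart of the argument. It is a loop-invariant argument that, assuming $\digraph\in\forbiddenminors$, tracks a fixed non-integral optimal basis through the search tree.

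\emph{Termination.} Every vector ever placed in $\mathcal O$ lies in $\bZ_{\ge0}^{\Darcs}$ with all entries at most $\cyclerank\digraph$, by the filter in the two loops. Each child $\domvec'$ strictly exceeds its parent $\domvec$ in at least one coordinate. Hence the search lives in a finite lattice, and the tree of generated vectors has bounded depth. Each extraction also removes a vector from $\mathcal O$. So the while-loop ends after finitely many iterations, and each iteration is finite because the LP can be solved by column generation as in the previous subsection.

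\emph{Soundness of ``\notmcbeq''.} This answer is returned only if some pricing step produced a non-integral basis $\cyclebasis$. Such a $\cyclebasis$ is a minimum directed cycle basis for the nonnegative dual weight $w^*$, computed with a fixed lexicographic tie-breaking rule. If $\digraph$ were \mcbeq, Proposition~\ref{prop: MICBP in optin is in P} would force $\cyclebasis$ to be integral. Hence $\digraph$ is \notmcbeq.

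\emph{Soundness of ``$\digraph\notin\forbiddenminors$''.} Suppose the algorithm returns this answer, yet $\digraph\in\forbiddenminors$. Pick $\cyclebasis\in\optnotintbases$, the unique minimum basis for some $w\ge\nullv$ (Observation~\ref{obs:optout-basic}). We record three facts about $c_\cyclebasis$:
\begin{itemize}
\item $c_\cyclebasis\ge\mathbf 2$ by Lemma~\ref{lem: minimum not integer cycle basis in X are greater than two}.
\item $c_\cyclebasis\le\cyclerank\digraph$ entrywise, since there are only $\cyclerank\digraph$ cycles in a basis.
\item $c_\cyclebasis$ has even degree at every vertex, since each cycle contributes $0$ or $2$ to $c(\delta(\vertex))$.
\end{itemize}
Every vector in $\mathcal Q$ lies in $\intcone$: it is added only when $z^*=0$ and all generated columns were integral, so it is dominated by a convex combination of $c_{\cyclebasis'}$ with $\cyclebasis'\in\optintbases$. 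We also note that every vector in $\mathcal O\cup\mathcal Q$ has even degree at every vertex. This holds for $\mathbf 2$, and it is preserved by adding $2e_\Arc$ or $\mathbf 1_\cycle$.

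The invariant is: \emph{there exists $\domvec\in\mathcal O\cup\mathcal Q$ with $c_\cyclebasis\ge\domvec$.} It holds initially with $\domvec=\mathbf 2$. Now consider an extracted $\domvec$ with $c_\cyclebasis\ge\domvec$.
\begin{itemize}
\item If $\domvec$ goes to $\mathcal Q$, the invariant persists.
\item Otherwise $z^*>0$, and we claim $c_\cyclebasis\neq\domvec$. Indeed $\cyclebasis\in\optbases$ is a feasible column, and $\lambda_\cyclebasis=1$ would give $z=0$.
\end{itemize}
In the second case Lemma~\ref{lem:parity-dichotomy-recursive} applies, since $\domvec$ and $c_\cyclebasis$ both have even degrees. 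It yields a child $\domvec'$ of the form $\domvec+2e_\Arc$ or $\domvec+\mathbf 1_\cycle$ with $c_\cyclebasis\ge\domvec'$. This child satisfies the entrywise bound because $\domvec'\le c_\cyclebasis\le\cyclerank\digraph$. It is therefore added to $\mathcal O$ unless $\domvec'\ge q$ for some $q\in\mathcal Q$, and in that case $c_\cyclebasis\ge q$ already witnesses the invariant.

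At termination $\mathcal O=\emptyset$, so some $q\in\mathcal Q\subset\intcone$ satisfies $c_\cyclebasis\ge q$. As in Lemma~\ref{lem:dominate-2} and Lemma~\ref{lem:domination-certificate}, this gives an integral $\cyclebasis_j$ with $w(\cyclebasis)=w^\top c_\cyclebasis\ge w^\top q\ge w(\cyclebasis_j)$. That contradicts the uniqueness of $\cyclebasis$ as the minimum basis, since $\cyclebasis$ is non-integral.

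I expect the delicate step to be the invariant maintenance. Two points need care there: that an extracted $\domvec$ with $z^*>0$ cannot equal $c_\cyclebasis$, which is what makes the parity dichotomy applicable, and that the pruning rules never discard the branch followed by $c_\cyclebasis$ without leaving another witness in $\mathcal Q$.
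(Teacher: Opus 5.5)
Your proof is correct and follows the paper's route: termination from the bounded search space, Proposition~\ref{prop: MICBP in optin is in P} for soundness of the ``\notmcbeq{}'' answer, and, when $\digraph\in\forbiddenminors$, tracking a fixed $\cyclebasis\in\optnotintbases$ with $c_{\cyclebasis}\ge\mathbf 2$ through the search via Lemma~\ref{lem:parity-dichotomy-recursive}. Your loop invariant (some $b\le c_{\cyclebasis}$ always stays in $\mathcal O\cup\mathcal Q$, and ending with such a $b\in\mathcal Q\subseteq\intcone$ contradicts the uniqueness of $\cyclebasis$) supplies the justification that the paper only asserts, namely that ``$\digraph\notin\forbiddenminors$'' cannot be returned before the branch leading to $c_{\cyclebasis}$ has been enumerated.
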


\begin{proof}
Since for all cycle bases $\cyclebasis$ holds \(c_\cyclebasis \leq \cyclerank\digraph \mathbf{1} \), we can restrict the search to only \(\domvec \leq \cyclerank\digraph \mathbf{1} \). The procedure hence terminates in finitely many steps, since the number of integral vectors \(\mathbf 2 \leq \domvec \leq \cyclerank\digraph \mathbf{1}\) is finite. 

Suppose that $\digraph \in \forbiddenminors$. Then there is a cycle basis $\cyclebasis \in \optnotintbases$ with $c_\cyclebasis \geq \mathbf{2}$ (Lemma~\ref{lem: minimum not integer cycle basis in X are greater than two}). The domination linear program \eqref{LP} then must detect \notmcbeq{}-ness for $b = c_\cyclebasis$, as the optimal value $z^* = 0$ can only be reached by generating the column for $\cyclebasis$. As ``$\digraph \notin \forbiddenminors$'' can only be returned after $c_\cyclebasis$ has been enumerated by the algorithm, we therefore decide correctly. 

Assume now that $\digraph \notin \forbiddenminors$. There is nothing to show if $\digraph$ is also \notmcbeq{}. If $\digraph$ is \mcbeq{}, then by Proposition~\ref{prop: MICBP in optin is in P}, the pricing step of \eqref{LP} will never require a non-integral cycle basis, so that the algorithm returns that $\digraph \notin \forbiddenminors$.
\end{proof}

\subsubsection*{Pruning techniques}

Since constructing \(\Cervec\) via Lemma~\ref{lem:parity-dichotomy-recursive} alone yields enormous \(\opennodes\), to make algorithm \ref{alg:X search} usable we use various pruning techniques to diminish the size of the set \(\opennodes\).

\paragraph*{Chordless cycles.\;} As we know by Lemma~\ref{lem:chordless-strict-metric} that we can restrict to chordless cycle bases, we can restrict the number of candidates $b$ in Algorithm~\ref{alg:refined-domination} by requiring
\[ b(e) \leq \min \left\{\card{\{\cycle \text{ a chordless cycle of }\digraph : \Arc\in\cycle\}}, \cyclerank\digraph \right\} \]
for all $e \in \Darcs$.

\paragraph*{Arc count pruning.\;} Depending on the structure of the graph, we may know without running \eqref{LP} that \(\domvec\) cannot be in $\intcone$, see also Example~\ref{ex: K7 is optin} later on. For example, this occurs if $\mathbf 1^\top\domvec$ is less than the weight of a minimum cycle basis of $\graph$ under $\weight=\mathbf 1$.

\paragraph*{Orbit pruning.\;} A graph automorphism $\sigma\in\mathrm{Aut}(\graph)$ acts on $\Darcs$ such that for $b \in\bZ_{\ge0}^{\Darcs}$, we have $b \in \intcone$ if and only if $\sigma\cdot b \in \intcone$. In particular, it suffices to consider only one representative $b$ per orbit.

\subsection{An \mcbeq-\notmcbeq\ decision algorithm}
By executing a recursive search, we systematically determine the membership of $\digraph$ and its minors within the forbidden minor set $\forbiddenminors$. Algorithm \ref{alg:X search} establishes graph classification via three structural cases:
\begin{enumerate}
    \item If all proper minors of $\digraph$ are \mcbeq\ but $\digraph$ itself is \notmcbeq, then $\digraph$ is in $\forbiddenminors$.
    \item If all proper minors of $\digraph$ are \mcbeq\ and the refined domination test certifies that \(\digraph\) is not in \(\forbiddenminors\), then $\digraph$ is \mcbeq.
    \item If at least one proper minor of $\digraph$ is \notmcbeq, then $\digraph$ inherits this non-integrality by minor-closure (Theorem~\ref{thm: optin is minor closed}), meaning it is \notmcbeq\ but does not belong to the minimal set $\forbiddenminors$.
\end{enumerate}

\begin{algorithm}
\caption{Recursive Search for Forbidden Minors $\forbiddenminors$}
\label{alg:X search}
\begin{algorithmic}[1]
\Require A $2$-connected graph $\digraph$.
\Ensure The classification of $\digraph$ (\mcbeq\ or \notmcbeq) and the accumulated set of forbidden minors $\forbiddenminors_{\text{local}}$.
\Procedure{SearchMinors($\digraph$)}{}
    \If{$|V(\digraph)| \leq 5$}
        \Comment{Base case for small graphs: \(\completegraph 5\) is \mcbeq.}
        \State \Return $(\text{\mcbeq},\emptyset)$
    \EndIf
    \State $\text{is\_minimal} \gets \text{true}$
    \State $\text{graph\_status} \gets \text{\mcbeq}$
    \State $\forbiddenminors_{\text{local}} \gets \emptyset$
    \For{each isomorphism class of $2$-connected simple minors with minimum degree $3$ of $\digraph$ obtained by a single arc deletion or contraction, with representative $\digraph'$}
        \State $(\text{status}(\digraph'),\forbiddenminors') \gets
        \textsc{SearchMinors}(\digraph')$
        \State $\forbiddenminors_{\text{local}} \gets
        \forbiddenminors_{\text{local}} \cup \forbiddenminors'$
        \If{$\text{status}(\digraph') = \text{\notmcbeq}$}
            \State $\text{graph\_status} \gets \text{\notmcbeq}$
            \State \label{alg-line:minimality-flag}
            $\text{is\_minimal} \gets \text{false}$
        \EndIf
    \EndFor
    \If{$\text{is\_minimal} = \text{true}$}\label{alg-line:domination-test}
        \If{$\textsc{DominationTest}(\digraph) = \text{\notmcbeq}$}
            \State $\text{graph\_status} \gets \text{\notmcbeq}$
            \State $\forbiddenminors_{\text{local}} \gets
            \forbiddenminors_{\text{local}} \cup \{\digraph\}$
        \EndIf
    \EndIf
    \State \Return $(\text{graph\_status},\forbiddenminors_{\text{local}})$
\EndProcedure
\end{algorithmic}
\end{algorithm}

\begin{theorem}\label{thm:algo}
Algorithm~\ref{alg:X search} is correct.
\end{theorem}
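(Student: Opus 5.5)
We will prove correctness by strong induction on the size of $\digraph$ (say on $\card{\Darcs}+\card{\Dvertices}$). The claim to establish is: \textsc{SearchMinors}$(\digraph)$ returns \mcbeq\ exactly when $\digraph$ is \mcbeq, and $\forbiddenminors_{\text{local}}$ is exactly the set of members of $\forbiddenminors$ among the (relevant) minors of $\digraph$ up to isomorphism.

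The base case $\card{\vertices{\digraph}}\le 5$ is handled as follows. Every simple graph on at most five vertices is a subgraph, hence a minor, of $\completegraph 5$, which is \mcbeq\ by Example~\ref{ex: K5 is optin}. Theorem~\ref{thm: optin is minor closed} then gives that every such graph is \mcbeq\ and that no forbidden minor lives there. Non-simple inputs need not be handled separately, since Lemma~\ref{lem:simple-graph} lets us restrict attention to simple graphs.

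For the inductive step, the first task is to justify that only the restricted family of 1-minors needs to be examined, namely 2-connected, simple, minimum degree $3$, taken up to isomorphism. Isomorphism invariance is immediate from Observation~\ref{obs:undirected-minors}. Every proper minor of $\digraph$ is a minor of some 1-minor $\digraph'$. If $\digraph'$ is not 2-connected, or has parallel arcs, loops, or vertices of degree $\le 2$, then the arguments of Lemma~\ref{lem: forbidden minors are 2-connected min degree 3} and Lemma~\ref{lem:simple-graph} show that its \mcbeq-status equals that of a smaller graph of the required form. That smaller graph is itself a minor of $\digraph$, obtained by splitting into blocks, suppressing degree-2 vertices, and deleting parallel copies. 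The key point is that it is either one of the enumerated 1-minors or a minor of one of them. This is the step I expect to be the main obstacle: the reduction must land inside the recursion tree. I would handle it by showing that suppressing a degree-2 vertex of a 1-minor yields a 2-minor of $\digraph$, which is a 1-minor of some other enumerated 1-minor (or of the block decomposition). If necessary, I would instead interpret the loop as ranging over the reduced forms of all 1-minors.

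With this in hand, the induction hypothesis gives correct statuses for all enumerated $\digraph'$. By minor-closedness (Theorem~\ref{thm: optin is minor closed}), all proper minors of $\digraph$ are \mcbeq\ if and only if every enumerated $\digraph'$ is \mcbeq. The proof then splits into the three cases listed before the algorithm.

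\emph{Case 1: some $\digraph'$ is \notmcbeq.} Then $\digraph$ is \notmcbeq\ by Theorem~\ref{thm: optin is minor closed}, and $\digraph\notin\forbiddenminors$ by minimality. The algorithm sets exactly this and skips the test.

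\emph{Case 2: all $\digraph'$ are \mcbeq\ and the test runs.} By Theorem~\ref{thm:refined-domination}, \textsc{DominationTest} either certifies \notmcbeq\ or certifies $\digraph\notin\forbiddenminors$.
\begin{itemize}
\item If it certifies \notmcbeq, then, since all proper minors are \mcbeq, $\digraph\in\forbiddenminors$, and it is correctly added.
\item If it certifies $\digraph\notin\forbiddenminors$, then $\digraph$ must be \mcbeq. Otherwise some minor of $\digraph$ lies in $\forbiddenminors$, and since $\digraph$ is not itself in $\forbiddenminors$, that minor would be a proper minor, contradicting that all proper minors are \mcbeq.
\end{itemize}

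Finally, $\forbiddenminors_{\text{local}}$ is correct because any forbidden minor of $\digraph$ is either $\digraph$ itself, which is covered by Case~2, or a minor of some enumerated $\digraph'$, which is collected by induction. Termination follows since each recursive call is on a strictly smaller graph and \textsc{DominationTest} terminates by Theorem~\ref{thm:refined-domination}.
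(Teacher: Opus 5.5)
Your argument is the paper's: induction over the minor poset, the base case via $\completegraph 5$ and Theorem~\ref{thm: optin is minor closed}, and the same three cases settled by minor-closure and Theorem~\ref{thm:refined-domination}. The paper passes over the restriction of the loop to 2-connected, simple, minimum-degree-3 one-step minors with a bare ``by transitivity''. Your fallback of ranging over reduced forms is the right way to close that point: any opt-out proper minor contains a member of $\forbiddenminors$, which is 2-connected, simple and of minimum degree $3$, and so remains a minor after passing to a block, deleting a parallel copy, or contracting an arc at a degree-$2$ vertex. Your first remedy, by contrast, is unproven, since the graph $\digraph/f$ (with $f$ an arc at the new degree-$2$ vertex) can itself have parallel arcs or new degree-$2$ vertices and so need not be among the enumerated minors.
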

\begin{proof}
We proceed by induction on the minor poset of $\digraph$. If some proper minor $\digraph'$ is verified \notmcbeq, then $\digraph$ is immediately \notmcbeq\ by minor-closure (Theorem~\ref{thm: optin is minor closed}). If $|V(\digraph)| \le 5$, then $\digraph$ is \mcbeq{} due to minor closure and Example~\ref{ex: K5 is optin}.

Otherwise, suppose all single-edge deletions and contractions of $\digraph$ are certified \mcbeq; by transitivity, every proper minor of $\digraph$ is \mcbeq, and Theorem~\ref{thm:refined-domination} applies. If the refined domination test terminates without ever finding a non-integral basis, $\digraph\notin\forbiddenminors$ by that proposition; combined with every proper minor being \mcbeq, $\digraph$ must itself be \mcbeq. Conversely, if the test finds a non-integral cycle basis, $\digraph$ is \notmcbeq; combined with every proper minor being \mcbeq, $\digraph$ is minimally \notmcbeq, i.e., $\digraph\in\forbiddenminors$.
\end{proof}

\subsection{Classification of complete graphs}

As an application of Algorithms~\ref{alg:refined-domination}~and~\ref{alg:X search}, we settle the \mcbeq{}/\notmcbeq{} status for complete graphs.

\begin{example}[$K_6$ is \mcbeq{}]\label{ex: K6 is optin}
We follow the execution of Algorithm~\ref{alg:X search} for \(\completegraph 6\). All proper minors are \mcbeq{}, so that we reach line~\ref{alg-line:domination-test} with $\forbiddenminors_{\text{local}} = \emptyset$ and $\texttt{is\_minimal} = \texttt{true}$. The refined domination test (Algorithm~\ref{alg:refined-domination}) for $K_6$ finds on the first iteration that $\mathbf{2} \in \intcone$. More precisely, for each of the six star bases $\cyclebasis_i^*$, each tree arc lies in $4$ of the $10$ cycles in $\cyclebasis_i^*$ and each of the $\binom 62$ arcs is a tree arc in exactly two of the six stars; summing over the six stars, $\tfrac16\sum_{i=1}^{6} c_{\cyclebasis_i^*} \equiv \mathbf{2}$. Therefore, $\mathbf{2}$ is a convex combination of the fundamental (and hence integral) cycle bases $\cyclebasis_i^*$.
So $\mathbf 2 \in \intcone$, and by Lemma~\ref{lem:dominate-2}, $\completegraph 6\notin\forbiddenminors$. Algorithm~\ref{alg:refined-domination} detects this by adding $\mathbf{2}$ to the dominated set $\mathcal Q$ and will terminate with $\mathcal O = \emptyset$ and ``$K_6 \notin \forbiddenminors$'' after the first iteration.
\end{example}

\begin{example}[$K_7$ is \mcbeq{}]\label{ex: K7 is optin}
Algorithm~\ref{alg:X search} proves \mcbeq{}-ness of \(\completegraph{7}\). We briefly sketch why $K_7 \notin \forbiddenminors$. For \(n \geq 7\), in \(\completegraph{n}\), $\mathbf 2$ is not dominated by a convex combination of coefficient vectors of integer cycle bases. For $K_7$, \eqref{LP} for $b = \mathbf{2}$ has the optimal value $z^* = \frac{1}{7}$. However, we know by Proposition~\ref{prop:structure-summary}, that if $K_7 \in \forbiddenminors$, then there is a cycle basis $\cyclebasis \in \optnotintbases$ consisting only of triangles. We then have $\mathbf{1}^\top c_\cyclebasis = 3 \cyclerank{K_7} = 45.$ Since $K_7$ has 21 arcs, Lemma~\ref{lem:parity-dichotomy-recursive} implies $c_\cyclebasis = \mathbf{2}+ \mathbf{1}_C$ for a triangle $C$. By orbit pruning, we only need to consider $b = \mathbf{2} + \mathbf{1}_C$ for a single triangle $C$. We find the optimal value $z^* = 0$ for \eqref{LP}: The vector $b$ is a convex combination of $21$ coefficient vectors of integral cycle bases. We conclude by Lemma~\ref{lem:domination-certificate} that $K_7 \notin \forbiddenminors$.
\end{example}

\begin{example}\label{ex: K8 is opt-out}

Figure~\ref{fig: smallest graph in X} shows a graph \(\digraph \in \forbiddenminors\) with $8$ vertices and $23$ arcs,
obtained by running Algorithm~\ref{alg:X search} on \(\completegraph{8}\).
Since \(\completegraph{7}\) is \mcbeq, so is every graph on at most seven
vertices (each such graph is a minor of \(\completegraph{7}\), and \mcbeq{}-ness is preserved under taking minors), so \(\digraph\) has the fewest
possible vertices among members of \(\forbiddenminors\). The weights
shown on the arcs of \(\digraph\) in the figure induce a unique non-integral minimum-weight cycle basis  of total weight $2490$, while the minimum-weight integral cycle basis has a weight of $2491$.
\begin{figure}[h!]

    \centering
\begin{tikzpicture}[scale = 2.2,
    every node/.style={circle,draw,minimum size=3mm,inner sep=1pt},
    edge/.style={thick},
    weightlabel/.style={font=\scriptsize, fill=white, inner sep=0.5pt, draw=none, shape=rectangle}]
    \foreach \i in {0,...,7} {
        \coordinate (v\i) at (90-\i*45:1.6);
        \node (\i) at (v\i) {$\i$};
    }
    \draw[edge] (0) -- node[weightlabel,pos=0.22] {56} (2);
    \draw[edge] (0) -- node[weightlabel,pos=0.22] {36} (3);
    \draw[edge] (0) -- node[weightlabel,pos=0.22] {46} (4);
    \draw[edge] (0) -- node[weightlabel,pos=0.22] {50} (6);
    \draw[edge] (0) -- node[weightlabel,pos=0.22] {58} (7);
    \draw[edge] (1) -- node[weightlabel,pos=0.22] {37} (3);
    \draw[edge] (1) -- node[weightlabel,pos=0.22] {50} (4);
    \draw[edge] (1) -- node[weightlabel,pos=0.22] {55} (5);
    \draw[edge] (1) -- node[weightlabel,pos=0.22] {46} (6);
    \draw[edge] (1) -- node[weightlabel,pos=0.22] {57} (7);
    \draw[edge] (2) -- node[weightlabel,pos=0.22] {51} (3);
    \draw[edge] (2) -- node[weightlabel,pos=0.22] {58} (4);
    \draw[edge] (2) -- node[weightlabel,pos=0.22] {60} (5);
    \draw[edge] (2) -- node[weightlabel,pos=0.22] {54} (6);
    \draw[edge] (2) -- node[weightlabel,pos=0.22] {49} (7);
    \draw[edge] (3) -- node[weightlabel,pos=0.22] {50} (5);
    \draw[edge] (3) -- node[weightlabel,pos=0.22] {66} (7);
    \draw[edge] (4) -- node[weightlabel,pos=0.22] {51} (5);
    \draw[edge] (4) -- node[weightlabel,pos=0.22] {50} (6);
    \draw[edge] (4) -- node[weightlabel,pos=0.22] {57} (7);
    \draw[edge] (5) -- node[weightlabel,pos=0.22] {54} (6);
    \draw[edge] (5) -- node[weightlabel,pos=0.22] {53} (7);
    \draw[edge] (6) -- node[weightlabel,pos=0.22] {56} (7);
\end{tikzpicture}

 \caption{Smallest graph in \(\forbiddenminors\) obtained with Alg.~\ref{alg:X search}, together with arc weights inducing a non-integral minimum cycle basis.}\label{fig: smallest graph in X}
\end{figure}
\end{example}

We can now obtain a complete characterization of \mcbeq-ness for complete graphs.

\begin{theorem}\label{thm:complete-characterization}
    The complete graph \(\completegraph{n}\) is \mcbeq\ if and only if \(n\leq 7\).
\end{theorem}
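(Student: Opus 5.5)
The plan is to split the equivalence into two directions and lean on minor-closedness (Theorem~\ref{thm: optin is minor closed}) together with the computational examples already established.

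For the ``if'' direction, we first note that $\completegraph{n}$ is a minor (indeed a subgraph) of $\completegraph{7}$ for every $n \le 7$. So it suffices to know that $\completegraph 7$ is \mcbeq{}, which is the content of Example~\ref{ex: K7 is optin}. The logic runs through Algorithm~\ref{alg:X search} and is certified by Theorem~\ref{thm:algo}:
\begin{itemize}
\item every proper one-arc minor of $\completegraph 7$ is a graph on at most $7$ vertices with fewer arcs, and is recursively shown to be \mcbeq{}, with the base cases supplied by Example~\ref{ex: K5 is optin} and Example~\ref{ex: K6 is optin};
\item the refined domination test (Theorem~\ref{thm:refined-domination}) certifies $\completegraph 7 \notin \forbiddenminors$ via the triangle argument: by Proposition~\ref{prop:structure-summary}, a non-integral optimal basis consists of $15$ triangles, so $\mathbf 1^\top c_\cyclebasis = 45 = 2\cdot 21+3$. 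Lemma~\ref{lem:parity-dichotomy-recursive} then forces $c_\cyclebasis = \mathbf 2 + \mathbf 1_\cycle$ for some triangle $\cycle$, and this vector is dominated by integral bases, so Lemma~\ref{lem:domination-certificate} applies.
\end{itemize}
Hence $\completegraph 7$ is \mcbeq{}, and by Theorem~\ref{thm: optin is minor closed} so is every $\completegraph n$ with $n\le 7$.

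For the ``only if'' direction, we use the graph $\digraph$ of Example~\ref{ex: K8 is opt-out}. It has $8$ vertices and $23$ arcs, so it is a subgraph, hence a minor, of $\completegraph 8$, and therefore of every $\completegraph n$ with $n\ge 8$. The displayed weights yield a unique non-integral minimum cycle basis of weight $2490$ against an integral optimum of $2491$, so $\digraph$ is \notmcbeq{}. By minor-closedness in contrapositive form (if a minor is \notmcbeq{}, so is the graph), every $\completegraph n$ with $n \ge 8$ is \notmcbeq{}. Note that this direction only needs $\digraph$ to be \notmcbeq{}, not its minimality in $\forbiddenminors$.

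The main obstacle is that both directions rest on computer-verified certificates rather than hand proofs. These are the LP certificate that $\mathbf 2+\mathbf 1_\cycle\in\intcone$ for $\completegraph 7$ (a convex combination of $21$ integral bases) and the verification that the weights in Figure~\ref{fig: smallest graph in X} give a unique, non-integral minimum basis. For the proof I would present these as checkable certificates. For the first, I would list the $21$ integral bases with their coefficients, so that domination is checked arc by arc. For the second, I would exhibit the basis and compute its determinant to show it is not $\pm1$, exhibit an integral basis of weight $2491$, and, for optimality, give a de Pina/Horton run or an LP dual. The most delicate point is arguing that \emph{no} integral basis achieves weight $2490$. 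I would handle it through uniqueness of the minimum basis, obtained via the strict greedy weight gaps in Horton's algorithm, rather than by enumerating integral bases.
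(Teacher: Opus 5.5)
Your proposal is correct and follows the paper's proof. $\completegraph{7}$ is \mcbeq{} by the computation in Example~\ref{ex: K7 is optin} (the paper's proof cites Example~\ref{ex: K6 is optin} at this point, evidently a slip), and minor-closedness extends this to all $n\le 7$; the $8$-vertex member of $\forbiddenminors$ from Example~\ref{ex: K8 is opt-out} is a subgraph of $\completegraph{8}$, hence a minor of every $\completegraph{n}$ with $n\ge 8$, and your explicit extension to all $n\ge 8$ and your certificate-presentation plan only spell out what the paper leaves implicit.
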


\begin{proof}
    By Example~\ref{ex: K6 is optin}, \(\completegraph{7}\) is \mcbeq. Since \mcbeq\ graphs are minor-closed, every complete graph \(\completegraph{n}\) with \(n\leq 7\) is \mcbeq.

    On the other hand, Example~\ref{ex: K8 is opt-out} shows that \(\completegraph{8}\) is \notmcbeq\ since it contains a forbidden minor.
\end{proof}

Beyond complete graphs, we have found the following results for generalized Petersen graphs.

\begin{example}
    Using Algorithm \ref{alg:X search}, starting from a search of the generalized Petersen graph \(P_{7,2}\), we found that the graphs in Figure~\ref{fig:forbiddeniminors} are in \(\forbiddenminors\), while \(P_{5,2}\) was found to be \mcbeq. 
    \begin{figure}[h]
        \centering
        \includegraphics[width=1\linewidth]{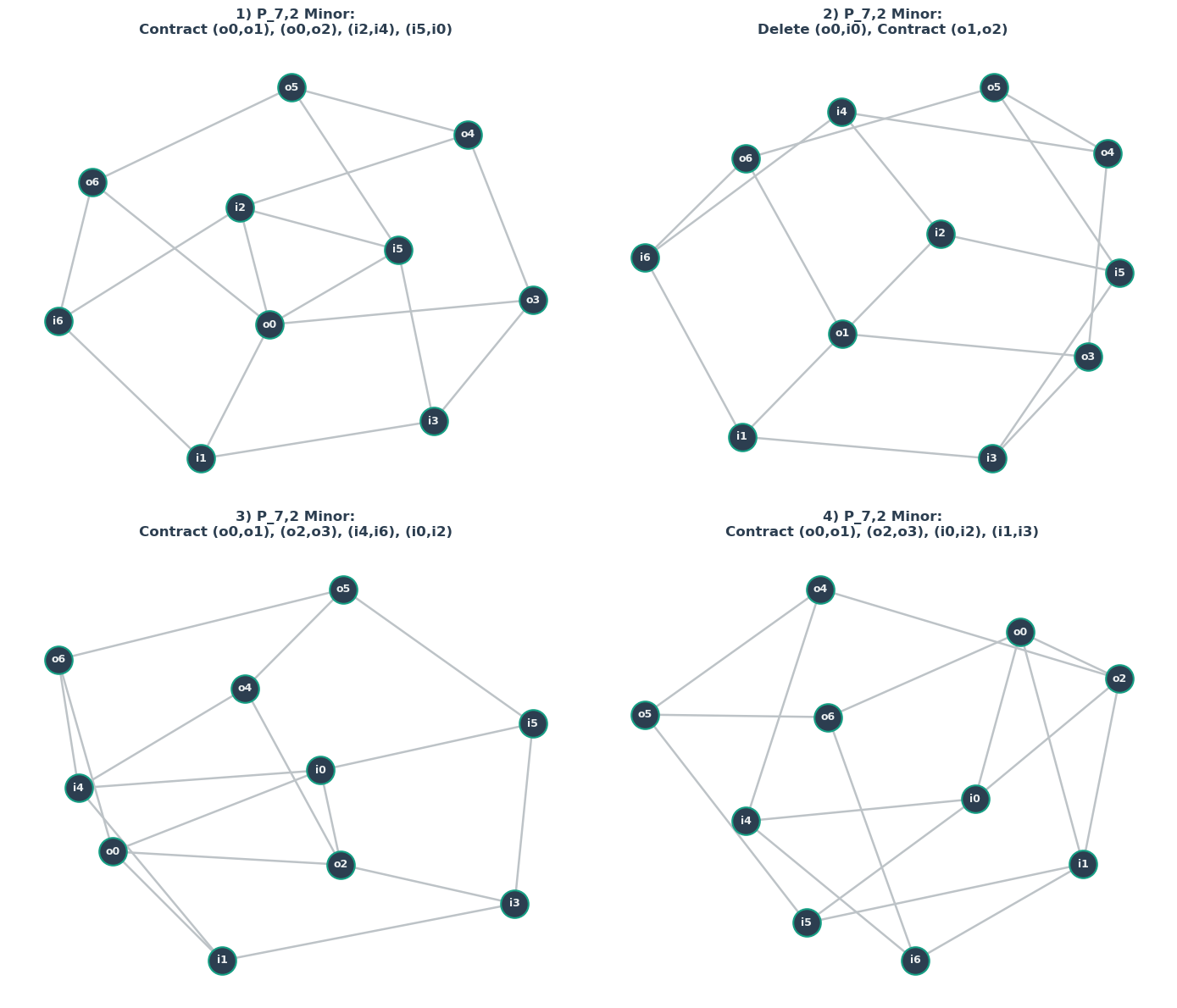}
        \caption{Minors of $P_{7,2}$ contained in $\forbiddenminors$}
        \label{fig:forbiddeniminors}
    \end{figure}
\end{example}

\section{Conclusion and future directions}\label{sec:conclusion}

In the first part, we studied the existence and complexity of forward cycle bases. We characterized when a digraph admits a forward weakly fundamental (equivalently, integral, undirected, or directed) cycle basis, reducing the question to a simple block-structure condition (Proposition~\ref{prop: weakly fundamental forward cycle basis}). For the strictly fundamental case, we showed that a strongly connected digraph admits a forward fundamental cycle basis if and only if the number of directed cycles equals the cycle rank; when it exists, this basis is unique and both its construction and the certification of non-existence run in polynomial time (Theorem~\ref{thm: forward fundamental existence}). Despite this tractability, we showed that the minimum-weight forward weakly fundamental cycle basis problem is APX-hard, via an L-reduction from the non-forward problem with metric weights (Theorem~\ref{thm:APX-hard}); Table~\ref{tab:complexity} summarizes the resulting complexity landscape. The complexity of the minimum-weight \emph{integral} cycle basis problem, forward or not, remains open and motivates the paper's second part.

There, we introduced \emph{\mcbeq{}} graphs, those for which a minimum cycle
basis is integral for every choice of edge weights, and showed this family is
minor-closed (Theorem~\ref{thm: optin is minor closed}), hence
characterized by a finite set \(\forbiddenminors\) of forbidden minors via
the Robertson--Seymour theorem. We developed a recursive algorithm
(Algorithm~\ref{alg:X search}) to test membership in
\(\forbiddenminors\).  We used this algorithm to give a complete
characterization: \(K_n\) is \mcbeq\ if and only if \(n\le7\) (Theorem~\ref{thm:complete-characterization}). A
computational search seeded at the generalized Petersen graph
\(P_{7,2}\) further produced several additional graphs in
\(\forbiddenminors\) (Figure~\ref{fig:forbiddeniminors}).

Several questions remain open. Beyond complete graphs, \(\forbiddenminors\)
is far from fully determined: the search seeded at \(P_{7,2}\) exhibits
several of its members, but no complete description is known, and it
remains open whether \(\forbiddenminors\) admits a useful structural
characterization. Resolving this is the primary open problem raised by
this work and would be a major step toward a better understanding of integral cycle bases.

\paragraph{Acknowledgements.} Niels Lindner: This research was conducted in the MobilityLab of the Research Campus MODAL funded by the German Federal Ministry of Research, Technology and Space (BMFTR), fund number 05M25KEM.

\bibliographystyle{plain}
\bibliography{mainbib}

\end{document}